\newcommand*{\rom}[1]{\expandafter\@slowromancap\romannumeral #1@}
\numberwithin{equation}{section}
\newtheorem{thm}{Theorem}[section]
\newtheorem*{Theorem*}{Theorem}
\newtheorem{lmm}[thm]{Lemma}
\newtheorem{prp}[thm]{Proposition}
\newtheorem{mresult}[thm]{Main Result}
 {\theoremstyle{definition}

\newtheorem{rem}[thm]{Remark}

\newtheorem{defn}[thm]{Definition} }
\begin{document}

\newcommand{\arXivNumber}{2312.10759}

\renewcommand{\PaperNumber}{012}

\FirstPageHeading

\ShortArticleName{Counting Curves with Tangencies}

\ArticleName{Counting Curves with Tangencies}

\Author{Indranil BISWAS~$^{\rm a}$, Apratim CHOUDHURY~$^{\rm b}$, Ritwik MUKHERJEE~$^{\rm c}$\newline and Anantadulal PAUL~$^{\rm d}$}

\AuthorNameForHeading{I.~Biswas, A.~Choudhury, R.~Mukherjee and A.~Paul}

\Address{$^{\rm a)}$~Department of Mathematics, Shiv Nadar University,\\
\hphantom{$^{\rm a)}$}~NH91, Tehsil Dadri, Greater Noida, Uttar Pradesh 201314, India}
\EmailD{\href{indranil.biswas@snu.edu.in}{indranil.biswas@snu.edu.in}, \href{indranil29@gmail.com}{indranil29@gmail.com}}

\Address{$^{\rm b)}$~Institut f{\"u}r Mathematik, Humboldt-Universit{\"a}t zu Berlin,\\
\hphantom{$^{\rm b)}$}~Unter den Linden 6, Berlin 10099, Germany}
\EmailD{\href{mailto:apratim.choudhury@hu-berlin.de}{apratim.choudhury@hu-berlin.de}}

\Address{$^{\rm c)}$~School of Mathematical Sciences, National Institute of Science Education and Research,\\
\hphantom{$^{\rm c)}$}~Bhubaneswar, An OCC of Homi Bhabha National Institute, Khurda 752050, Odisha, India}
\EmailD{\href{mailto:ritwikm@niser.ac.in}{ritwikm@niser.ac.in}}

\Address{$^{\rm d)}$~International Center for Theoretical Sciences, Survey No.~151, Hesaraghatta,\\
\hphantom{$^{\rm d)}$}~Uttarahalli Hobli, Sivakote, Bangalore 560089, India}
\EmailD{\href{mailto:anantadulal.paul@icts.res.in}{anantadulal.paul@icts.res.in}, \href{mailto:paulanantadulal@gmail.com}{paulanantadulal@gmail.com}}

\ArticleDates{Received May 04, 2024, in final form February 07, 2025; Published online February 23, 2025}

\Abstract{Interpreting tangency as a limit of two transverse intersections, we obtain a~concrete formula to enumerate smooth degree $d$ plane curves tangent to a given line at multiple points with arbitrary order of tangency. Extending that idea, we then enumerate curves with one node with multiple tangencies to a given line of any order. Subsequently, we enumerate curves with one cusp, that are tangent to first order to a given line at multiple points. We~also present a new way to enumerate curves with one node; it is interpreted as a degeneration of a curve tangent to a given line. That method is extended to enumerate curves with two nodes, and also curves with one tacnode are enumerated. In the final part of the paper, it is shown how this idea can be applied in the setting of stable maps and perform a concrete computation to enumerate rational curves with first-order tangency. A~large number of low degree cases have been worked out explicitly.}

\Keywords{enumeration of curves; tangency; nodal curve; cusp}

\Classification{14N35; 14J45; 53D45}

\section{Introduction}
A prototypical question in enumerative geometry is as follows: what is the characteristic number of
curves in a linear system that have certain
prescribed singularities and are tangent to a given divisor of various orders at multiple points?
The curves are, of course, required to meet further insertion conditions so that the ultimate answer is a finite
number. For example,
in $\mathbb{P}^2$ there are exactly $2$ conics passing through $4$ generic points that are tangent to a given line, and there
are~36 nodal cubics in $\mathbb{P}^2$ through $7$ generic points tangent to a given line.
These are some special cases of the famous Caporaso--Harris formula~\cite{CH}, which addresses the following question:
How many degree $d$ curves are there in $\mathbb{P}^2$ that pass through
$\mathsf{j}$ generic points, having $\delta$ nodes that are
tangent to a given line at $r$ distinct points,
with the orders of tangency being $k_1, k_2,\dots, k_r$, where
\[ \mathsf{j} := \frac{d(d+3)}{2} -(\delta +(k_1+k_2+ \dots + k_r)). \]
We have defined here the order of tangency to be $k$ if the order of contact is $k+1$ (i.e., the intersection multiplicity of the curve and the line is $k+1$, hence the transversal intersection is of contact order $1$).

The purpose of this paper is to give a new way to think about the question of tangency.
The main idea can be summarized in one picture:
This interpretation of tangency allows us to
effortlessly enumerate smooth curves with multiple tangencies of any order.
With a little more effort, this method can also be applied to enumerate
$1$-nodal curves with tangencies (henceforth, curves with $\delta$ nodes are referred
to as $\delta$-nodal curves). We then go on to show that the method
can also be applied to enumerate $1$-cuspidal curves (i.e., curves with one cusp)
with first-order tangencies at multiple points.
We note that the Caporaso--Harris formula counts curves with only nodal singularities.

Our idea can also be applied to
study the question of enumerating stable maps tangent to a given divisor.
The difficulty of applying this idea in the context of stable maps
has been discussed by Gathmann in his paper \cite[p.~41]{Gath_blow_up}.
Subsequently, the idea has also been discussed in the more recent paper by Dusa McDuff and Kyler Siegel
\cite[pp.~1179--1180]{MK_published}.
The discussion in \cite{MK_published} illustrates that for applying this idea in
the context of stable maps to enumerate curves with tangencies, we need to compute the characteristic number
of curves with an $m$-fold point; \cite{AiM_m_fold_pt}~precisely does the latter. Hence, using the
results of \cite{AiM_m_fold_pt} and by interpreting tangency as a~limit of points lying on a line (as
illustrated by Figure~\ref{kk_pic}), it is possible to enumerate stable maps with tangencies.
In Section \ref{count_st_mp_tang}, a very concrete computation based on this idea is worked out.\looseness=-1

\begin{figure}[h]\centering
\includegraphics[scale = .85]{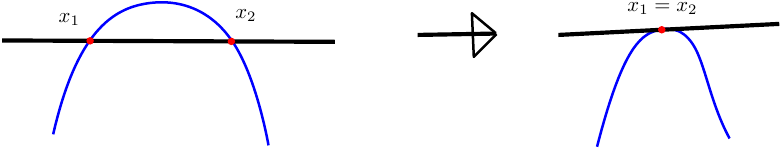}

\caption{Tangency as a limit of transverse intersection.} \label{kk_pic}
\end{figure}

Finally, in Section \ref{Rel_GW_WDVV} this idea is pursued again, but with a difference. We make the points in the
\textit{domain} come together, not in the \textit{image}. This directly gives us rational curves with tangencies.
The idea is implemented using
the equality
of divisors in $\overline{M}_{0,4}$.
In this way, we are able to count rational curves with first-order tangencies
without making use of the psi classes in any way;
only a knowledge of primary Gromov--Witten invariants
is required.

\section{Main results}
\label{mr_s}
We now state the main results of this paper.
Let $k_1, k_2, \dots, k_n$ be positive integers
and $d$ be an integer greater than or equal to
$k+n$,
where $k:= k_1 +\dots + k_n$.
Define
$
\mathsf{N}_d(\mathsf{T}_{k_1} \dots \mathsf{T}_{k_n})
$
to be the number of degree $d$ curves in $\mathbb{P}^2$, passing through the requisite number
of generic points, and that are tangent to a given line at $n$ distinct points with orders $k_1, k_2, \dots, k_n$. Note that aside from these $n$ points of contact,
the curve also intersects the line transversally at~${d-(k+n)}$ further points.
Define
\[
\mathsf{N}_d(\mathsf{A}_1 \mathsf{T}_{k_1} \dots \mathsf{T}_{k_n}) \qquad \textnormal{and} \qquad
\mathsf{N}_d(\mathsf{A}_2 \mathsf{T}_{k_1} \dots \mathsf{T}_{k_n})
\]
to be the number of $1$-nodal (respectively $1$-cuspidal)
degree $d$ curves in $\mathbb{P}^2$, passing through the requisite number
of generic points and that are tangent to a given line at $n$ distinct points with orders $k_1, k_2,\dots, k_n$.
The first main result of this paper is as follows.
\begin{mresult}
\label{m_rslt1}
By interpreting tangency as a limit of transverse intersections,
we are able to compute
\[\mathsf{N}_d(\mathsf{T}_{k_1} \dots \mathsf{T}_{k_n}),
\qquad \mathsf{N}_d(\mathsf{A}_1 \mathsf{T}_{k_1} \dots \mathsf{T}_{k_n})
\qquad \textnormal{and} \qquad
\mathsf{N}_d(\mathsf{A}_2 \underbrace{\mathsf{T}_{1} \dots \mathsf{T}_{1}}_{n\text{-times}}),\]
provided $d>n+k-1$, $d>n+k$ and $d>2n+2$, respectively.
\end{mresult}
Denote by $\mathsf{N}_d(\mathsf{A}_1)$ (respectively, $\mathsf{N}_d(\mathsf{A}_3)$)
the number of $1$-nodal (respectively, $1$-tacnodal)
plane degree $d$ curves, passing through the right number of generic points.
Also, denote $\mathsf{N}_d(\mathsf{A}_1 \mathsf{A}_1)$ by the number
of $2$-nodal
plane degree $d$ curves, passing through the right number of generic points.
Our next result is as follows.

\begin{mresult}
\label{m_rslt2}
By interpreting a nodal point as a degeneration of a first-order tangency,
we obtain a new method to compute
$\mathsf{N}_d(\mathsf{A}_1) $ and $
\mathsf{N}_d(\mathsf{A}_1\mathsf{A}_1)$,
provided $d>1$ and $d>3$, respectively.
Furthermore, by interpreting a tacnode as a limit of two nodal points,
we obtain a new method to compute $\mathsf{N}_d(\mathsf{A}_3)$, provided $d>3$.
\end{mresult}
To see what we mean by a node arising as a degeneration
of a tangency point, consider the polynomial $f_t(x,y):= ty + y^2 - x^2$.
We note that the zero set of this curve is tangent to the $x$-axis at the origin.
The origin is also a smooth point of the curve when $t\neq 0$. However,
in the limit when $t$
goes to zero, we see that this becomes a nodal curve.

Our final result is on counting stable maps (modulo automorphisms of the domain).
Let~$\mathsf{N}_d^{\mathsf{T}_1}$ denote the number of
rational degree $d$ stable maps (modulo automorphisms) passing through $3d-2$
generic points and
that are tangent to a given line (to first order).
Our third result is as follows.\looseness=-1

\begin{mresult}\label{m_rslt3}
By interpreting tangency as a limit of transverse intersections, we obtain two
new ways to compute the number $\mathsf{N}_d^{\mathsf{T}_1}$. The first method
consists of
moving the two points on the images of the stable maps, while
the second method consists of moving the two points
in the domain of the stable map.
\end{mresult}

The bound we impose on $d$ for Main Results \ref{m_rslt1} and \ref{m_rslt2}
is not optimal. It is a sufficient condition for our formula
to be valid, it is not necessary. The bound is imposed to prove
transversality, which involves constructing curves satisfying certain conditions.
If $d$ is sufficiently large, then one can easily construct such curves.
In Section~\ref{verification}, we do several low degree checks.
We observe in that section that even when $d$ is smaller than what is
required by our result, the values agree with the expected values.

We have written a \textsc{Mathematica} program
to implement the formulas in Sections \ref{smooth_tangencies}, \ref{nodal_tang}, \ref{Cuspidal_tang},
\ref{singularity_in_our_format} and~\ref{count_st_mp_tang}.
Also a \textsc{Python} program is written to implement the Caporaso--Harris formula.\footnote{The programs
are available at \url{https://www.sites.google.com/site/ritwik371/home}.
The reader is invited to use the program and verify the assertions.}

\section{Comparison with other methods}

Let us compare our method with the Caporaso--Harris formula.
The central idea of the Capora\-so--Harris formula is a degeneration argument,
which can briefly be described as follows.
They argue that the
number of degree $d$ plane curves with prescribed tangencies
and passing through~$\mathsf{j}$ generic points is equal to
the number of plane degree $d$ curves with the same prescribed tangencies
passing through~$\mathsf{j}-1$ generic points and also
intersecting the line transversally at one more fixed point,
plus an excess contribution.
The excess contribution comes from
\begin{itemize}\itemsep=0pt
\item Degree $d$ curves with the same prescribed tangencies, but where one of the free tangency
points gets replaced by a fixed tangency point.
\item Curves of lower degree with possibly higher order tangencies.
\end{itemize}
The total excess contribution is a sum over all these terms.

This will be explained more precisely by working out the following
question: How many degree $d$ curves
are there in $\mathbb{P}^2$, that pass through $\delta_d-1$ generic points and are
tangent to a~given line, where $\delta_d := \frac{d(d+3)}{2}$.
After we explain how Caporaso and Harris solve this problem, we then
explain how we solve this question in this paper.
This one example clearly illustrates
the difference between the two methods.

We make a few definitions.
First of all, define
$\mathsf{N}_d(\mathsf{T}_1, m)$ to be the number of
degree $d$ curves, passing through $\delta_d-1-m$ generic points
that are tangent to first order to a given line at an \emph{unspecified} point
and intersecting the line transversally at $m$ \emph{fixed} points.
Notice that requiring the curve to intersect the line at a
\emph{fixed} point is what imposes a genuine constraint.
Requiring the curve to intersect the line transversally at some \emph{unspecified}
point would not impose any condition (because any curve does intersect a line somewhere).
Our goal is to compute $\mathsf{N}_d(\mathsf{T}_1, 0)$
(which we often abbreviate as $\mathsf{N}_d(\mathsf{T}_1)$).

Next, we define
$\mathsf{N}_d\bigl(\mathsf{T}_1^{\textnormal{pt}}, m\bigr)$ to be the
number of
degree $d$ curves, passing through $\delta_d-2-m$ generic points
that are tangent to first order to a given line at a
fixed point and that also intersects the line at $m$ fixed points.
As before, we often abbreviate
$\mathsf{N}_d\bigl(\mathsf{T}_1^{\textnormal{pt}}, 0\bigr)$ as
$\mathsf{N}_d\bigl(\mathsf{T}_1^{\textnormal{pt}}\bigr)$.
Also, define $\mathsf{N}_d(\mathsf{S})$ to be the number of
degree $d$ curves passing through $\delta_d$ generic
points. Of course, this number is equal to one, but never the less
denote it by the symbol $\mathsf{N}_d(\mathsf{S})$, which make the geometric
ideas behind the subsequent formulas more transparent.
A special case of the Caporaso--Harris formula \cite[p.~348, Theorem 1.1]{CH}
is as follows:
\begin{gather}
\mathsf{N}_d(\mathsf{T}_1, m)  =
2 \mathsf{N}_d\bigl(\mathsf{T}_1^{\textnormal{pt}}, m\bigr) +
\mathsf{N}_d(\mathsf{T}_1, m+1) \qquad \forall  d \geq m+2 \qquad \textnormal{and} \label{CH1}\\
\mathsf{N}_d\bigl(\mathsf{T}_1^{\textnormal{pt}}, m\bigr) =
\mathsf{N}_d\bigl(\mathsf{T}_1^{\textnormal{pt}}, m+1\bigr)  \qquad \forall  d \geq m+3.
\label{CH2}
\end{gather}
Furthermore,
\begin{align}
\mathsf{N}_d\bigl(\mathsf{T}_1^{\textnormal{pt}}, d-2\bigr) & = \mathsf{N}_{d-1}(\mathsf{S})
\qquad \forall  d \geq 2. \label{CH3}
\end{align}
Using the fact that $\mathsf{N}_d(\mathsf{S})=1$, equations \eqref{CH1}, \eqref{CH2}
and \eqref{CH3}
recursively give us the value of~$\mathsf{N}_d(\mathsf{T}_1)$ for all $d\geq 2$.

This recursive formula will be analysed.
The structure of the formula is as follows: one gets
a recursive formula for $\mathsf{N}_d(\mathsf{T}_1)$ in terms of
$\mathsf{N}_{d-1}(\mathsf{T}_1)$. Furthermore, the underlying geometric principle is as follows:
the point constraints that are imposed on the curves (to get a finite number) are one by one moved to
the line and the corresponding numerical invariants are compared. As we keep moving the points
on the line, the curve is forced to break into a~reducible curve (one of whose components is the line)
and a~curve of lower degree. Unwinding this geometric phenomenon in terms of numbers ultimately
gives us equations \eqref{CH1}, \eqref{CH2} and~\eqref{CH3}.

Next, we explain how Caporaso--Harris enumerate curves with one node.
We make a few definitions first. Define
$\mathsf{N}_d(\mathsf{A}_1, m)$ to be the number of
degree $d$ curves with one node, passing through $\delta_d-1-m$ generic points and
$m$ generic points on the line.
Our goal is to compute~$\mathsf{N}_d(\mathsf{A}_1, 0)$
(which as before, is abbreviated as $\mathsf{N}_d(\mathsf{A}_1)$).
With these notations, a special case of the Caporaso--Harris
formula \cite[p.~348, Theorem 1.1]{CH}
is as follows:
\begin{gather}
\mathsf{N}_d(\mathsf{A}_1) =
\mathsf{N}_d(\mathsf{A}_1, m) \qquad \forall  d \geq m+1, \label{CH1_Node}\\
\mathsf{N}_d(\mathsf{A}_1, d-1) = \mathsf{N}_d(\mathsf{A}_1, d) + (d-1)\mathsf{N}_{d-1}(\mathsf{S})
\qquad \textnormal{and} \label{CH2_Node} \\
\mathsf{N}_d(\mathsf{A}_1, d) = \mathsf{N}_{d-1}(\mathsf{A}_1) + d \mathsf{N}_{d-1}(\mathsf{S}) + 2 \mathsf{N}_{d-1}(\mathsf{T}_1).
\label{CH3_Node}
\end{gather}
Equations \eqref{CH1_Node}, \eqref{CH2_Node}
and \eqref{CH3_Node} give us that
\begin{align}
\mathsf{N}_d(\mathsf{A}_1) & =  (2d-1) + \mathsf{N}_{d-1}(\mathsf{A}_1) + 2 \mathsf{N}_{d-1}(\mathsf{T}_1). \label{na1_CH}
\end{align}
Since $\mathsf{N}_{d-1}(\mathsf{T}_1)$ can be computed,
equation \eqref{na1_CH} enables us to compute $\mathsf{N}_{d}(\mathsf{A}_1)$.

Let us again try to understand the underlying geometric reason behind the formulas.
Equation \eqref{CH1_Node} says that the number of $1$-nodal
degree $d$ curves
passing through $\delta_d-1$ generic points
is the same as the number of $1$-nodal degree $d$ curves
through $\delta_d-1-(d-1)$ generic points
and~${d-1}$ points on a line (apply the equation for $m=d-1$, for which it is valid).
Equation~\eqref{CH2_Node} on the other hand says that
once we move another point on the line, the two numbers are no longer the same.
There is an excess contribution from a curve of
degree $d-1$ (because the curve can now break).
The two numbers $\mathsf{N}_d(\mathsf{A}_1, d-1)$ and
$\mathsf{N}_d(\mathsf{A}_1, d)$ are related via equation~\eqref{CH2_Node}.
Finally, equation~\eqref{CH3_Node} gives us a formula for
$\mathsf{N}_d(\mathsf{A}_1, d)$ in terms of counts of
curves of lower degree.

More generally, consider the computation of the number
$\mathsf{N}_d\bigl(\mathsf{A}_1^{\delta}\mathsf{T}_{k_1} \dots \mathsf{T}_{k_n}\bigr)$,
the number of $\delta$-nodal
degree $d$ curves tangent to a given line at $n$ points, with order of tangency being~${k_1, k_2, \dots, k_n}$ (and also passing through the right number of generic points).
The Caporaso--Harris formula give a recursive formula for this number in terms of
$\mathsf{N}_{d^{\prime}}\bigl(\mathsf{A}_1^{\delta^{\prime}}\mathsf{T}_{l_1} \dots \mathsf{T}_{l_m}\bigr)$,
where $d^{\prime} \leq d$ and $\delta^{\prime} \leq \delta$.
The geometric principle based on which
this is obtained is the same: the point constraints are moved to the line and as a result the curve
breaks, giving contributions to the intersection from curves of lower degree.

We now briefly explain our method (details are of course worked out in the subsequent sections).
Consider the question of computing
$\mathsf{N}_d(\mathsf{T}_1)$. We view this as an intersection number in the ambient space
$\mathcal{D}_1 \times \mathcal{D}_d \times \mathbb{P}^2 \times \mathbb{P}^2$,
where $\mathcal{D}_1$ and $\mathcal{D}_d$ refer to the space of lines
and degree~$d$ curves in $\mathbb{P}^2$ respectively.
On this ambient space, we define the subspace of degree $d$ curves and a line and two
marked points, such that the two marked points lie on the line and the curve.
We find an expression for the homology class represented by the closure of
this space. Now we impose the
condition that the two marked points come together (this is where we use the collision lemma).
That gives us the space of curves tangent to a given line. Finally, we intersect it with the
point constraints that are necessary to get a finite number. That gives us the number~$\mathsf{N}_d(\mathsf{T}_1)$.

Next, consider the question of computing
$\mathsf{N}_d(\mathsf{A}_1)$. The underlying idea is as follows. We view this
as an intersection number on
$\mathcal{D}_1 \times \mathcal{D}_d \times \mathbb{P}^2$.
On this ambient space, we define the subspace of degree $d$ curves and a line and one
marked point, such that the curve is tangent to the line at the marked point.
On the closure of this space, impose the condition that the directional
derivative of the polynomial defining the curve in the normal direction to
the line vanishes. That gives us the subspace of line, a curve and a
marked point, such that the marked point is a nodal point of the curve
and it lies on the line. In order to compute
$\mathsf{N}_d(\mathsf{A}_1)$, we simply make the curve pass through $\delta_d-1$
generic points, and make the line pass through one point (the second point through
which the line will pass is the nodal point of the curve).

We now make a few remarks. First and foremost, the underlying geometric mechanism
that governs our formula is completely different from what Caporaso--Harris are doing.
Our formula for $\mathsf{N}_d(\mathsf{T}_1)$
is governed by the principle that when two transverse points coincide, they
become a point of tangency.
A generalization of that principle is used in the computation of
$\mathsf{N}_d(\mathsf{T}_{k_1} \dots \mathsf{T}_{k_n})$.
The Caporaso--Harris formula is based on the principle that when
the point constraints of an enumerative problem are repeatedly
moved to a divisor (a line in this case), the
curve eventually has to break.

The second point is as follows: our formula is not a recursion on $d$.
We directly get a formula in terms of $d$. Our formula does involve recursion on other
quantities, such as the number of points of tangency and the orders of tangency,
but not the degree of the curve.
It is therefore completely straightforward to see that
our formula produces a polynomial in $d$ (this is
explained clearly at the end of Section \ref{smooth_curves_enum}).
It is however far from obvious that a recursion on $d$ produce a~polynomial in $d$.
To see why that is so, write down the
full Caporaso--Harris formula \cite[p.~348, Theorem 1.1]{CH}
\begin{align}
N^{d, \delta}(\alpha, \beta)={}& \sum_{k:\beta_k>0} k N^{d, \delta}(\alpha+e_k, \beta-e_k) \nonumber \\
              & + \sum I^{\beta-\beta^{\prime}} \binom{\alpha}{\alpha^{\prime}}
                \binom{\beta^{\prime}}{\beta} N^{d-1, \delta^{\prime}}
                (\alpha^{\prime}, \beta^{\prime}),
                \label{CH_rewrite}
\end{align}
where the second sum is taken over all $\alpha^{\prime}$, $\beta^{\prime}$
and $\delta^{\prime}$, such that
\[\alpha^{\prime} \leq \alpha, \qquad
\beta \leq \beta^{\prime}, \qquad
\delta^{\prime} \leq \delta \qquad \textnormal{and}
\qquad \delta-\delta^{\prime}+ |\beta-\beta^{\prime}| = d-1.\]
The reader can refer to \cite{CH} for the
relevant notation; for the purpose of this discussion only the structure of the
formula is important.
From the structure of the formula, we can see that it is far from clear that
the characteristic numbers
$N^{d, \delta}(\alpha, \beta)$ are polynomials in $d$ is because
they are expressed recursively in terms of characteristic numbers of lower degree.
Seeing the formula~\eqref{CH_rewrite}, we can not rule out the possibility that
the characteristic numbers
behave as factorial or exponential functions of $d$.
In light of
G{\"o}ttsche like conjectures (which broadly speaking says that the solutions to enumerative problems
involving degree $d$ curves is given by universal polynomials), this is perhaps an
interesting point.

The final difference we would like to mention is that we enumerate curves with cuspidal
singularity that are tangent to a line at multiple points. As of now, Caporaso--Harris
has only been worked out for nodal curves with tangencies.
However, in our paper \cite{IB_RM_AC_ND_arxiv}, we have shown how to extend the
idea of Caporaso--Harris to enumerate curves with one cusp.

Finally, we mention another approach to enumerate curves with tangencies,
which is taken by the fourth author in
\cite{Anant-Thesis,PAUL2024103418}.
Again consider the computation of $\mathsf{N}_d(\mathsf{T}_1)$.
Instead of viewing tangency as a limit of two transverse intersections,
only one point is considered. The geometric
condition imposed here is that the
directional derivative of the function, defining the
curve, along the direction of the line is zero. This can be suitably interpreted as the
Euler class of a~bundle.
Using this approach, the author has been able to compute the
characteristic number of singular curves, tangent to first order to a given line at one point.

\section{A few remarks on intersection theory}

In this section, we summarize a few facts about intersection theory.
The ambient space $\mathsf{M}$
inside which intersection theory is done will be a product
of projective spaces. Whenever we talk about an open set,
we mean open set with the analytic topology of the projective space;
we are not talking about the Zariski topology.

Let $\alpha$ and $\beta$ be two homology classes
in $\mathsf{M}$, i.e., $\alpha, \beta \in H_*(\mathsf{M}, \mathbb{R})$.
We define $\alpha\cdot \beta$ the \emph{topological intersection} of
$\alpha$ and $\beta$ to be the unique homology class whose Poincar\'{e} dual is the
cup product of the Poincar\'{e} duals of $\alpha$ and $\beta$.

We now follow a standard abuse of notation.
Given a homology class $\alpha$, we denote its Poincar\'{e} dual (a cohomology class)
by the same letter $\alpha$.
Hence, from the point of view of cohomology,
topological intersection is simply the cup product.

The homology classes that we encounter are going to be obtained
by taking the closure of certain algebraic varieties. More precisely,
our situation is as follows: we have a set $\mathsf{S}$, which is
a smooth complex submanifold of the ambient space $\mathsf{M}$, but it is
not closed. The closure of $\mathsf{S}$ (inside $\mathsf{M}$)
need not be smooth. Nevertheless, $\overline{\mathsf{S}}$ does define a homology class.
The reason for this is as follows. The singularities of the closure are of
complex codimension one and hence, of real codimension two. Hence, by Stokes theorem
for analytic varieties
\cite[p.~33]{GH3}
integration over $\overline{\mathsf{S}}$ makes sense.
Hence, any analytic subvariety of a compact complex manifold $\mathsf{M}$, always
defines a homology class in $H_*(\mathsf{M}, \mathbb{R})$ as explained in
\cite[pp.~33 and 61]{GH3}. We often refer to~$\mathsf{S}$ as the
\emph{open part} of the cycle $\overline{\mathsf{S}}$.

We often be dealing with zero sets of sections of certain complex vector bundles.
It is a~standard fact that when the section is transverse to zero, the cycle represented
by the zero set is Poincar\'{e} dual to the top \emph{Chern class} of the vector bundle.
The top Chern class is also referred to as the \emph{Euler class} of the bundle; we
usually use the latter terminology.

\section{The collision lemma}
\label{coll_lemm}

This section contains the central lemma that we will be using throughout the paper, which is
be referred to as the collision lemma. Before stating the lemma, we need to introduce a few notations.

Let $\mathcal{D}_1$ be the space of lines in $\mathbb{P}^2$; it is the dual projective space $\check{\mathbb{P}}^2$. Define
$
M := \mathcal{D}_1 \times X_{1} \times X_{2}$,
where $X_i$ is a copy of $\mathbb{P}^2$ for $i = 1, 2$.
The pullback to $M$ of the hyperplane classes in
$X_{1}$, $X_{2}$ and $\mathcal{D}_1$ are denoted by $a_1$, $a_2$ and $y_1$, respectively. Define
\begin{align*}
X := \{ (H, q_1, q_2) \in M  \mid  q_1, q_2 \in H\}
\qquad \text{and}
\qquad
Y := \{ (H, q_1, q_2) \in X  \mid  q_1= q_2\}.
\end{align*}
\begin{lmm}[{collision lemma}]\label{CL_ver2}
The cohomology class $(a_1+a_2-y_1)$ restricted to $X$ is equal to
the Poincar\'{e} dual of $Y$ in $X$.
\end{lmm}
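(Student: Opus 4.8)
The plan is to realize $Y$ as the transverse zero locus of a holomorphic section of a line bundle $L$ on $X$ whose first Chern class is $(a_1+a_2-y_1)|_X$, and then to invoke the fact recalled above that the cycle of a transverse zero locus of a section of a vector bundle is Poincar\'e dual to its Euler class (here simply the first Chern class). First I would note that $X$ is smooth: fibering over $\mathcal{D}_1$ it is a locally trivial $\mathbb{P}^1\times\mathbb{P}^1$-bundle, the fiber over $H$ being $H\times H$, and $Y$ is the relative diagonal, a smooth subvariety of codimension one. So the expected target class does have the right degree.

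Next I would introduce tautological data. Write a line as $H=[\ell]$ with $\ell=(\ell_0,\ell_1,\ell_2)\in\mathcal{D}_1$ and the marked points as $q_1=[\mathbf{u}]$, $q_2=[\mathbf{w}]$, where, under the standard identification $(\mathbb{C}^3)^\ast\cong\mathbb{C}^3$, the tuples $\mathbf{u}$, $\mathbf{w}$ and $\ell$ are the tautological $\mathbb{C}^3$-valued sections of $\mathcal{O}_{X_1}(1)$, $\mathcal{O}_{X_2}(1)$ and $\mathcal{O}_{\mathcal{D}_1}(1)$ respectively. On $X$ the defining relations are exactly $\ell\cdot\mathbf{u}=0$ and $\ell\cdot\mathbf{w}=0$, where $\cdot$ denotes the standard bilinear form on $\mathbb{C}^3$.

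The key observation is that on $X$ the cross product $\mathbf{u}\times\mathbf{w}\in\mathbb{C}^3$ is always proportional to $\ell$. Indeed $\mathbf{u}\times\mathbf{w}$ is perpendicular to both $\mathbf{u}$ and $\mathbf{w}$, and by the incidence relations so is $\ell$; since the bilinear form is nondegenerate, the space of vectors perpendicular to $\langle\mathbf{u},\mathbf{w}\rangle$ is one-dimensional whenever $\mathbf{u},\mathbf{w}$ are independent, so $\mathbf{u}\times\mathbf{w}$ and $\ell$ span the same line, while if $\mathbf{u},\mathbf{w}$ are dependent then $\mathbf{u}\times\mathbf{w}=0\in\langle\ell\rangle$ as well. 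The image line $\langle\ell\rangle\subset\mathbb{C}^3\otimes\mathcal{O}$ is the tautological bundle $\mathcal{O}_{\mathcal{D}_1}(-1)$, so the bilinear expression $\mathbf{u}\times\mathbf{w}$ defines a holomorphic section $\sigma$ of the line bundle $L:=\bigl(\mathcal{O}_{X_1}(1)\otimes\mathcal{O}_{X_2}(1)\otimes\mathcal{O}_{\mathcal{D}_1}(-1)\bigr)|_X$, whose first Chern class is precisely $(a_1+a_2-y_1)|_X$. By construction $\sigma$ vanishes exactly where $\mathbf{u}$ and $\mathbf{w}$ are proportional, that is, exactly on $Y$.

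The remaining step, which is where the most care is needed, is transversality. Working in an affine chart where $H$ is near $\{x_2=0\}$, so that $\ell=(\ell_0,\ell_1,1)$, and writing $q_1=(1,s,-\ell_0-\ell_1 s)$ and $q_2=(1,t,-\ell_0-\ell_1 t)$, one checks that $(\ell_0,\ell_1,s,t)$ are local coordinates on $X$ and computes directly that $\mathbf{u}\times\mathbf{w}=(t-s)\,\ell$. Trivializing $L$ near this point by the nonvanishing section $\ell$, the section $\sigma$ becomes the coordinate function $t-s$, whose differential is nonzero and which cuts out $Y=\{s=t\}$. This simultaneously confirms the two points one must not gloss over: that $\mathbf{u}\times\mathbf{w}$ genuinely lands in the sub-line-bundle $\langle\ell\rangle$ as a holomorphic, not merely set-theoretic, section, and that it vanishes to first order along $Y$. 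Granting this, $\sigma$ is transverse to the zero section with zero locus $Y$, whence $\mathrm{PD}_X(Y)=c_1(L)=(a_1+a_2-y_1)|_X$, as claimed.
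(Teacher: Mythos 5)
Your proof is correct, and the key computation checks out: with $\ell=(\ell_0,\ell_1,1)$, $\mathbf{u}=(1,s,-\ell_0-\ell_1 s)$ and $\mathbf{w}=(1,t,-\ell_0-\ell_1 t)$ one does get $\mathbf{u}\times\mathbf{w}=(t-s)\,\ell$, so your section $\sigma$ of $L=\bigl(\mathcal{O}_{X_1}(1)\otimes\mathcal{O}_{X_2}(1)\otimes\mathcal{O}_{\mathcal{D}_1}(-1)\bigr)\big|_{X}$ vanishes transversally along a dense open subset of the irreducible divisor $Y$, which suffices to give $\textnormal{PD}_X[Y]=c_1(L)=(a_1+a_2-y_1)|_X$. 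However, your route is genuinely different from the paper's. The paper never produces a line bundle or a section: it first invokes Leray--Hirsch to write $\textnormal{PD}_X[Y]=(Aa_1+Ba_2+Cy_1)|_X$ with undetermined coefficients, then computes the classes of $X$ and $Y$ in the ambient product $M$, namely $[X]=(y_1+a_1)(y_1+a_2)$ and $[Y]=(y_1+a_1)\bigl(a_1^2+a_1a_2+a_2^2\bigr)$ (the latter via the standard diagonal class of $\mathbb{P}^2\times\mathbb{P}^2$), and finally pins down $A=B=1$, $C=-1$ by pairing against the test classes $\mu=a_1a_2^2$, $a_2a_1^2$, $a_2y_1^2$. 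That method of undetermined coefficients is mechanical and needs only elementary enumerative input (counting lines through points), but it works in $M$ rather than on $X$ and depends on knowing generators of $H^2(X)$. Your construction is more intrinsic: it identifies the actual line bundle $\mathcal{O}_X(Y)$ together with a canonical defining section (the cross product landing in $\langle\ell\rangle$), so the Poincar\'e dual is read off directly on $X$ with no reference to the ambient product, and you get the scheme-theoretic multiplicity-one statement for free. Both arguments are complete and yield the same class.
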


\begin{rem}
 Let $\Delta_{1, 2}^{\mathsf{L}}$
be the following divisor class on $M$
\begin{align}
\Delta_{1, 2}^{\mathsf{L}}& := a_1 + a_2-y_1. \label{Line_Diag_Defn}
\end{align}
The geometric content of the collision lemma
is that the intersection of $X$ with the class $\Delta_{1, 2}^{\mathsf{L}}$ is equivalent to the class
obtained by making the two points come together in $X$. \end{rem}

\begin{proof} First of all, we note that the cohomology of $X$
is generated by $a_1$, $a_2$ and $y_1$; this follows from the Leray--Hirsch theorem.
Since $Y$ is a codimension one submanifold of $X$, we conclude that the
Poincar\'{e} dual of $Y$ in $X$ is given by
\begin{align}
\textnormal{PD}_{X}[Y] = (A a_1 + B a_2 + C y_1)|_{X}, \label{k1}
\end{align}
for some numbers $A$, $B$ and $C$ that are to be determined.
To prove the lemma, it suffices to show that $A = 1$, $ B = 1$ and $C = -1$.
This, we show by computing three different intersection numbers.

First of all, we note that the Poincar\'{e} dual of $X$ in $M$ is given by
\begin{align}
\textnormal{PD}_{M}[X]& = (y_1+a_1)\cdot(y_1+a_2). \label{k2}
\end{align}
To see why this is so, first define $Z_1$ as
$
Z_1 := \{ (H, q_1) \in \mathcal{D}_1 \times X_1  \mid  q_1 \in H\}$.
To prove \eqref{k2}, it suffices to show that $[Z_1] = (y_1 + a_1)$.
To justify this, note that
$[Z_1] = n y_1 + m a_1$ for some numbers $n$ and $m$. Next, we note that
$n = [Z_1]\cdot y_1 a_1^2$ and $m = [Z_1]\cdot y_1^2 a_1$.
Geometrically, $[Z_1]\cdot y_1^2 a_1$ is the number of lines passing through
two points \big(which corresponds to the factor~$y_1^2$\big) and a marked point on the line
that intersects another generic line (which corresponds to the factor $a_1$). This is
clearly $1$. Similarly, $[Z_1]\cdot y_1 a_1^2$ is geometrically the number of lines
through two points (the first point corresponds to the factor $a_1^2$ and the second point
corresponds to the factor $y_1$). This number is also equal to $1$. Hence,
$[Z_1]= y_1 + a_1$, thereby proving \eqref{k2}.

Next, we note that $Y$ can also be described as a codimension three submanifold of $M$
in the following way
\begin{align*}
Y:= \{ (H, q_1, q_2) \in M\mid q_1 \in H,\,  q_1 = q_2\}.
\end{align*}
Hence, the Poincar\'{e} dual of $Y$ in $M$ is given by
$
\textnormal{PD}_{M}[Y] = (y_1+a_1)\cdot [\Delta_{12}]$,
where $\Delta_{12}$ is the subspace of points $(H_1, q_1, q_2)$ in $M$, such that
$q_1 = q_2$.
Using the standard fact that the class of the diagonal
$[\Delta_{12}]$ is equal to $\big(a_1^2 + a_1 a_2 + a_2^2\big)$,
we conclude that
\begin{align}
\textnormal{PD}_{M}[Y]& = (y_1+a_1)\cdot \big(a_1^2 + a_1 a_2 + a_2^2\big). \label{k3}
\end{align}
Let $\mu$ be a class in $M$ of degree three. By equations \eqref{k1}, \eqref{k2} and \eqref{k3},
we conclude that
\begin{align*}
(y_1+a_1)\cdot (a_1^2 + a_1 a_2 + a_2^2) \cdot \mu & = (A a_1 + B a_2 + C y_1)\cdot (y_1+a_1)\cdot(y_1+a_2) \cdot \mu.
\end{align*}
By suitably choosing $\mu$, we can determine $A$, $B$ and $C$.
Choosing $\mu := a_1 a_2^2$, $a_2a_1^2$ and $a_2 y_1^2$, we have
$A+C = 0$, $B+C = 0$ and $A = 1$, respectively. This precisely implies that
$A = B = 1$ and $C = -1$.
\end{proof}

\section{Counting smooth curves with multiple tangencies}\label{smooth_tangencies}
\label{smooth_curves_enum}
In this section, we use the collision lemma to derive our Main Result~\ref{m_rslt1}.
Before we get into the details, a brief outline of our method is described.

Let $\mathcal{D}_d$
denote the space of degree $d$ curves in $\mathbb{P}^2$. This is a complex projective space of dimension
\begin{equation}\label{eb}
\delta_d :=  \frac{d(d+3)}{2}.
\end{equation}
We now try to answer the following question:
{\it How many degree $d$ curves are there in $\mathbb{P}^2$ passing through
$\delta_d-1$ generic points and that are tangent to a given line?}

We approach this problem in the following way: First, consider the space of curves with two marked points $x_1$ and $x_2$, such that
both the points lie on the curve and the line. Now
impose the condition that $x_1$ becomes equal to $x_2$; this is precisely where
we use the collision lemma.
Once we impose the condition that the points $x_1$ and $x_2$
have become equal,
the curve becomes tangent to this line.
We now implement this idea precisely.
Let $X_1$ denote a copy of the projective plane. Define
the incidence variety
\begin{equation}\label{f1}
\mathrm{I}_d := \{ (H_d, x_1) \in  \mathcal{D}_d \times X_1 \mid  x_1 \in H_d\}.
\end{equation}
Elements of the incidence variety consists of
degree $d$-curve $H_d$ and a marked point $x_1$ that lies on this curve.
Let $a_1$ and $y_d$ denote the divisor classes on $\mathcal{D}_d \times X_1$ obtained by pulling back the
hyperplane classes on $X_1$ and $\mathcal{D}_d$, respectively. Then we have
\begin{equation}\label{incidence}
[\mathrm{I}_d]  =  y_d + d a_1.
\end{equation}
Indeed, this follows immediately from the fact that the restrictions of \eqref{incidence} to both
$\{{\rm point}\}\times \mathbb{CP}^2$ and $\mathcal{D}_d \times \{{\rm point}\}$ are valid.

Next, we study subspaces of $\mathcal{D}_1 \times \mathcal{D}_d \times X_1$.
First, define
$\mathrm{I}_{\mathsf{L}}$ (respectively, $\mathrm{I}_{\mathcal{C}}$)
to be the subspace of $\mathcal{D}_1 \times \mathcal{D}_d \times X_1$ consisting of all
$(H_1, H_d, x_1)  \in \mathcal{D}_1 \times \mathcal{D}_d \times X_1$
such that $x_1 \in H_1$ (respectively, $x_1 \in H_d$).
Next, define
\begin{equation}
\label{et}
\mathsf{T}_0
\end{equation}
to be the subspace
of $\mathcal{D}_1 \times \mathcal{D}_d \times X_1$ consisting of all
$(H_1, H_d, x_1)  \in \mathcal{D}_1 \times \mathcal{D}_d \times X_1$ such that~$H_1$ and~$H_d$ intersect transversally at $x_1$.
Note that the closure $\overline{\mathsf{T}}_0$ consists of
$(H_1, H_d, x_1)  \in \mathcal{D}_1 \times \mathcal{D}_d \times X_1$
such that $x_1 \in H_1\bigcap H_d$.

For making the notation easier to read, we denote the homology class
represented by the closure by the notation $[\mathsf{T}_0]$
\big(as opposed to more cumbersome $\big[\overline{\mathsf{T}}_0\big]$\big).

\begin{lmm}\label{Theo_tzr}
The class $[\mathsf{T}_0] \in H_{2(\delta_d +2)}(\mathcal{D}_1 \times \mathcal{D}_d \times X_1, {\mathbb R})$
represented by the cycle $\overline{\mathsf{T}}_0 \subset \mathcal{D}_1 \times \mathcal{D}_d \times X_1$
$($see \eqref{eb}, \eqref{et} and \eqref{eb}$)$ is the following:
\begin{equation}\label{T0_cycle_expression_base}
[\mathsf{T}_0]  =  [\mathrm{I}_{\mathsf{L}}] \cdot [\mathrm{I}_{\mathcal{C}}],
\end{equation}
where $[\mathrm{I}_{\mathsf{L}}]$ and $[\mathrm{I}_{\mathcal{C}}]$ are the homology classes of
$\mathrm{I}_{\mathsf{L}}$ and $\mathrm{I}_{\mathcal{C}}$, respectively.
\end{lmm}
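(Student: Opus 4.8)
The plan is to exhibit both $\mathrm{I}_{\mathsf{L}}$ and $\mathrm{I}_{\mathcal{C}}$ as zero loci of tautological sections of line bundles on $\mathcal{D}_1 \times \mathcal{D}_d \times X_1$, and then to show that the two conditions cut out $\overline{\mathsf{T}}_0$ \emph{transversally and simultaneously}, so that the class of the intersection is exactly the product of the two divisor classes. Let $\mathsf{L}_{\mathsf{L}}$ be the line bundle whose tautological section $s_{\mathsf{L}}$ sends $(H_1, H_d, x_1)$ to the value $\ell_{H_1}(x_1)$ of the defining linear form of $H_1$ at $x_1$; its zero locus is precisely $\mathrm{I}_{\mathsf{L}}$. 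Similarly, let $\mathsf{L}_{\mathcal{C}}$ carry the section $s_{\mathcal{C}}$ evaluating the degree $d$ form $F_{H_d}$ of $H_d$ at $x_1$, with zero locus $\mathrm{I}_{\mathcal{C}}$; note that $\mathrm{I}_{\mathcal{C}}$ is the pullback of the incidence variety $\mathrm{I}_d$ of \eqref{f1}, whose class was found in \eqref{incidence}. Each section is individually transverse to zero: since $x_1 \neq 0$, some monomial is nonzero at $x_1$, so deforming the coefficients of the line (respectively, the curve) changes the evaluation nontrivially. Hence, by the standard fact recalled above (the zero locus of a section transverse to zero is Poincar\'{e} dual to the Euler class), $[\mathrm{I}_{\mathsf{L}}] = e(\mathsf{L}_{\mathsf{L}})$ and $[\mathrm{I}_{\mathcal{C}}] = e(\mathsf{L}_{\mathcal{C}})$.

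Next I would record the elementary set-theoretic identity
\[
\overline{\mathsf{T}}_0 = \{(H_1, H_d, x_1) \mid x_1 \in H_1 \cap H_d\} = \mathrm{I}_{\mathsf{L}} \cap \mathrm{I}_{\mathcal{C}},
\]
which is just the description of the closure given above; in particular, $\overline{\mathsf{T}}_0$ is the common zero locus of the combined section $\sigma := (s_{\mathsf{L}}, s_{\mathcal{C}})$ of $\mathsf{L}_{\mathsf{L}} \oplus \mathsf{L}_{\mathcal{C}}$. Once $\sigma$ is shown to be transverse to zero along all of $\overline{\mathsf{T}}_0$, the same Euler-class fact gives
\[
[\mathsf{T}_0] = e(\mathsf{L}_{\mathsf{L}} \oplus \mathsf{L}_{\mathcal{C}}) = e(\mathsf{L}_{\mathsf{L}}) \cdot e(\mathsf{L}_{\mathcal{C}}) = [\mathrm{I}_{\mathsf{L}}] \cdot [\mathrm{I}_{\mathcal{C}}],
\]
which is the assertion of the lemma.

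The one step requiring genuine care --- and the main obstacle --- is the joint transversality of $\sigma$ at \emph{every} point of $\overline{\mathsf{T}}_0$, including the boundary points of $\overline{\mathsf{T}}_0 \setminus \mathsf{T}_0$ where $H_1$ is tangent to, rather than transverse to, $H_d$ at $x_1$. The decisive observation is that $s_{\mathsf{L}}$ involves only the $\mathcal{D}_1$ and $X_1$ coordinates while $s_{\mathcal{C}}$ involves only the $\mathcal{D}_d$ and $X_1$ coordinates. Thus at a zero $p$ one can find a tangent vector $v_1$ (tilting $H_1$ off $x_1$ with $x_1$ and $H_d$ fixed) with $d\sigma(v_1) = (ds_{\mathsf{L}}(v_1), 0) \neq 0$, and a tangent vector $v_d$ (deforming $H_d$ off $x_1$ with $x_1$ and $H_1$ fixed) with $d\sigma(v_d) = (0, ds_{\mathcal{C}}(v_d)) \neq 0$; together these span the fibre $\mathsf{L}_{\mathsf{L}} \oplus \mathsf{L}_{\mathcal{C}}$ at $p$, so $d\sigma$ is surjective there. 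Crucially, this uses nothing about the order of contact of $H_1$ and $H_d$ at $x_1$ --- only that the line's equation and the curve's equation vary in independent parameters --- so transversality holds uniformly on $\overline{\mathsf{T}}_0$ and no tangential stratum contributes excess multiplicity. This establishes $[\mathsf{T}_0] = [\mathrm{I}_{\mathsf{L}}] \cdot [\mathrm{I}_{\mathcal{C}}]$.
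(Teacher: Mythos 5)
Your proof is correct, but it takes a genuinely different route from the paper's. You realise $\overline{\mathsf{T}}_0$ as the transverse zero locus of the combined tautological section $\sigma = (s_{\mathsf{L}}, s_{\mathcal{C}})$ of the rank-two split bundle $\mathsf{L}_{\mathsf{L}} \oplus \mathsf{L}_{\mathcal{C}}$ and conclude by the Euler-class/Whitney argument; the key point, which you handle correctly, is that joint transversality persists at the tangential points of $\overline{\mathsf{T}}_0 \setminus \mathsf{T}_0$ because the two sections depend on independent groups of parameters (the coefficients of the line versus those of the curve), so no stratum of higher contact order carries excess multiplicity. The paper instead works inside $\mathrm{I}_{\mathsf{L}} \cong \mathrm{I}_1 \times \mathcal{D}_d$, observes that $\overline{\mathsf{T}}_0$ is a divisor there, and identifies the line bundle $\mathcal{O}\bigl(\overline{\mathsf{T}}_0\bigr)$ with the restriction of $p^*\mathcal{O}(\mathrm{I}_d)$ by comparing the two on three test subvarieties $S_1$, $S_2$, $S_3$ that detect $H^2(\mathrm{I}_1 \times \mathcal{D}_d, \mathbb{R})$; the product formula then follows by pushing the resulting divisor class forward to the ambient space. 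Your route is more self-contained (it needs no knowledge of the cohomology generators of $\mathrm{I}_1 \times \mathcal{D}_d$), and the transversality computation you perform is essentially the one the paper carries out later, in the $n=1$, $k_1=0$ step of the proof of Proposition \ref{prp_smth_mfld}, to show that $\overline{\mathsf{T}}_0$ is a smooth submanifold --- so your argument in effect front-loads that proposition and derives the class from it. The paper's test-variety method has the mild advantage of determining the class purely by evaluating intersection numbers, without discussing sections or their differentials; both arguments rely on the same unproved set-theoretic fact, stated just before the lemma, that the closure of the transverse locus is the full incidence locus $\{x_1 \in H_1 \cap H_d\}$.
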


\begin{proof}
Consider the divisor $\mathrm{I}_1  \subset \mathcal{D}_1\times X_1$ in \eqref{f1}.
Let
$
f \colon  \mathrm{I}_1\times \mathcal{D}_d  \hookrightarrow  \mathcal{D}_1 \times \mathcal{D}_d \times X_1$
be the natural map defined by $((H_1, x_1), H_d) \longmapsto (H_1, H_d, x_1)$, where
$(H_1, x_1) \in \mathrm{I}_1$ and $H_d \in \mathcal{D}_d$. Let
\begin{equation}\label{f3}
p \colon\ \mathcal{D}_1 \times \mathcal{D}_d \times X_1 \longrightarrow
\mathcal{D}_d \times X_1
\end{equation}
be the natural projection. Note that
$
\overline{\mathsf{T}}_0 \subset \mathrm{I}_1\times \mathcal{D}_d
$
(see \eqref{et}) and $\overline{\mathsf{T}}_0$ is a divisor.

To prove the lemma, it suffices to show the following:
\begin{equation}\label{f4}
f^*p^* {\mathcal O}_{\mathcal{D}_d \times X_1}(\mathrm{I}_d) =
{\mathcal O}_{\mathrm{I}_1\times \mathcal{D}_d}\bigl(\overline{\mathsf{T}}_0\bigr).
\end{equation}

We now describe three subvarieties $S_1$, $S_2$ and $S_3$ of $\mathrm{I}_1\times \mathcal{D}_d$. Fix
a point $z_0 \in \mathrm{I}_1$ and define%
\begin{equation}\label{s1}
S_1 := \{z_0\}\times\mathcal{D}_d \subset \mathrm{I}_1\times \mathcal{D}_d.
\end{equation}
Fix curves $(H_1, H_d) \in \mathcal{D}_1 \times \mathcal{D}_d$ in $X_1$, and we have
\begin{equation}\label{s2}
S_2 := \{(H_1, H_d, x_1) \in \mathcal{D}_1 \times \mathcal{D}_d \times X_1
  \mid  x_1 \in H_1\} \subset \mathrm{I}_1\times \mathcal{D}_d.
\end{equation}
So $S_2$ is identified with the line $H_1$. Fix a point $x_0 \in X_1$ and also an
element $H_d \in \mathcal{D}_d$. Now define
\begin{equation}\label{s3}
S_3 := \{(H, H_d, x_0) \in \mathcal{D}_1 \times \mathcal{D}_d \times X_1  \mid
x_0 \in H\} \subset \mathrm{I}_1\times \mathcal{D}_d.
\end{equation}
Consequently, $S_3$ is identified with the pencil of lines in $X_1$ containing $x_0$.

Given two cohomology classes ${\mathcal L}_1, {\mathcal L}_2  \in H^2(\mathrm{I}_1\times \mathcal{D}_d,
{\mathbb R})$, to show that
${\mathcal L}_1 = {\mathcal L}_2$, it is enough~to prove that
\smash{$
{\mathcal L}_1\big\vert_{S_j} = {\mathcal L}_2\big\vert_{S_j}
$}
for $j = 1, 2, 3$ (see \eqref{s1}, \eqref{s2}, \eqref{s3}).

In view of the above criterion, it is now straightforward to prove \eqref{f4} using \eqref{incidence}.
\end{proof}

Note that using \eqref{incidence}, we can rewrite equation \eqref{T0_cycle_expression_base} as
\begin{equation}\label{T0_cycle_expression_base_ag}
[\mathsf{T}_0]  =  (y_1 + a_1)\cdot(y_d + d a_1),
\end{equation}
where $\cdot$ denotes topological intersection. Generalizing the notion of
$\mathsf{T}_0$, given any nonnegative integer $k$, we define $\mathsf{T}_k$
to be the subspace of $\mathcal{D}_1 \times \mathcal{D}_d\times X_1$
consisting of all points $(H_1, H_d, x_1)$ such that
\begin{itemize}\itemsep=0pt
\item The points $x_1$ is a smooth point of the curve $H_d$.
\item The line $H_1$
intersects the curve $H_d$ at the points $x_1$
with the order of tangency precisely equal to $k$.
\end{itemize}
Notice that as per its definition, $\mathsf{T}_1$ is not a subset of $\mathsf{T}_0$, but a subset of the
closure $\overline{\mathsf{T}}_0$.

Next, generalizing $\mathcal{D}_1 \times \mathcal{D}_d \times X_1$, define
$
\mathsf{M}_n :=  \mathcal{D}_1 \times \mathcal{D}_d \times (X_1\times \cdots\times X_n)$,
where $X_j$, $1 \leq j \leq n$, is a copy of $\mathbb{P}^2$.
Let $\mathsf{S}_1, \mathsf{S}_2, \dots, \mathsf{S}_n$ be subvarieties of
$\mathcal{D}_1 \times \mathcal{D}_d \times \mathbb{P}^2$.
Denote $\mathsf{S}_1 \mathsf{S}_2 \dots \mathsf{S}_n$ by $\mathsf{S}$. Then,
$\mathsf{S} \subset \mathsf{M}_n$
consists of
$(H_1, H_d, x_1,\dots, x_n)$ such that
\begin{itemize}\itemsep=0pt
\item $(H_1, H_d, x_i) \in \mathsf{S}_i$ for all $i=1$ to $n$.
\item The points $x_1,\dots, x_n$ are all distinct.
\end{itemize}
As an example, consider the set $\mathsf{T}_1 \mathsf{T}_2$. This comprises of the set of
curves along with two \textit{distinct} marked points on a line, where the curve is tangent to first order to
the line at the first marked point and is tangent to second order to the line at the
second marked point.
Similarly, $\overline{\mathsf{T}}_1 \overline{\mathsf{T}}_2$~denotes a slightly bigger space, where the curve is
at least as degenerate as being tangent to the line to first order at the first marked
point and is at least
as degenerate as being tangent to the line to second order at the second marked point.
The curve could be tangent to second order at the first marked point. The curve could even have a nodal
point at the first marked point, since these both lie in the closure $\mathsf{T}_1$.
However, the two marked points have to be distinct.
In particular, $\overline{\mathsf{T}}_1 \overline{\mathsf{T}}_2$
is not the set-theoretic intersection of
$\overline{\mathsf{T}}_1$ and $\overline{\mathsf{T}}_2$, since the latter
includes the locus where the two marked points are equal.
Finally, in the space
$\overline{\mathsf{T}_1 \mathsf{T}}_2$, the two marked points need not be distinct; this denotes the
closure of the space $\mathsf{T}_1 \mathsf{T}_2$ and it includes the locus where the two marked points coincide.

We denote the homology class defined by the closure of $\mathsf{S}$
by the notation $\big[\mathsf{S}\big]$ as opposed to the more cumbersome $[\overline{\mathsf{S}}]$; this makes
some of the computations and statements easier to read.

Finally, let
\begin{equation}\label{ep}
\pi \colon\ \mathsf{M}_{n+1} \longrightarrow \mathsf{M}_n
\end{equation}
be the projection that sends
any $(H_1, H_d, x_1, \dots, x_{n}, x_{n+1})$ to $(H_1, H_d, x_1, \dots, x_{n})$. Let
\begin{equation}\label{ep1}
\pi_{n+1} \colon\  \mathsf{M}_{n+1} \longrightarrow \mathsf{M}_1
\end{equation}
be the projection that sends any $(H_1, H_d, x_1, \dots, x_{n}, x_{n+1})$ to
$(H_1, H_d, x_{n+1})$. For any $1 \leq i \leq n$, let
$
\Delta_{i, {n+1}}  \subset  \mathsf{M}_{n+1}
$
be the locus of all $(H_1, H_d, x_1, \dots, x_{n}, x_{n+1})$ such that $x_i = x_{n+1}$.
We are now ready to state the main results to enumerate smooth curves with tangencies.

\begin{thm}\label{theorem_for_many_Tks}
Let $n$ be a positive integer and $k_1, k_2,\dots, k_{n}$ nonnegative integers
with
$
k :=  k_1+k_2+\dots +k_n$.
Then the following equality of elements of $H_{2(\delta_d+2- k)}(\mathsf{M}_{n+1}, {\mathbb R})$ holds:
\begin{equation}\label{many_Tks}
\pi^{\ast}[\mathsf{T}_{k_1}\dots \mathsf{T}_{k_n}]\cdot
\pi_{n+1}^{\ast}[\mathsf{T}_0] =
[\mathsf{T}_{k_1}\dots \mathsf{T}_{k_n} \mathsf{T}_{0}]
 + \sum_{i=1}^{n}(k_i+1) \pi^{\ast}[\mathsf{T}_{k_1}\dots \mathsf{T}_{k_n}] \cdot [\Delta_{i, {n+1}}]
\end{equation}
$($see \eqref{ep}, \eqref{ep1}$)$ provided $d > k+n$.
\end{thm}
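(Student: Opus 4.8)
The plan is to compute the left-hand side as an honest cycle by restricting to the open dense stratum, reading off its irreducible components, and determining the multiplicity with which each occurs. Recall from Lemma~\ref{Theo_tzr} that $\overline{\mathsf{T}}_0 = \mathrm{I}_{\mathsf{L}}\cap \mathrm{I}_{\mathcal{C}}$, so that $\pi_{n+1}^{\ast}\overline{\mathsf{T}}_0$ is exactly the locus in $\mathsf{M}_{n+1}$ where $x_{n+1}\in H_1\cap H_d$, with class $\pi_{n+1}^{\ast}[\mathsf{T}_0]=(y_1+a_{n+1})(y_d+d\,a_{n+1})$, where $a_{n+1}$ is the hyperplane class of $X_{n+1}$. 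Since $\mathsf{M}_{n+1}=\mathsf{M}_n\times X_{n+1}$ and $\pi$ is the projection forgetting $x_{n+1}$, the first factor $\pi^{\ast}[\mathsf{T}_{k_1}\dots\mathsf{T}_{k_n}]$ is represented by $\overline{\mathsf{T}_{k_1}\dots\mathsf{T}_{k_n}}\times X_{n+1}$. Writing $B:=\mathsf{T}_{k_1}\dots\mathsf{T}_{k_n}$ for the open part, over which $\pi^{-1}(B)=B\times X_{n+1}$, the strategy is to describe the intersection cycle over $B$ first and then pass to closures.

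Over a point of $B$ the fibre of $\pi$ is $X_{n+1}$, and there $\pi_{n+1}^{\ast}\overline{\mathsf{T}}_0$ cuts out the zero-dimensional scheme $H_1\cap H_d$. By B\'ezout's theorem its associated $0$-cycle is
\begin{equation*}
[H_1\cap H_d]=\sum_{i=1}^{n}(k_i+1)[x_i]+\sum_{j}[p_j],
\end{equation*}
where the $p_j$ are the $d-(k+n)$ residual transverse intersection points, and the coefficient $k_i+1$ at $x_i$ is precisely the contact order of $H_1$ with $H_d$ at a tangency point of order $k_i$. As the base point of $B$ varies, the residual points $p_j$ sweep out the stratum on which $x_{n+1}$ is a further transverse intersection point distinct from $x_1,\dots,x_n$, namely the open part of $\mathsf{T}_{k_1}\dots\mathsf{T}_{k_n}\mathsf{T}_0$ each with multiplicity $1$, while the point $x_i$ sweeps out $\Delta_{i,n+1}\cap(B\times X_{n+1})$ with multiplicity $k_i+1$. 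This already yields the asserted identity over $B$.

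The main obstacle is to justify the multiplicity $k_i+1$ at the level of the full intersection cycle, not merely at the level of the fibrewise $0$-cycle. For this I would pass to a local model near a generic point of $\Delta_{i,n+1}\cap\pi^{-1}(B)$, choosing analytic coordinates in which $H_1$ is a coordinate axis and $x_i$ is the origin, so that the restriction of the equation of $H_d$ to $H_1$ vanishes to order exactly $k_i+1$ at the origin, uniformly in the family. One then checks that along $\Delta_{i,n+1}$ the local ideal of $\pi^{\ast}\overline{B}\cap\pi_{n+1}^{\ast}\overline{\mathsf{T}}_0$ is generated by this function together with the equations of $\overline{B}$, so that the intersection carries length $k_i+1$; equivalently, the $k_i+1$ points of $H_1\cap H_d$ clustered at $x_i$ do not separate so long as the tangency order is maintained. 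I also need $\Delta_{i,n+1}$ to meet $\overline{B}\times X_{n+1}$ in the expected codimension and generically transversally, so that $[\Delta_{i,n+1}\cap(\overline{B}\times X_{n+1})]=\pi^{\ast}[\mathsf{T}_{k_1}\dots\mathsf{T}_{k_n}]\cdot[\Delta_{i,n+1}]$.

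Finally I would upgrade the identity over $B$ to an equality of classes on $\mathsf{M}_{n+1}$ by taking closures, invoking the hypothesis $d>k+n$ to exclude spurious contributions. This bound forces $d-(k+n)\geq 1$, so a residual transverse point $x_{n+1}$ exists generically and $\mathsf{T}_{k_1}\dots\mathsf{T}_{k_n}\mathsf{T}_0$ is nonempty of the expected dimension $\delta_d+2-k$; a dimension count then shows that the boundary $\overline{B}\setminus B$, where two of the $x_i$ collide or a tangency order jumps, together with any non-transverse loci, has strictly smaller dimension and therefore contributes no further components to the intersection cycle. Collecting the main component with multiplicity $1$ and the $n$ diagonal components with multiplicities $k_i+1$ then gives \eqref{many_Tks}.
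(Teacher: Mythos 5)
Your proposal is correct and follows essentially the same route as the paper: decompose the set-theoretic intersection into the stratum where $x_{n+1}$ is a new transverse point (contributing $[\mathsf{T}_{k_1}\dots\mathsf{T}_{k_n}\mathsf{T}_0]$ with multiplicity one by transversality) and the strata where $x_{n+1}=x_i$, with the multiplicity $k_i+1$ read off from the order of vanishing of the restriction of the defining polynomial of $H_d$ to the line at a tangency point of order $k_i$ (i.e., $f(x,0)=\frac{f_{k_i+1,0}}{(k_i+1)!}x^{k_i+1}+\cdots$ with $f_{k_i+1,0}\neq 0$ generically). Your fibrewise B\'ezout packaging of this count is a pleasant reformulation, but the underlying local computation and the appeal to smoothness of $\overline{\mathsf{T}}_{k_1}\cdots\overline{\mathsf{T}}_{k_n}$ for the transversality of the main term coincide with the paper's argument.
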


Note that implicit in Theorem \ref{theorem_for_many_Tks} is the assertion that
the closure of $\mathsf{T}_{k_1}\dots \mathsf{T}_{k_n}$ in
$\mathsf{M}_n$ and the closure of
$\mathsf{T}_{k_1}\dots \mathsf{T}_{k_n} \mathsf{T}_{0}$ in $\mathsf{M}_{n+1}$
actually define homology classes. We will justify that assertion as well.
In order to prove Theorem \ref{theorem_for_many_Tks}, we first prove an intermediate statement
which is interesting in its own right.
\begin{prp}
\label{prp_smth_mfld}
Let $n$ be a positive integer and $k_1, k_2,\dots, k_{n}$ nonnegative integers
with
$ k :=  k_1+k_2+\dots +k_n$.
Then $\overline{\mathsf{T}}_{k_1} \overline{\mathsf{T}}_{k_2}\dots \overline{\mathsf{T}}_{k_n}$
is a smooth submanifold of $\mathsf{M}_{n}$, provided
$d \geq k+n$.
\end{prp}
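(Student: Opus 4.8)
The plan is to realize $\overline{\mathsf{T}}_{k_1}\cdots\overline{\mathsf{T}}_{k_n}$ as the total space of a projective bundle over a smooth base, so that smoothness becomes automatic from local triviality. First I would forget the curve: let $B \subset \mathcal{D}_1 \times X_1 \times \cdots \times X_n$ be the locus of $(H_1, x_1, \dots, x_n)$ with $x_1, \dots, x_n$ distinct points all lying on $H_1$. This $B$ is smooth, since it fibers over $\mathcal{D}_1 = \check{\mathbb{P}}^2$ with fiber the configuration space of $n$ distinct ordered points on the corresponding line $\mathbb{P}^1$; thus $\dim_{\mathbb C} B = 2 + n$. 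The projection $\mathsf{M}_n \to \mathcal{D}_1 \times X_1 \times \cdots \times X_n$ maps $\overline{\mathsf{T}}_{k_1}\cdots\overline{\mathsf{T}}_{k_n}$ onto $B$, and the whole point is to show this is a bundle projection.

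Next I would identify the fibers. For fixed $(H_1, x_i)$, the condition $(H_1, H_d, x_i) \in \overline{\mathsf{T}}_{k_i}$ is exactly that $H_d$ meets $H_1$ at $x_i$ with intersection multiplicity at least $k_i + 1$, i.e.\ that the restriction $F|_{H_1}$, a degree $d$ form on $H_1 \cong \mathbb{P}^1$, vanishes to order $\geq k_i + 1$ at $x_i$, equivalently its $k_i$-jet at $x_i$ vanishes. Over $B$ these jet spaces assemble into a vector bundle $E \to B$ of rank $\sum_i (k_i + 1) = k + n$, and restriction-followed-by-jet defines a bundle map
\[
\Phi\colon\ \underline{H^0(\mathbb{P}^2, \mathcal{O}(d))} \longrightarrow E
\]
from the trivial bundle with fiber the space of degree $d$ forms. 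By construction the fiber of $\overline{\mathsf{T}}_{k_1}\cdots\overline{\mathsf{T}}_{k_n}$ over a point of $B$ is $\mathbb{P}(\ker \Phi)$. Hence it suffices to prove that $\Phi$ is fibrewise surjective, so that $\ker \Phi$ is a subbundle and $\overline{\mathsf{T}}_{k_1}\cdots\overline{\mathsf{T}}_{k_n}$ is the associated projective bundle over the smooth base $B$, and therefore a smooth submanifold of $\mathsf{M}_n$.

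The heart of the argument, and the main obstacle, is this surjectivity; it is precisely the transversality statement alluded to after the main results, and is where the bound on $d$ enters. Factoring $\Phi$ through the restriction $H^0(\mathbb{P}^2, \mathcal{O}(d)) \twoheadrightarrow H^0(H_1, \mathcal{O}(d))$ (surjective because $H^1(\mathbb{P}^2, \mathcal{O}(d-1)) = 0$), the problem reduces to showing that the divisor $\sum_i (k_i + 1) x_i$ of degree $k + n$ imposes independent conditions on degree $d$ forms on $\mathbb{P}^1$. On $\mathbb{P}^1$ this holds whenever $k + n \leq d + 1$ (equivalently $H^1(\mathbb{P}^1, \mathcal{O}(d - k - n)) = 0$), which is guaranteed by $d \geq k + n$; concretely one writes down a form with the prescribed jets, namely a suitable multiple of $\prod_i (t - t_i)^{k_i + 1}$ with lower-order corrections, and lifts it to $\mathbb{P}^2$. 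Since the rank of $\Phi$ is upper semicontinuous and already attains the full target value $k + n$ at every point of $B$, it is constant there, which is what the bundle structure requires. I expect the construction of explicit forms realizing arbitrary jets to be the only genuinely technical step.

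Finally I would verify that the object so constructed is genuinely $\overline{\mathsf{T}}_{k_1}\cdots\overline{\mathsf{T}}_{k_n}$, i.e.\ that $\overline{\mathsf{T}}_{k_i}$ really is the full ``intersection multiplicity $\geq k_i + 1$'' locus. As each fiber $\mathbb{P}(\ker \Phi)$ is an irreducible projective space, it is enough to produce, over a generic point of $B$, one curve $H_d$ that is smooth at every $x_i$, has tangency of order exactly $k_i$ there, and does not contain $H_1$; the inequality $d \geq k + n$ supplies exactly the room needed for such a curve to exist, whereas for $d = k + n - 1$ every member of the fiber is forced to contain $H_1$, which is why this method cannot relax the bound. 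Density of the open stratum $\mathsf{T}_{k_1}\cdots\mathsf{T}_{k_n}$ in each fiber then identifies the bundle with the desired locus, completing the proof.
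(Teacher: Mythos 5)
Your proof is correct, and it reaches the result by a genuinely different route from the paper's. The paper works in affine charts and runs a double induction (on $n$ and on the orders of tangency), realizing each space as the zero locus of one additional coordinate function $f\mapsto f_{k_n,0}$ on the previously constructed manifold and checking that zero is a regular value; the witness for surjectivity of the differential is the polynomial $\eta(x,y)=\prod_i(x-\mathbf{a}_i)^{k_i+1}\,x^{k_n}$. You instead exhibit the whole locus at once as the projectivization of the kernel of a jet-evaluation bundle map over the configuration base $B$, proving fibrewise surjectivity by cohomology vanishing on $\mathbb{P}^1$ and $\mathbb{P}^2$. The mathematical core is identical --- the independence of the $k+n$ jet conditions on degree-$d$ forms, for which your $\prod_i(t-t_i)^{k_i+1}$ is exactly the paper's $\eta$ --- but your packaging additionally yields irreducibility of the locus and makes completely transparent where the degree bound enters, namely as $h^1\bigl(\mathbb{P}^1,\mathcal{O}(d-k-n)\bigr)=0$; on the other hand, the paper's regular-value template transfers verbatim to Propositions \ref{prp_smth_mfld_with_PA1r_ag}, \ref{prp_smth_mfld_with_A1f_ag} and \ref{prp_smth_mfld_with_A2f_ag}, where the defining conditions are no longer linear in the curve and no projective-bundle structure is available. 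Two minor remarks: the identification of $\overline{\mathsf{T}}_{k_i}$ with the full ``intersection multiplicity at least $k_i+1$'' locus, which you rightly isolate and settle by density in an irreducible fiber, is used silently in the paper when $\bigl(\overline{\mathsf{T}}_{k_1}\cdots\overline{\mathsf{T}}_{k_{n}-1}\bigr)_{\mathsf{Aff}}$ is written down as a linear condition on $f$; and the rank of a morphism of vector bundles is lower, not upper, semicontinuous, though since your $\Phi$ attains the maximal possible rank at every point of $B$, the conclusion that $\ker\Phi$ is a subbundle stands.
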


\begin{rem}
 We are \emph{not} claiming that
\smash{$\overline{\mathsf{T}_{k_1}\mathsf{T}_{k_2}\dots \mathsf{T}}_{k_n}$}
is a smooth manifold. Notice the difference between
\smash{$\overline{\mathsf{T}}_{k_1} \overline{\mathsf{T}}_{k_2}\dots \overline{\mathsf{T}}_{k_n}$}
and \smash{$\overline{\mathsf{T}_{k_1}\mathsf{T}_{k_2}\dots \mathsf{T}}_{k_n}$}; in the former space
all the marked points are distinct, while in the latter space, that is not necessarily true.
\end{rem}

\begin{proof}
We start by proving the proposition for $n = 1$
and $k_1 = 0$, i.e., we show that $\overline{\mathsf{T}}_0$ is a~smooth
submanifold of $\mathsf{M}_1$.

Let $\mathcal{F}_d$ be the space of
polynomials in two variables of degree at most $d$.
This is a vector space of dimension $\frac{d(d+3)}{2} + 1$, because
an element of $\mathcal{F}_d$ can be viewed as
\[
 f(x, y) := f_{00} + f_{10}x + f_{01} y + \frac{f_{20}}{2} x^2 + f_{11} xy + \frac{f_{02}}{2} y^2 +
\dots + \frac{f_{0d}}{d!} y^d,
\]
and hence $f$ can be identified with the vector
$(f_{00}, f_{10}, f_{01},\dots, f_{0d})$.
Let $\mathcal{F}_d^{+}$ denote the space of nonzero polynomials of degree $d$.
We note that the projectivization of $\mathcal{F}_d^{+}$ is $\mathcal{D}_d$.

Consider the projection map
\[ \pi_{+}\colon\ \mathcal{F}_1^{+} \times \mathcal{F}_d^{+} \times X_1 \longrightarrow
\mathcal{D}_1 \times \mathcal{D}_d \times X_1.  \]
We note that the projection map is a submersion. Hence, it suffices to show that
\smash{$\widetilde{\overline{\mathsf{T}}}_0:=
\pi_{+}^{-1}\bigl(\overline{\mathsf{T}}_0\bigr)$} is a smooth submanifold of
$\mathcal{F}_1^{+} \times \mathcal{F}_d^{+} \times X_1$. We prove that now.

Let \smash{$(g, f, p_1) \in \widetilde{\overline{\mathsf{T}}}_0$}.
By choosing a chart around the point $p_1$, we can identify an open neighbourhood of
$p_1$ (in $X_1$) by $\mathbb{C}^2$. Hence, in order to show that
\smash{$\widetilde{\overline{\mathsf{T}}}_0$} is a smooth
submanifold of~${\mathcal{F}_1^{+} \times \mathcal{F}_d^{+} \times X_1}$, it suffices to show that
$(0,0)$ is a regular value of the map
\[\varphi \colon\ \mathcal{F}_1^{+} \times \mathcal{F}_d^{+} \times \mathbb{C}^2 \longrightarrow
\mathbb{C}^2,
\qquad \text{given by}
\quad \varphi(g, f, (x, y)) := (g(x, y), f(x, y)).\]
Let us prove that now. Assume that $\varphi(g,f,(a,b))=0$.
We need to show that the differential of~$\varphi$,
evaluated at $(g, f, (a, b))$ is surjective.
In order to prove that, consider the two curves
\begin{align*}
\gamma_1, \gamma_2&\colon\ (-\varepsilon, \varepsilon)\longrightarrow
\mathcal{F}_1^{+} \times \mathcal{F}_d^{+} \times \mathbb{C}^2,
\qquad \textnormal{given by} \\
\gamma_1(t)&:= (g + t\eta_1, f, (a, b)) \qquad \textnormal{and}
\qquad \gamma_2(t):= (g, f+t\eta_2, (a, b)),
\end{align*}
where
$\eta_1$ and $\eta_2$ are as yet, unspecified elements of
$\mathcal{F}_1^{+}$ and $\mathcal{F}_d^{+}$.
We now note that
\[\{{\rm d}\varphi|_{(g,f(a,b))}\}(\gamma_1^{\prime}(0)) = (\eta_1(a,b), 0)
\qquad \textnormal{and} \qquad \{{\rm d}\varphi|_{(g,f,(a,b))}\}(\gamma_2^{\prime}(0)) = (0, \eta_2(a,b)).\]
Hence, to prove that the differential is surjective, we simply need to produce
$\eta_1 \in \mathcal{F}_1^{+}$ and $\eta_2 \in \mathcal{F}_d^{+}$
such that $\eta_1(a,b) \neq 0$ and $\eta_2(a,b) \neq 0$.
That is easily achieved: we simply define both of them to be the constant functions
taking the value $1$.

Before proceeding further, we make a couple of simplifications that make the
subsequent proofs easier.
We showed that if $\varphi(g, f, (a, b)) = 0$, then the
differential of $\varphi$ is surjective. We claim that by making a suitable change of
coordinates, we can always assume that~$(a,b)$ is the origin and the line is the $x$-axis.
To see why this is so,
assume that the line is given by~${g_{00} + g_{10}x + g_{01}y = 0}$.
Assuming that $g_{01}\neq 0$, define the new coordinates $X$
and $Y$ by $X := x-a$ and $Y := g_{00} + g_{10}x + g_{01}y$.
If $g_{01} =0$, then define $X:= y-b$ and $Y:= g_{00} + g_{10}x + g_{01}y$.
Define
\[F(X,Y):= f(x(X,Y), y(X,Y)).\]
This is the polynomial $f$
written in the new coordinates $X$ and $Y$. Define
\smash{$F_{ij} := \frac{\partial^{i+j}F(X,Y)}{\partial X^i \partial Y^j}\big|_{(0,0)}$}.
In these new coordinates, the point under consideration is the origin
and the line is the $X$-axis. Furthermore, the coefficients of the polynomial
are given by $\frac{F_{ij}}{i!j!}$.
Hence, what we have shown is that
\smash{$\widetilde{\overline{\mathsf{T}}}_0$} is a fibre bundle over the incidence variety
$J$ (where $J$ is defined to be the subset of~${\mathcal{F}_1^{+}\times X_1}$ where the point lies on the line)
and the fibre over $(g, p) \in J$ can be identified with~\smash{$\big(\overline{\mathsf{T}}_0\big)_{\mathsf{Aff}}$}, where
$\big(\overline{\mathsf{T}}_0\big)_{\mathsf{Aff}} := \{ f \in \mathcal{F}_d^+\colon f_{00}=0 \}$.
The map $f$ going to $F$ is a trivialization of this fibre bundle.
Henceforth, we set the line to be the $x$-axis and the point to be the origin; this makes the
calculations simpler.

We now show that $\overline{\mathsf{T}}_{k_1}$ is a smooth
submanifold of $\mathsf{M}_1$ for all $k_1$. We use induction on $k_1$.
Assume that we have proved the assertion till $k_1-1$. Hence
$\overline{\mathsf{T}}_{k_1-1}$ is a smooth
submanifold of~$\mathsf{M}_1$. Hence,
\smash{$\widetilde{\overline{\mathsf{T}}}_{k_1-1}:=
\pi_{+}^{-1}\big(\overline{\mathsf{T}}_{k_1-1}\big)$} is a smooth submanifold of
$\mathcal{F}_1^{+} \times \mathcal{F}_d^{+} \times X_1$.

We now show that \smash{$\widetilde{\overline{\mathsf{T}}}_{k_1}$} is a smooth submanifold
of \smash{$\widetilde{\overline{\mathsf{T}}}_{k_1-1}$}.
Define \smash{$\big(\overline{\mathsf{T}}_{k_1-1}\big)_{\mathsf{Aff}}$} as follows:
\[ \bigl(\overline{\mathsf{T}}_{k_1-1}\bigr)_{\mathsf{Aff}} :=
\bigl\{ f \in \mathcal{F}_d^{+}\mid f_{00}, f_{10} \dots, f_{k_1-1, 0} =0\bigr\}. \]
We note that
$\widetilde{\overline{\mathsf{T}}}_{k_1-1}$
is a fibre bundle over $\mathcal{F}_1^+\times X_1$
whose fibres can be identified with
\smash{$\bigl(\overline{\mathsf{T}}_{k_1-1}\bigr)_{\mathsf{Aff}}$}.
In order to show that
\smash{$\widetilde{\overline{\mathsf{T}}}_{k_1}$} is a smooth submanifold of
$\mathcal{F}_1^+\times \mathcal{F}_d^+\times X_1$, it suffices to show that
zero is a regular value of the map
\smash{$\varphi\colon \bigl(\overline{\mathsf{T}}_{k_1-1}\bigr)_{\mathsf{Aff}} \longrightarrow
\mathbb{C}$}, given by
$\varphi(f):= f_{k_1, 0}$.
Suppose
$f \in \bigl(\overline{\mathsf{T}}_{k_1-1}\bigr)_{\mathsf{Aff}}$.
Let \smash{$\gamma\colon(-\varepsilon, \varepsilon)\longrightarrow
\bigl(\overline{\mathsf{T}}_{k_1-1}\bigr)_{\mathsf{Aff}}$}
be a curve, given by
$\gamma(t):= f+t\eta$,
where $\eta$ is as yet an unspecified polynomial. We now note that
$\{d \varphi\}(\gamma^{\prime}(0)) = \eta_{k_1, 0}$. Now choose $\eta$ as follows
$\eta(x, y) := x^{k_1}$.
Since
$f \in \bigl(\overline{\mathsf{T}}_{k_1-1}\bigr)_{\mathsf{Aff}}$,
$f + t \eta$ also belongs to $\bigl(\overline{\mathsf{T}}_{k_1-1}\bigr)_{\mathsf{Aff}}$
for all $t$ nonzero but small.
Furthermore, $\eta_{k_1, 0} \neq 0$. This proves the claim.

Next, for multiple points, we now use induction on
$n$. If $k_n \geq 1$, define
\[\bigl(\overline{\mathsf{T}}_{k_1} \overline{\mathsf{T}}_{k_2}
\dots \overline{\mathsf{T}}_{k_{n-1}} \overline{\mathsf{T}}_{k_{n}-1}\bigr)_{\mathsf{Aff}}
\]
to be the following subset of $\mathcal{F}_d^{+} \times \mathbb{C}^{n-1}$: it is
the collection of all $(f, \mathbf{a}_1, \dots, \mathbf{a}_{n-1})$, such that
\begin{itemize}\itemsep=0pt
\item The numbers
$\mathbf{a}_1, \mathbf{a}_2, \dots \mathbf{a}_{n-1}$ are all distinct
from each other and different from zero.
\item All the derivatives of $f$ with respect to $x$
at $\mathbf{a}_i$ up to order $k_i$ are zero, for $i=1$ to $n-1$.
\item All the derivatives of $f$ with respect to $x$
at $(0, 0)$ up to order $k_n-1$ are zero.
\end{itemize}
It is also convenient to define
$\bigl(\overline{\mathsf{T}}_{k_1} \overline{\mathsf{T}}_{k_2}
\dots \overline{\mathsf{T}}_{k_{n-1}} \overline{\mathsf{T}}_{-1}\bigr)_{\mathsf{Aff}}$
as the following subset of $\mathcal{F}_d^{+} \times \mathbb{C}^{n-1}$: it is
the collection of all $(f, \mathbf{a}_1, \dots, \mathbf{a}_{n-1})$, such that
\begin{itemize}\itemsep=0pt
\item The numbers
$\mathbf{a}_1, \mathbf{a}_2, \dots , \mathbf{a}_{n-1}$ are all distinct
from each other and different from zero.
\item All the derivatives of $f$ with respect to $x$
at $\mathbf{a}_i$ up to order $k_i$ are zero, for $i=1$ to $n-1$.
\end{itemize}
Arguing as before,
it suffices to show that for all $k_n \geq 0$, zero is a regular value of the map
\[\varphi \colon\ \bigl(\overline{\mathsf{T}}_{k_1} \overline{\mathsf{T}}_{k_2}
\dots \overline{\mathsf{T}}_{k_{n-1}} \overline{\mathsf{T}}_{k_{n}-1}\bigr)_{\mathsf{Aff}}
 \longrightarrow \mathbb{C} \qquad \text{given by}
\quad \varphi(f, x_1, \dots, x_{n-1}) := f_{k_n, 0}. \]
Let us prove that now. Suppose
\[(f,\mathbf{a}_1, \dots, \mathbf{a}_{n-1})
\in \bigl(\overline{\mathsf{T}}_{k_1} \overline{\mathsf{T}}_{k_2}
\dots \overline{\mathsf{T}}_{k_{n-1}} \overline{\mathsf{T}}_{k_{n}-1}\bigr)_{\mathsf{Aff}}.\]
Let $\gamma\colon(-\varepsilon, \varepsilon)\longrightarrow
\bigl(\overline{\mathsf{T}}_{k_1} \overline{\mathsf{T}}_{k_2}
\dots \overline{\mathsf{T}}_{k_{n-1}} \overline{\mathsf{T}}_{k_{n}-1}\bigr)_{\mathsf{Aff}}$
be a curve, given by
$\gamma(t) := (f+t\eta, \mathbf{a}_1, \dots, \mathbf{a}_{n-1})$
where $\eta$ is as yet an unspecified polynomial. We now note that
$\{{\rm d} \varphi\}(\gamma^{\prime}(0)) = \eta_{k_{n}, 0}$. Now choose~$\eta$ as follows
\[ \eta(x, y) := (x-\mathbf{a}_1)^{k_1+1} \cdots (x-\mathbf{a}_{n-1})^{k_{n-1}+1} (x-0)^{k_n}.\]
Since $(f, \mathbf{a}_1, \dots, \mathbf{a}_{n-1})$
belongs to $\bigl(\overline{\mathsf{T}}_{k_1} \overline{\mathsf{T}}_{k_2}
\dots \overline{\mathsf{T}}_{k_{n-1}} \overline{\mathsf{T}}_{k_{n}-1}\bigr)_{\mathsf{Aff}}$,
$\gamma(t)$
also lies there for all $t$.
Furthermore, $\eta_{k_n, 0} \neq 0$, because the $\mathbf{a}_i$ are all different from zero.
This proves the proposition.
\end{proof}

We are now ready to prove Theorem \ref{theorem_for_many_Tks}.

\begin{proof}[Proof of Theorem \ref{theorem_for_many_Tks}]
We first show that \eqref{many_Tks} is valid on the set-theoretic level. Consider the first term
on the left-hand side, namely
$\pi^{\ast}[\mathsf{T}_{k_1}\dots \mathsf{T}_{k_n}]$.
It is represented by the closure of the following space: a line, a curve and
$n+1$ distinct points $(x_1,\dots, x_{n+1})$, such that the
curve is tangent to the line at the points
$(x_1,\dots, x_n)$ to orders $k_1,\dots, k_n$
respectively; the last point~$x_{n+1}$ is free (it does not have to lie on either the line or the curve).

Now consider second factor on the left-hand side of \eqref{many_Tks},
namely $\pi_{n+1}^{\ast}[\mathsf{T}_0]$.
This is simply represented by the following space: a
line, a curve and $n+1$ points $(x_1,\dots, x_{n+1})$,
such that the points $(x_1,\dots, x_n)$ are free, while the last point $x_{n+1}$ lies on the line
and the curve.

Consider the set-theoretic intersection of the above two spaces. There are two possibilities. The first
possibility is that the point $x_{n+1}$ is distinct from all the other points
$(x_1,\dots, x_n)$.
The closure of that space represents the first term on the right-hand side of
\eqref{many_Tks}. But there is another possibility. The point $x_{n+1}$ could be equal to one of the
$x_i$
(for $i \in \{1,\dots, n\}$).
That precisely gives us the second term on the right-hand side of \eqref{many_Tks}.

To see that equation \eqref{many_Tks} is valid on the level of homology,
we need to do the following.
To justify the first term on the right-hand side of \eqref{many_Tks},
we need to
show that the intersections are transverse;
this follows from the proof of Proposition \ref{prp_smth_mfld}.
To justify the second term,
we need to justify the multiplicity of the intersection.

Consider the situation of
$x_1$ coinciding with $x_{n+1}$. We take a chart that sends the point
$x_{n+1}$ to be the origin and sets the line to be the $x$-axis.
The situation now is that we have a curve $f$ that is tangent to the $x$-axis to order $k_1$
at the origin. We are now going to
study the multiplicity with which the evaluation map vanishes at the origin.
Hence, $f$ is such that $f_{00}, f_{10}, \dots , f_{k_1 0}$ all vanish. It is given by
\[
f(x, y)  =  \frac{f_{k_1+1, 0}}{(k_1+1)!} x^{k_1+1}
+\frac{f_{k_1+2, 0}}{(k_1+2)!} x^{k_1+2} + \dots +
\frac{f_{d, 0}}{d !} x^{d}  + y \mathcal{R}(x, y).
\]
Now consider the evaluation map
\[
\varphi(f, x) :=  f(x, 0)  = \frac{f_{k_1+1, 0}}{(k_1+1)!} x^{k_1+1}
+\frac{f_{k_1+2, 0}}{(k_1+2)!} x^{k_1+2} + \dots +
\frac{f_{d, 0}}{d !} x^{d}.
\]
The order of vanishing of $\varphi$ is clearly $k_1+1$, provided
$f_{k_1+1, 0} \neq 0$
(the values of $f_{k_1+2, 0}, f_{k_1+3, 0},\allowbreak \dots, f_{d,0}$ are not relevant for the order of vanishing
in a neighbourhood of the origin if $f_{k_1+1, 0}$ is non-zero).

The assumption $f_{k_1+1, 0} \neq 0$ is valid, because
to compute the order of vanishing, we will be intersecting with
cycles that correspond to constraints being generic. Hence, the order of vanishing is $k_1+1$.
This proves \eqref{many_Tks} on the level of homology.
\end{proof}

We are now ready to prove our next result.

\begin{thm}
\label{Tk1Tk2_etc_alt_way}
Let $n$ be a positive integer, $k_1, k_2,\dots, k_{n-1}$ nonnegative integers
and $k_n$ a~positive integer.
Define
$
k :=  k_1+k_2+\dots +k_n$.
Then the following equality of elements of $H_{2(\delta_d-k)}(\mathsf{M}_{n+1}, {\mathbb R})$ holds:
\begin{align}
[\mathsf{T}_{k_1} \dots \mathsf{T}_{k_{n-1}} \mathsf{T}_{k_n-1} \mathsf{T}_0] \cdot \bigl[\Delta_{n, {n+1}}^{\mathsf{L}}\bigr] & =
\pi^*[\mathsf{T}_{k_1} \dots \mathsf{T}_{k_n}] \cdot [\Delta_{n, n+1}],
\label{Tmu_T0_nu_alt_way_ag}
\end{align}
provided $d > k+n-1$.
\end{thm}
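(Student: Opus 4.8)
We need to prove an equality in $H_{2(\delta_d - k)}(\mathsf{M}_{n+1}, \mathbb{R})$, namely
\[
[\mathsf{T}_{k_1} \dots \mathsf{T}_{k_{n-1}} \mathsf{T}_{k_n-1} \mathsf{T}_0] \cdot \bigl[\Delta_{n, n+1}^{\mathsf{L}}\bigr] = \pi^*[\mathsf{T}_{k_1} \dots \mathsf{T}_{k_n}] \cdot [\Delta_{n, n+1}].
\]
The left-hand side involves the collision divisor class $\Delta_{n,n+1}^{\mathsf{L}} = a_n + a_{n+1} - y_1$ (restricted appropriately, as in equation \eqref{Line_Diag_Defn}), intersected with a cycle where the curve is tangent to order $k_i$ at $x_i$ for $i<n$, tangent to order $k_n-1$ at $x_n$, and meets the line transversally at the auxiliary point $x_{n+1}$. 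The right-hand side forces $x_n = x_{n+1}$ geometrically via $\Delta_{n,n+1}$, with the curve tangent to order $k_n$ at that merged point.

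**The plan.** I would analyze the cycle $[\mathsf{T}_{k_1} \dots \mathsf{T}_{k_{n-1}} \mathsf{T}_{k_n-1} \mathsf{T}_0]$ restricted to the locus $X_{n,n+1} := \{x_n, x_{n+1} \in H_1\}$, where the collision lemma (Lemma~\ref{CL_ver2}) applies. On this locus the two relevant marked points $x_n, x_{n+1}$ both lie on the line $H_1$, so the collision lemma identifies the restriction of $\Delta_{n,n+1}^{\mathsf{L}} = a_n + a_{n+1} - y_1$ with the Poincar\'{e} dual of the diagonal $\{x_n = x_{n+1}\}$. The first step is therefore to verify that the open cycle $\mathsf{T}_{k_1} \dots \mathsf{T}_{k_{n-1}} \mathsf{T}_{k_n-1} \mathsf{T}_0$ indeed lies inside this $X_{n,n+1}$: at a generic point $x_n$ is tangent (hence lies on the line) and $x_{n+1} \in \mathsf{T}_0$ lies on both line and curve, so both points are on $H_1$. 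Thus intersecting with $\Delta_{n,n+1}^{\mathsf{L}}$ is, by the collision lemma, the same as imposing $x_n = x_{n+1}$, which is exactly intersecting with $[\Delta_{n,n+1}]$. The set-theoretic content of the right-hand side is then that when $x_{n+1}$ collides with $x_n$, the order of tangency jumps from $k_n - 1$ (plus the transverse contact $\mathsf{T}_0$) up to $k_n$.

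**Carrying it out.** After establishing the set-level identity, I would upgrade to homology by verifying the intersection multiplicity is exactly $1$, so that no extra coefficient appears. Following the local-coordinate computation used in the proof of Theorem~\ref{theorem_for_many_Tks}, I would choose a chart sending the collision point to the origin with $H_1$ the $x$-axis, so that $f$ has $f_{00}, f_{10}, \dots, f_{k_n-1,0}$ vanishing (the $\mathsf{T}_{k_n-1}$ condition) while the transverse marked point $x_{n+1}$ contributes one further contact. The evaluation map controlling the collision has a first-order vanishing in the collision parameter once the genericity assumption $f_{k_n,0} \neq 0$ is imposed, which holds because we intersect with generic constraints. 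Consequently the geometric collision on the left reproduces the $\mathsf{T}_{k_n}$ locus on the right with multiplicity one, and the two $\Delta_{n,n+1}$-intersections agree as homology classes.

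**Main obstacle.** The delicate point is the transition from set-theoretic equality to equality of homology classes: one must confirm that the collision lemma, which computes $\mathrm{PD}_X[Y]$ inside the \emph{line-incidence} locus $X$, applies verbatim when $X$ is replaced by the larger tangency cycle $\mathsf{T}_{k_1}\dots\mathsf{T}_{k_n-1}\mathsf{T}_0$, and that the intersection is transverse away from the collision locus so that no hidden boundary components of the closure contribute. The hypothesis $d > k + n - 1$ is precisely what guarantees (via Proposition~\ref{prp_smth_mfld}) the smoothness and dimensional transversality needed for these cycles to behave as honest homology classes under restriction and intersection; I would invoke that proposition to rule out excess-dimensional pieces in the closure and to justify that the collision contributes with multiplicity one rather than a higher order.
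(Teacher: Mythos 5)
Your overall strategy is the paper's: use the collision lemma to convert intersection with $\Delta_{n,n+1}^{\mathsf{L}}$ into the geometric condition $x_n = x_{n+1}$, analyse the collision in an affine chart with the line as the $x$-axis, and check the multiplicity is one. However, there is a genuine gap in the set-theoretic step. Your plan only argues one inclusion: that when the $\mathsf{T}_0$ point collides with the $\mathsf{T}_{k_n-1}$ point the tangency order jumps to $k_n$, i.e., the collision locus of $\overline{\mathsf{T}_{k_1}\dots\mathsf{T}_{k_{n-1}}\mathsf{T}_{k_n-1}\mathsf{T}}_0$ lands inside $\overline{\mathsf{T}}_{k_n}$ (intersected with the diagonal). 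To get an equality of cycles you must also prove the converse: every point of $\mathsf{T}_{k_1}\dots\mathsf{T}_{k_n}$ arises as a limit of configurations in $\mathsf{T}_{k_1}\dots\mathsf{T}_{k_{n-1}}\mathsf{T}_{k_n-1}\mathsf{T}_0$ with the two points distinct. Without this, the left-hand side could be supported on a proper subvariety of the diagonal locus and the claimed identity would fail. This converse is where the paper does its real work: starting from $f$ with $f_{00},\dots,f_{k_n,0}=0$ it constructs a nearby curve $f_t$ with $f_{t_{k_n,0}} = -\tfrac{f_{t_{k_n+1,0}}}{k_n+1}\,t + O\bigl(t^2\bigr)$ passing through $(t,0)$, inverts this by the implicit function theorem, and — crucially when other tangency points or marked points are present — uses the fact from Proposition~\ref{prp_smth_mfld} that $f_{k_n,0}$ is an honest local coordinate on the stratum $\overline{\mathsf{T}}_{k_1}\cdots\overline{\mathsf{T}}_{k_{n-1}}\overline{\mathsf{T}}_{k_n-1}$, so that the deformation can be performed while keeping all the remaining constraints satisfied. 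Your proposal never formulates this deformation argument, and "establishing the set-level identity" silently assumes it.

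A secondary, smaller issue: your genericity condition for the multiplicity computation is off by one. The relation between the collision parameter $t$ and the transverse coordinate is $f_{t_{k_n,0}} \sim t$ up to the nonzero factor $f_{t_{k_n+1,0}}/(k_n+1)$, so the hypothesis you need for first-order vanishing is $f_{k_n+1,0}\neq 0$ at the limiting $\mathsf{T}_{k_n}$ point (i.e., the tangency there is exactly of order $k_n$), not $f_{k_n,0}\neq 0$ — the latter is automatically zero at the limit. This is easy to repair but as written the multiplicity argument does not parse.
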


Before we prove Theorem \ref{Tk1Tk2_etc_alt_way}, a few things are explained.
Consider the special case of this theorem, when $n = 1$ and $k_1 = 1$.
In this case, \eqref{Tmu_T0_nu_alt_way_ag} simplifies to
\begin{align}
[\mathsf{T}_0 \mathsf{T}_0] \cdot \bigl[\Delta^{\mathsf{L}}_{12}\bigr] &  =  \pi^*[\mathsf{T}_1]\cdot [\Delta_{12}].
\label{T0T0_collides_T1_aagg}
\end{align}
If we draw an analogy with equation \eqref{A1FT0T0_coll_cycle_ver},
it might seem that the right-hand side
of equation~\eqref{T0T0_collides_T1_aagg}
has a missing term, namely
a term that corresponds to nodal curves lying
on a line. However, that is not the case; there are no missing terms.
The term that seems to be missing is actually present: it is present
inside the \emph{closure} $\overline{\mathsf{T}}_1$. However, since this locus
is one codimension higher, when we intersect
equation \eqref{T0T0_collides_T1_aagg} with a class of complementary dimension, we do not get
any contribution. We explain this more precisely. Define
$\mu := y_1^{2} y_d^{\delta_d-1}$. Now intersect both sides of
equation \eqref{T0T0_collides_T1_aagg} with $\mu$. That gives us
$[\mathsf{T}_0 \mathsf{T}_0] \cdot \bigl[\Delta^{\mathsf{L}}_{12}\bigr]\cdot \mu  =
[\mathsf{T}_1]\cdot \mu$.
The term that one might be worried that one has missed out, namely nodal curves
with the node lying on the line, giving empty intersection with $\mu$; this is
because we are making the curve pass through $\delta_d-1$ generic points \big(since we
are intersecting with $y^{\delta_d-1}$\big).

In contrast, look at equation \eqref{A1FT0T0_coll_cycle_ver}, namely
\begin{align*}
\bigl[\mathsf{A}_1^{\mathsf{F}}\mathsf{T}_0 \mathsf{T}_0\bigr] \cdot \bigl[\Delta^{\mathsf{L}}_{12}\bigr] &  =  \pi^*\bigl[\mathsf{A}_1^{\mathsf{F}}\mathsf{T}_1\bigr]\cdot [\Delta_{12}] + 2\pi^*\bigl[\mathsf{A}_1^{\mathsf{L}}\bigr]\cdot [\Delta_{12}]\cdot
\bigl[\Delta_{2}^1\bigr].
\end{align*}
The first geometric fact we note that $\mathsf{A}_1^{\mathsf{L}}$ is \emph{not} a subset of the
closure \smash{$\overline{\mathsf{A}_1^{\mathsf{F}}\mathsf{T}}_1$}.
What is true is that $\mathsf{A}_1^{\mathsf{F}}\mathsf{A}_1^{\mathsf{L}}$
is a subset of the closure (which for dimensional reasons, does not contribute when
we intersect with a class of complimentary dimension). In order to extract
numbers, define~$\mu := y_1^2 y_d^{\delta_d-2}$.
Intersecting both sides of equation \eqref{A1FT0T0_coll_cycle_ver}
with $\mu$ gives us
\begin{align*}
\bigl[\mathsf{A}_1^{\mathsf{F}}\mathsf{T}_0 \mathsf{T}_0\bigr] \cdot \mu  =
\bigl[\mathsf{A}_1^{\mathsf{F}}\mathsf{T}_1\bigr]\cdot \mu + 2 \bigl[\mathsf{A}_1^{\mathsf{L}}\bigr]\cdot \mu.
\end{align*}
In contrast to the earlier case, the intersection of
$\bigl[\mathsf{A}_1^{\mathsf{L}}\bigr]$ with $\mu$ is nonzero (or at least not necessarily zero)
because we are making the curve pass through $\delta_d-2$ points; that is precisely the right number
of points to enumerate $1$-nodal curves, with the node lying on a line.

\begin{proof}[Proof of Theorem \ref{Tk1Tk2_etc_alt_way}]
We first prove the theorem for $n = 1$ and $k_1 = 1$, namely
we prove equation \eqref{T0T0_collides_T1_aagg}.
The main set-theoretic statement that we need to prove is as follows:
consider the component of the closure $\overline{\mathsf{T}_0 \mathsf{T}}_0$.
where the two marked points are equal. Then the first derivative of the
polynomial (defining the curve) along the direction of the line
(evaluated at the marked point) is zero. In other words, if
$(H_1, H_d, p, p) \in \overline{\mathsf{T}_0 \mathsf{T}}_0$, then
$(H_1, H_d, p) \in \overline{\mathsf{T}}_1$. Let us prove this assertion.

We continue with the set-up of the proof of Proposition \ref{prp_smth_mfld}
and Theorem \ref{theorem_for_many_Tks}.
Choose coordinate where the designated line stays the $x$-axis.
Let $f_t$ be a curve that passes through the origin and also through the point $(t, 0)$.
The expression for $f_t$ is of the form
$
f_t(x, y)  =  \varphi_t(x)+ y \mathcal{R}_t(x, y)$,
where \smash{$\varphi_{t}(x) = \bigl(f_{t_{10}} x + \frac{f_{t_{20}}}{2} x^2 + \cdots\bigr)$}.
Since the curve passes through $(t, 0)$, it follows that
$
\varphi_{t}(x)  =  (\mathcal{K}_t(x))x(x-t)
$
for some function $\mathcal{K}_t(x)$. Denote $f_0$ by $f$. Hence,
$
f(x, y) =  (\mathcal{K}_0(x))x^2 + y \mathcal{R}_0(x, y)$.
It is a simple check to see that $f_{00}$ and $f_{10}$ are both zero.
Hence, if two $\mathsf{T}_0$ points collide, then we get a point which is at least as degenerate as a
$\mathsf{T}_1$ point (it could be even more degenerate). This proves the
assertion we made.

We now prove a more general statement.
We claim the following:
consider the component of the closure $\overline{\mathsf{T}_{k} \mathsf{T}}_0$
where the two marked points are equal. Then the $(k+1)$-th derivative of the
polynomial (defining the curve) along the direction of the line
(evaluated at the marked point) is zero. In other words, if
$(H_1, H_d, p, p) \in \overline{\mathsf{T}_{k} \mathsf{T}}_0$, then
$(H_1, H_d, p) \in \overline{\mathsf{T}}_{k+1}$.

In order to prove the above assertion, put the
$\mathsf{T}_k$ point at $(0, 0)$ and the $\mathsf{T}_0$ point at $(t, 0)$.
The expression for $f_t$ is going to be of the form
\begin{align*}
f_t(x, y)& = \varphi_t(x)+ y \mathcal{R}_t(x, y),\qquad
\textnormal{where} \quad \varphi_{t}(x) =  f_{t_{10}} x + \frac{f_{t_{20}}}{2} x^2 + \frac{f_{t_{30}}}{6} x^3 + \cdots .
\end{align*}
Since the curve is tangent of the $x$-axis to order $k$ and it also passes through $(t, 0)$,
it follows that
$
\varphi_{t}(x)  =  (\mathcal{K}_t(x))x^{k+1}(x-t)
$
for some function $\mathcal{K}_t(x)$. To see what happens in the limit as~$t$ goes to zero, denote $f_0$ by $f$.
Hence,
\[
f(x, y) =  (\mathcal{K}_0(x))x^{k+2} + y \mathcal{R}_0(x, y).
\]
It is a simple check that $f_{00}, f_{10}, \dots, f_{k+1, 0}$ are all zero.
Hence, if a $\mathsf{T}_k$ and a $\mathsf{T}_0$ point collide, then
we get a point which is at least as degenerate as a $\mathsf{T}_{k+1}$ point (it could be even more degenerate).

We now need to prove the converse of the above assertion.
We claim the following: if $(H_1, H_d, p) \in \overline{\mathsf{T}}_{k+1}$,
then $(H_1, H_d, p, p) \in \overline{\mathsf{T}_{k} \mathsf{T}}_0$.

First we note that to prove the above claim, it is sufficient to prove the following:
if $(H_1, H_d, p) \in \mathsf{T}_{k+1}$,
then $(H_1, H_d, p, p) \in \overline{\mathsf{T}_{k} \mathsf{T}}_0$.
This is because if $A$ is a subset of $B$, then closure of $A$ is a subset of
closure of $B$. Hence, if $B$ is a closed set, then to show that closure of~$A$
is a subset of $B$, it is sufficient to show that $A$ is a subset of $B$.

We start by proving the claim for $k = 0$, namely, we show that
every $\mathsf{T}_1$ point can be obtained as a limit to two $\mathsf{T}_0$ points.

Let $f \in (\mathsf{T}_1)_{\textnormal{Aff}}$. This means that $f_{00}$ and $f_{10}$ are both equal to zero.
Hence, $f$ is given by
\[
f(x, y)  =  \left(\frac{f_{20}}{2} x^2 + \frac{f_{30}}{6} x^3 + \cdots\right) + y \mathcal{R}(x, y).
\]
It will be shown that there exists a point $(f_t, (t, 0))$ close to $(f, (0, 0))$, such that
$f_t$ passes through the origin and $(t, 0)$. Note that
since $f_t$ passes through the origin, it is of the form
\[
f_t(x, y) = f_{t_{10}}x+ \left(\frac{f_{t_{20}}}{2} x^2 + \frac{f_{t_{30}}}{6} x^3 +\cdots\right) +
y \mathcal{R}_t(x, y).
\]
Furthermore, $f_{t_{10}}$ has to be small (since $f_t$ is close to $f$ and $f_{10}$ is zero).
Impose the condition that $f_{t}(t, 0) = 0$. Plugging in $(t, 0)$ inside $f_t$ and using the fact
that $t \neq 0$, it follows that
\begin{align}
f_{t_{10}}&= -\frac{f_{t_{20}}}{2} t + O\bigl(t^2\bigr). \label{ft_10}
\end{align}
Hence, we have constructed this nearby curve $f_t$ and a marked point $(t, 0)$ different from the origin
that lies on the curve and the line.

To find the multiplicity of the intersection, we note that using equation \eqref{ft_10},
using the fact that $f_{t_{20}} \neq 0$
and the implicit function theorem, we can rewrite it as
\begin{align}
t & = -\frac{2}{f_{t_{20}}} f_{t_{10}} + O\bigl(f_{t_{10}}^2\bigr). \label{ft_10_t}
\end{align}
Setting the two points to be equal is the same as setting $t$ to be equal to zero.
By equation~\eqref{ft_10_t}, the order of vanishing of $t$ is one. This
justifies the multiplicity.

We now prove the assertion for a general $k$,
i.e., we show that
every $\mathsf{T}_{k+1}$ curve is in the limit of a $\mathsf{T}_{k}$
and $\mathsf{T}_0$ point.
Let $f \in (\mathsf{T}_{k+1})_{\textnormal{Aff}}$.
This means that $f_{00}, f_{10}, \dots, f_{k+1,0}$ are all equal to zero.
Hence, $f$ is given by
\[
f(x, y)  =  \left(\frac{f_{k+2,0}}{(k+2)!} x^{k+2} +
\frac{f_{k+3,0}}{(k+3)!} x^{k+3} + \cdots\right) + y \mathcal{R}(x, y).
\]
It will be shown that there exists a point $(f_t, (t, 0))$ close to $(f, (0, 0))$, such that
$f_t \in (\mathsf{T}_{k})_{\textnormal{Aff}}$
and~$(t, 0)$.
Since $f_t \in (\mathsf{T}_{k})_{\textnormal{Aff}}$,
it is of the form
\[
f_t(x, y) = \frac{f_{t_{k+1, 0}}}{(k+1)!}x^{k+1}+
\left(\frac{f_{t_{k+2, 0}}}{(k+2)!} x^{k+2} + \frac{f_{t_{k+3,0}}}{(k+3)!} x^{k+3} +\cdots\right) +
y \mathcal{R}_t(x, y).
\]
Furthermore, $f_{t_{k+1, 0}}$ has to be small (since $f_t$ is close to $f$ and $f_{k+1, 0}$ is zero).
Impose the condition that $f_{t}(t, 0) = 0$. Plugging in $(t, 0)$ inside $f_t$ and using the fact
that $t \neq 0$, it follows that
\begin{align}
f_{t_{k+1, 0}}&= -\frac{f_{t_{k+2, 0}}}{(k+2)} t + O\bigl(t^2\bigr). \label{ft_k0}
\end{align}
Hence, we have constructed this nearby curve $f_t$ and a marked point $(t, 0)$ different from the origin
that lies on the curve and the line.

To find the multiplicity of the intersection, we note that using equation \eqref{ft_k0},
using the fact that $f_{t_{k+2, 0}} \neq 0$ (which is true because after we make the curves
pass through generic points)
and the implicit function theorem, we can rewrite it as
\begin{align}
t & = -\frac{2}{f_{t_{k+2, 0}}} f_{t_{k+1, 0}} + O\bigl(f_{t_{k+1, 0}}^2\bigr). \label{ft_k0_t}
\end{align}
Setting the two points to be equal is the same as setting $t$ to be equal to zero.
By equation~\eqref{ft_k0_t}, the order of vanishing of $t$ is one. This
justifies the multiplicity.

We summarize the set-theoretic statement we have just proved.
We have shown that $(H_1, H_d,\allowbreak  p, p)$ belongs to $\overline{\mathsf{T}_k \mathsf{T}}_0$
if and only if $(H_1, H_d, p)$ belongs to $\overline{\mathsf{T}}_{k+1}$.

We now examine what happens when there are more than two points involved.
We explain with the help of a single example; the general case follows in a similar way.
Consider the following assertion:
\[
[\mathsf{T}_2 \mathsf{T}_0 \mathsf{T}_0] \cdot \bigl[\Delta^{\mathsf{L}}_{23}\bigr]  =
\pi^*[\mathsf{T}_2\mathsf{T}_1]\cdot [\Delta_{23}].
\]
The closure claim that we need to prove is as follows:
$(H_1, H_d, q, p, p)$ belongs to \smash{$\overline{\mathsf{T}_2\mathsf{T}_0 \mathsf{T}}_0$}
if and only if $(H_1, H_d, p)$ belongs to~\smash{$\overline{\mathsf{T}}_{1}$}.
One direction is the same as before namely that if~${(H_1, H_d, q, p, p)}$ belongs to
$\overline{\mathsf{T}_2\mathsf{T}_0 \mathsf{T}}_0$, then~$(H_1, H_d, p)$ belongs to $\overline{\mathsf{T}}_{1}$. The fact that
$(H_1, H_d, q)$ belongs to $\overline{\mathsf{T}}_2$ makes no difference in the proof.
It is the converse that requires a little bit more argument. We need to show that
every $\mathsf{T}_2 \mathsf{T}_1$ point can be obtained as a limit of
$\mathsf{T}_2 \mathsf{T}_0 \mathsf{T}_0$. First see what we did when we
had to show every $\mathsf{T}_1$ point can be obtained as a limit of
$\mathsf{T}_0 \mathsf{T}_0$. We constructed the curve as given by equation
\eqref{ft_10}. The problem with equation \eqref{ft_10} is that if we set
$f_{t_{10}}$ as given by \eqref{ft_10} and the remaining $f_{t_{ij}}$ to be
complex numbers close to $f_{ij}$, then this curve does not satisfy the
$\mathsf{T}_2$ condition at $q$. The problem is the \smash{$f_{t_{ij}}$} are not all free.
However, we have shown through the proof of Proposition~\ref{prp_smth_mfld} that
restricted to the submanifold~$\mathsf{T}_2 \overline{\mathsf{T}}_0$, the section induced by $f_{10}$ is transverse to
zero. Hence, $f_{10}$ is an actual coordinate on the submanifold~$\mathsf{T}_2 \overline{\mathsf{T}}_0$.
Hence, \eqref{ft_10} defines a curve lying in~$\mathsf{T}_2 \overline{\mathsf{T}}_0$ that
converges to an element of~$\mathsf{T}_2 \overline{\mathsf{T}}_1$ as~$t$ goes to zero.
The multiplicity computation is the same. The general case follows from equation~\eqref{ft_k0} and using the fact that $f_{k+1, 0}$ is a~coordinate on~$\mathsf{T}_2 \overline{\mathsf{T}}_{k}$. When there are more than three points involved, the
same argument holds via the proof of Proposition~\ref{prp_smth_mfld},
namely that~$f_{k_n, 0}$ is a coordinate on
$\overline{\mathsf{T}}_{k_1} \overline{\mathsf{T}}_{k_2}
\dots \overline{\mathsf{T}}_{k_{n-1}}\overline{\mathsf{T}}_{k_{n}-1}$.
This completes the proof of Theorem~\ref{Tk1Tk2_etc_alt_way}.
 \end{proof}

\begin{rem} The pullback to $\mathsf{M}_n$ of the hyperplane classes in
$X_{i}$ for $i = 1, \dots, n$ are denoted by $a_1, \dots, a_n$. Note that \eqref{many_Tks} can be rewritten as
\begin{align}
[\mathsf{T}_{k_1} \mathsf{T}_{k_2} \dots \mathsf{T}_{k_n} \mathsf{T}_{0}]  ={}&
\pi^{\ast}[\mathsf{T}_{k_1} \mathsf{T}_{k_2} \dots \mathsf{T}_{k_n}]\cdot
\pi_{n+1}^{\ast}[\mathsf{T}_0] \nonumber\\
&-\sum_{i=1}^{n}(1+k_i) \pi^{\ast} [\mathsf{T}_{k_1} \mathsf{T}_{k_2} \dots \mathsf{T}_{k_n}] \cdot
[\Delta_{i, {n+1}}].\label{many_Tks_rewrite}
\end{align}
Also, $[\Delta_{i, {n+1}}] = a_i^2 + a_i a_{n+1} + a_{n+1}^2$.
Hence,
using \eqref{T0_cycle_expression_base_ag} and \eqref{many_Tks_rewrite}
we can recursively compute all the intersection numbers
involving
the class $[\mathsf{T}_{k_1} \mathsf{T}_{k_2} \dots \mathsf{T}_{k_n} \mathsf{T}_{0}]$.
Next, let $\alpha$ be a class in $\mathsf{M}_n$.
Then \eqref{Tmu_T0_nu_alt_way_ag} implies that
\begin{align}
[\mathsf{T}_{k_1} \dots \mathsf{T}_{k_n}] \cdot \alpha =  [\mathsf{T}_{k_1} \dots \mathsf{T}_{k_{n-1}} \mathsf{T}_{k_n-1}
\mathsf{T}_0] \cdot \bigl[\Delta_{n, {n+1}}^{\mathsf{L}}\bigr] \cdot \alpha. \label{Tmu_T0_nu_alt_way_ag8}
\end{align}
Hence, using equations
\eqref{Tmu_T0_nu_alt_way_ag8} and \eqref{Line_Diag_Defn}
and using the fact that all
intersection numbers
involving
the class $[\mathsf{T}_{k_1} \mathsf{T}_{k_2} \dots \mathsf{T}_{k_n} \mathsf{T}_{0}]$
are computable, we conclude that
all
intersection numbers
involving
the class $[\mathsf{T}_{k_1} \mathsf{T}_{k_2} \dots \mathsf{T}_{k_n}]$
are computable. From the procedure to compute, it is clear that these
intersection numbers are all polynomials in $d$.
\end{rem}

\section[Counting 1-nodal curves with multiple tangencies]{Counting $\boldsymbol{ 1}$-nodal curves with multiple tangencies}\label{nodal_tang}
\label{one_nodal_tang_comp}

We now consider plane curves with singularities.
Consider the following question: How many $1$-nodal degree $d$
curves are there in $\mathbb{P}^2$
that pass through $\delta_d-2$ (see \eqref{eb})
generic points and that are tangent to a given line?
For this, consider
the space of curves with three distinct points~$p_1$,~$x_1$, and $x_2$ such that the curve has a node at $p_1$ and intersects the given line
transversally at $x_1$ and~$x_2$.
The closure of this space represents a cycle.
Next, impose the condition
that $x_1$ becomes equal to~$x_2$ (which again, represents a cycle). We might naively expect that
the intersection of these two cycles give us the space of one nodal curves
tangent to a given line, as it would be suggested by the following picture:

\begin{figure}[!h]\centering
\includegraphics[scale = .9]{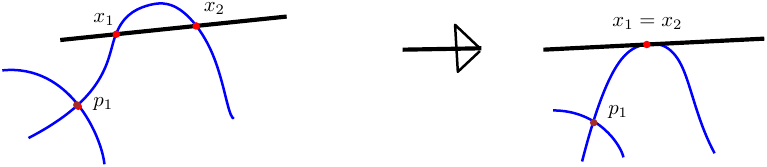}
\end{figure}

But there is an extra object that occurs. There is also the space of curves
with one node lying on the line, as shown by the following picture:

\begin{figure}[h!]\centering
\includegraphics[scale = .9]{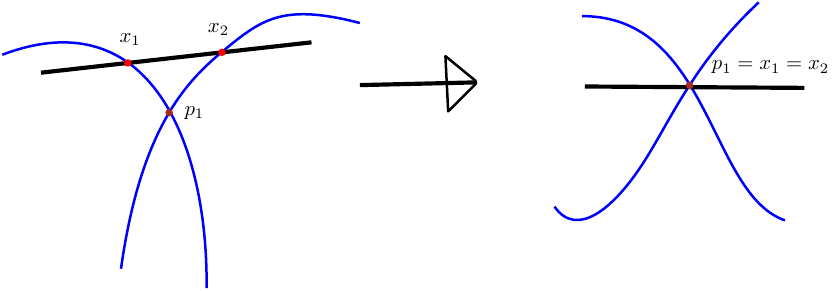}
\end{figure}

This results in an excess contribution to the intersection.
The same
thing happens if the curve has a more degenerate singularity.
We have been able to compute the excess contribution to the intersection in the following cases:
\begin{itemize}\itemsep=0pt
\item[$\bullet$] When the degree $d$ plane curve has
a node and is tangent to a given line at multiple points of any order.

\item[$\bullet$] When the degree $d$ plane curve has
a cusp and is tangent to a given line at multiple points (only tangency of order one).
\end{itemize}
When the singularity is a node,
our answers agree with those
predicted by the Caporaso--Harris formula. When the singularity is a cusp, our results are new (to the best of our knowledge).
We expect that this idea can be pursued further to enumerate curves with multiple nodes
and also enumerate singular curves tangent to a given line, when the singularities are even more degenerate
than a cusp. We hope to pursue these questions in future.

We now implement the idea which has just been described.
For that, recall a standard definition about singularities.
\begin{defn}
\label{singularity_defn}
Let $U$ be an open neighbourhood of the origin in $\mathbb{C}^2$
(open with respect to the usual topology of $\mathbb{C}^2\approx \mathbb{R}^4$ given by the
Euclidean metric)
and let $f \colon (U, \mathbf{0}) \longrightarrow (\mathbb{C}, 0)$ be a holomorphic
function. A point $q \in f^{-1}(0)$ has
an $\mathsf{A}_k$-singularity if there exists a coordinate system
$(x, y) \colon (V, \mathbf{0}) \longrightarrow \bigl(\mathbb{C}^2, \mathbf{0}\bigr)$ such that $f^{-1}(0) \cap V$ is given by
$
y^2 + x^{k+1} = 0$. An $\mathsf{A}_1$-singularity is also called a \textsf{node}, an $\mathsf{A}_2$-singularity is
called a \textsf{cusp} while an $\mathsf{A}_3$-singularity is also called a \textsf{tacnode}.
\end{defn}
Define
\[
\mathsf{M}_{n}^m :=  \mathcal{D}_1 \times \mathcal{D}_d \times \bigl(X^1 \times \cdots \times X^m\bigr) \times (X_1\times \cdots\times X_n),
\]
where each copy of $X_i$ and $X^j$ is $\mathbb{P}^2$;
the hyperplane classes are denoted by $a_i$ and $b_j$, respectively. Let $\mathsf{W}$ and
$\mathsf{S}_1, \mathsf{S}_2, \dots, \mathsf{S}_n$ be subsets of
$\mathcal{D}_1 \times \mathcal{D}_d \times \mathbb{P}^2$ (here $m=1$). We define
$\mathsf{W}  \mathsf{S}_1 \mathsf{S}_2 \dots \mathsf{S}_n \subset \mathsf{M}_n^1$
as follows:
it consists of all
$(H_1, H_d, p, x_1,\dots, x_n)$ such that
\begin{itemize}\itemsep=0pt
\item $(H_1, H_d, p) \in \mathsf{W}$,
\item $(H_1, H_d, x_i) \in \mathsf{S}_i$ for all $i \in \{1, \dots, n\}$,
\item The points $p, x_1, \dots , x_n$ are all distinct.
\end{itemize}
As before, we denote the homology class represented by the closure
by putting a square bracket, but without putting the cumbersome bar.

Next, define a few subsets of $\mathsf{M}^1_0$ (we are setting here $m$ to be equal to $1$).
First of all, we define
$\mathsf{A}_1^{\mathsf{F}} \subset \mathsf{M}^1_0$ to be the subset of all
$(H_1, H_d, p) \in \mathsf{M}^1_0$ such that
$H_d$ has a node at $p$. The letter~$\mathsf{F}$ is there to remind us that the nodal
point is free, i.e., it does not have to lie on the line.

Similarly, we define
$\mathsf{A}_1^{\mathsf{L}} \subset \mathsf{M}^1_0$ to be the subset of all
$(H_1, H_d, p) \in \mathsf{M}^1_0$ such that
\begin{itemize}\itemsep=0pt
\item The curve $H_d$ has a node at $p$.
\item The point $p$ lies on the line $H_1$.
\end{itemize}
Finally, given any nonnegative integer $r$, we define
$\mathsf{P}^{(r)}\mathsf{A}_1\!\subset\! \mathsf{M}^1_0$ to be the
subset of all
$(H_1, H_d, p)\! \in \mathsf{M}^1_0$ such that
\begin{itemize}\itemsep=0pt
\item The curve $H_d$ has a node at $p$.
\item The point $p$ lies on the line $H_1$.
\item One of the branches of the node is tangent to the line $H_1$ to order $r$.
\end{itemize}

We note that the closure of $\mathsf{P}^{(0)}\mathsf{A}_1$ is same as the
closure of $\mathsf{A}_1^{\mathsf{L}}$ \big(in $\mathsf{M}^1_0$\big).
Note that
\begin{align}
\bigl[\mathsf{A}_1^{\mathsf{L}}\bigr] &  =  \bigl[\mathsf{A}_1^{\mathsf{F}}\bigr]\cdot (y_1 + b_1);
\label{a1_line_cycle}
\end{align}
this is because intersecting with $(y_1+b_1)$ corresponds to the point $p$ lying on the line.

We now prove a few transversality results that we use to enumerate nodal curves
with tangencies.
In the following propositions,
$n$, $k_1, k_2,\dots, k_{n}$ and $r$ are nonnegative integers
and~${k := k_1+k_2 + \dots+ k_n}$.

\begin{prp}\label{prp_smth_mfld_with_PA1r_ag}
The space
\smash{$\overline{\mathsf{P}^{(r)}\mathsf{A}}_1\overline{\mathsf{T}}_{k_1}\dots \overline{\mathsf{T}}_{k_n}$}
is a smooth submanifold of $\mathsf{M}_{n}^{1}$, provided
$d \geq k+r+n+2$.
\end{prp}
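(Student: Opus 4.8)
The plan is to follow the template of the proof of Proposition~\ref{prp_smth_mfld} closely, the only genuinely new feature being the two extra conditions coming from the node at $p$. As there, I would first pull everything back along the submersion $\pi_{+}$ to the vector space of polynomials, so that it suffices to prove that the preimage $\pi_{+}^{-1}\bigl(\overline{\mathsf{P}^{(r)}\mathsf{A}}_1\overline{\mathsf{T}}_{k_1}\cdots\overline{\mathsf{T}}_{k_n}\bigr)$ is a smooth submanifold of $\mathcal{F}_1^{+}\times\mathcal{F}_d^{+}\times(X^1\times X_1\times\cdots\times X_n)$. Using charts and the coordinate change from Proposition~\ref{prp_smth_mfld}, I would realize this preimage as a fibre bundle over the configuration space of the line together with the $n+1$ marked points, placing the node point $p$ at the origin, the line along the $x$-axis, and the tangency points at distinct locations $(\mathbf{a}_1,0),\dots,(\mathbf{a}_n,0)$ with $\mathbf{a}_i\neq 0$. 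It then suffices to show the fibre, a subset of $\mathcal{F}_d^{+}$, is smooth.

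In these normalized coordinates the fibre is cut out by explicit vanishing conditions. The node at $p$ forces $f_{10}=f_{01}=0$ (so that $p$ is a singular point). The requirement that one branch be tangent to the line to order $r$ forces the further vanishing $f_{20}=f_{30}=\cdots=f_{r+1,0}=0$; indeed, on the line the tangent branch contributes contact order $r+1$ and the transverse branch contributes $1$, so together with $f_{10}=0$ the restriction $g(x):=f(x,0)$ vanishes to order $r+2$ at the origin. Finally each tangency condition $\mathsf{T}_{k_i}$ forces $g$ to vanish to order $k_i+1$ at $\mathbf{a}_i$. As in Proposition~\ref{prp_smth_mfld}, the relevant nondegeneracy conditions (Hessian nondegenerate, leading coefficients nonzero) are open, so I may work directly with the closed vanishing locus, and the task reduces to showing that $0$ is a regular value of the map $\Phi$ collecting the $2+r+\sum_i(k_i+1)=r+k+n+2$ conditions above.

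The observation that makes the differential surjective is that the single normal condition $f_{01}=0$ and all of the remaining conditions are decoupled: every condition other than $f_{01}=0$ is a condition on the one-variable restriction $g(x)=f(x,0)$, which involves only the coefficients $f_{j0}$. Thus I would hit $f_{01}$ with the variation $\eta=y$, which vanishes identically on the line and hence leaves $g$ unchanged, and hit each jet condition on $g$ by an explicit product variation of the type used in Proposition~\ref{prp_smth_mfld}, namely $\eta(x,y)=x^{s}\prod_{j\neq i}(x-\mathbf{a}_j)^{k_j+1}(x-\mathbf{a}_i)^{k_i}$ for the appropriate exponent $s$; such an $\eta$ vanishes to the required orders at all the other marked points while contributing a nonzero value to the targeted top coefficient. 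Since variations depending only on $x$ do not affect $f_{01}$, the Jacobian of $\Phi$ is block triangular of full rank, yielding the regular value, and I would then conclude, exactly as in Proposition~\ref{prp_smth_mfld}, that the total space is a smooth submanifold of $\mathsf{M}_n^1$.

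The one place where the hypothesis is genuinely used is the degree bound, and this is the main (essentially the only) obstacle. The product polynomial $x^{r+2}\prod_{i=1}^{n}(x-\mathbf{a}_i)^{k_i+1}$ realizing the exact configuration on the line has degree $r+2+\sum_i(k_i+1)=r+k+n+2$, and it must arise as the restriction $f(x,0)$ of an honest degree-$d$ curve. Requiring $d\geq k+r+n+2$ is precisely what guarantees that such polynomials of degree $\leq d$ exist, so that the stratum is non-empty and the interpolating variations $\eta$ above can be chosen within $\mathcal{F}_d^{+}$ with the required nonzero leading coefficients; this is where the extra additive constants relative to Proposition~\ref{prp_smth_mfld} enter. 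As emphasized in the introduction, the bound is sufficient but not optimal.
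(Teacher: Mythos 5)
Your proposal is correct and follows essentially the same route as the paper: the paper likewise reduces to the affine picture with the line as the $x$-axis, realizes $\overline{\mathsf{P}^{(r)}\mathsf{A}}_1\overline{\mathsf{T}}_{k_1}\dots \overline{\mathsf{T}}_{k_n}$ as the locus cut out inside the smooth manifold $\overline{\mathsf{T}}_{r+1}\overline{\mathsf{T}}_{k_1}\dots \overline{\mathsf{T}}_{k_n}$ of Proposition~\ref{prp_smth_mfld} by the single decoupled condition $f_{01}=0$, and checks transversality of that condition with a variation of the form $\eta = y\cdot(\text{product of }x\text{-factors})$, the degree bound entering exactly where you say it does. (Your condition count omits $f_{00}=0$ and should read $r+k+n+3$, but this is cosmetic since your requirement that $g$ vanish to order $r+2$ at the origin already includes it.)
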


\begin{prp}
\label{prp_smth_mfld_with_A1f_ag}
The space
\smash{$\overline{\mathsf{A}^{\mathsf{F}}_1}\overline{\mathsf{T}}_{k_1}\dots \overline{\mathsf{T}}_{k_n}$}
is a smooth submanifold of $\mathsf{M}_{n}^{1}$, provided $d \geq k+n+1$.
\end{prp}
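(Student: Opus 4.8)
The plan is to adapt the fibre-bundle/regular-value argument from the proof of Proposition~\ref{prp_smth_mfld}, enlarging it to carry the three extra linear conditions imposed at the node. First I would pass to the vector space of polynomials: writing $\pi_+\colon \mathcal{F}_1^+\times\mathcal{F}_d^+\times X^1\times(X_1\times\cdots\times X_n)\to \mathsf{M}_n^1$ for the analogous submersion, it suffices to prove that $\pi_+^{-1}\bigl(\overline{\mathsf{A}^{\mathsf{F}}_1}\overline{\mathsf{T}}_{k_1}\cdots\overline{\mathsf{T}}_{k_n}\bigr)$ is a smooth submanifold. As before, this space fibres over the base recording the line $H_1$, the node position $p\in X^1$, and the distinct tangency positions $x_1,\dots,x_n$ on $H_1$, the fibre being cut out of $\mathcal{F}_d^+$ by linear conditions. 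Taking $H_1$ to be the $x$-axis, the whole argument then reduces to showing that $0$ is a regular value of the linear evaluation map $\Phi$ whose components are the node functionals $f\mapsto f(p),\,f_x(p),\,f_y(p)$ together with the tangency functionals $f\mapsto \partial_x^{\,j}f(x_i)$ for $0\le j\le k_i$ and $1\le i\le n$.

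Since $\Phi$ is linear, regularity is precisely the linear independence of these $3+k+n$ functionals, which I would establish inductively in the spirit of Proposition~\ref{prp_smth_mfld}. The three node functionals alone are independent (they are the $0$th and $1$st order jet of $f$ at $p$), which already gives that $\overline{\mathsf{A}^{\mathsf{F}}_1}$ is smooth; I would then adjoin the tangency functionals one derivative at a time and show each new one lies outside the span of those already imposed. Writing $x_i=(a_i,0)$ and $p=(c,e)$, to separate the functional $\partial_x^{\,j}f(x_i)$ I would exhibit a polynomial in the common kernel of all the others with $\partial_x^{\,j}\eta(x_i)\ne 0$, namely
\[
\eta(x,y)=\Bigl(\prod_{l\ne i}(x-a_l)^{k_l+1}\Bigr)(x-a_i)^{\,j}\,\rho(x,y),
\]
where $\rho$ vanishes to order two at $p$ but is nonzero at $x_i$; one takes $\rho=(y-e)^2$ when $p$ lies off the line $(e\ne 0)$ and $\rho=(x-c)^2$ when $p$ lies on the line $(e=0)$. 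The product of linear factors kills every tangency condition at the remaining points and the lower ones at $x_i$, the factor $\rho$ kills all three node conditions, and $\partial_x^{\,j}\eta(x_i)\ne 0$ by construction. Each node functional is separated analogously by $\bigl(\prod_l(x-a_l)^{k_l+1}\bigr)g$ with $g$ of degree $\le 1$ chosen so that the induced jet $(\eta(p),\eta_x(p),\eta_y(p))$ realises the desired standard basis vector (the map $g\mapsto(g(p),g_x(p),g_y(p))$ being surjective, and $\prod_l(x-a_l)^{k_l+1}$ being nonzero at $p$).

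The degree bookkeeping is where the hypothesis enters. The tangency-separating polynomials have degree $(k+n)-(k_i+1)+j+2\le k+n+1$, with equality at the top derivative $j=k_i$, while the node-separating polynomials have degree $(k+n)+1=k+n+1$; all of these genuinely lie in $\mathcal{F}_d^+$ exactly when $d\ge k+n+1$, which is the stated bound. I would also record the standard fact, used implicitly, that $\overline{\mathsf{A}^{\mathsf{F}}_1}$ coincides with the incidence variety of pairs $(H_d,p)$ for which $p$ is a singular point of $H_d$ (the genuine nodal locus being open and dense in this irreducible variety), so that the three node functionals really do cut out the closure.

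The main obstacle is precisely the regular-value step: one must control the off-line node jet at $p$ simultaneously with the on-line tangency jets at the $x_i$, and do so using polynomials of degree at most $d$. This forces a case split according to whether $p\in H_1$ or $p\notin H_1$, and it is exactly the second-order vanishing required of $\rho$ (resp.\ the degree-one factor $g$) at $p$ that pushes the necessary degree up to $k+n+1$, as opposed to the bound $k+n$ in the purely tangential Proposition~\ref{prp_smth_mfld}.
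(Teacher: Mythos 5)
Your proposal follows essentially the same route as the paper's proof: pass to the affine space of polynomials, reduce everything to a regular-value statement for evaluation functionals, and witness surjectivity with polynomials of the form $\bigl(\prod_l (x-\mathbf{a}_l)^{k_l+1}\bigr)\cdot\rho$, where $\rho$ vanishes to second order at the node, with the same case split according to whether the node lies on the line and the same degree count $k+n+1$. The one organisational difference is that the paper proceeds by an iterated codimension-one argument (first $\overline{\mathsf{A}^{\mathsf{F}}_1}$, then adjoining one tangency derivative at a time, each variation $f+t\eta$ being required to stay inside the previously constructed manifold), whereas you assert simultaneous linear independence of all $3+k+n$ functionals. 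The two formulations are equivalent, but yours additionally obliges you to separate the three node functionals from the tangency functionals, a step the paper never has to perform.

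That extra step is where your argument has a concrete gap. The node-separating polynomials $\bigl(\prod_l(x-\mathbf{a}_l)^{k_l+1}\bigr)g$ rely on the product being nonzero at $p=(c,e)$, but the product depends only on $x$, so it vanishes at $p$ whenever $c=\mathbf{a}_l$ for some $l$. This is a legitimate configuration when $e\neq 0$: the node sits off the line, directly above a tangency point, and all marked points remain distinct. In that case the $1$-jet of your polynomial at $p$ lies in a proper subspace of $\mathbb{C}^3$ (and vanishes entirely once $k_l\geq 1$), so surjectivity onto the node jet fails. The repair is cheap and stays within your degree budget: when $e\neq 0$, every tangency functional is an $x$-derivative evaluated on the line $y=0$, so any polynomial divisible by $y$ annihilates all of them, and $g\mapsto y\,g$ with $g$ affine already realises an arbitrary $1$-jet at $p$, since the map $g\mapsto\bigl(e\,g(p),\,e\,g_x(p),\,g(p)+e\,g_y(p)\bigr)$ is onto for $e\neq 0$; this uses only degree $2\leq k+n+1$. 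When $e=0$, distinctness of the marked points forces $c\neq \mathbf{a}_l$ for all $l$ and your original choice works. Alternatively, adopting the paper's iterated formulation sidesteps the issue altogether, since there one only ever needs variations preserving the node conditions, never the reverse separation.
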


\begin{proof}[Proof of Proposition \ref{prp_smth_mfld_with_PA1r_ag}]
We show that the space \smash{$\overline{\mathsf{P}^{(r)}\mathsf{A}}_1\overline{\mathsf{T}}_{k_1}\dots \overline{\mathsf{T}}_{k_n}$} is a codimension one submanifold of
$\overline{\mathsf{T}}_{r+1}\overline{\mathsf{T}}_{k_1}\dots \overline{\mathsf{T}}_{k_n}$.
The proof is very similar to
that of Proposition \ref{prp_smth_mfld}, where we
show that
$\overline{\mathsf{T}}_{r+2}\overline{\mathsf{T}}_{k_1}\dots \overline{\mathsf{T}}_{k_n}$
is a smooth codimension one submanifold
of $\overline{\mathsf{T}}_{r+1}\overline{\mathsf{T}}_{k_1}\dots \overline{\mathsf{T}}_{k_n}$.

We switch to affine space at set the $\overline{\mathsf{T}}_{r+1}$ point to be the origin
(and the line to be the $x$-axis). To show that
$\overline{\mathsf{T}}_{r+2}\overline{\mathsf{T}}_{k_1}\dots \overline{\mathsf{T}}_{k_n}$
is a smooth submanifold, we show that the section induced by taking the $(r+2)$-th
derivative (i.e., $f_{r+2,0}$) is transverse to zero. The procedure for doing that
was as follows: we considered the polynomial $\eta$, given by
\[ \eta(x,y):= (x-\mathbf{a}_1)^{k_1+1} \cdots (x-\mathbf{a}_{n-1})^{k_{n-1}+1} (x-\mathbf{a}_{n})^{k_{n}+1} (x-0)^{r+2}. \]
Here $(\mathbf{a}_i, 0)$ is the point at which the degree $d$ curve is tangent to the
$x$-axis to order $k_i$. Using this polynomial $\eta$, we are able to construct
a tangent vector, such that the differential of the section (that is induced by taking the
$(r+2)$-th derivative) evaluated on this tangent vector is nonzero. This
proves transversality.

In a similar way, we can show that the section
induced by taking the first derivative in the $y$ direction (i.e., $f_{01}$) is transverse
to zero. To do that, we define the polynomial $\eta$, given by
\[ \eta(x,y):= (x-\mathbf{a}_1)^{k_1+1} \cdots (x-\mathbf{a}_{n-1})^{k_{n-1}+1}(x-\mathbf{a}_{n})^{k_{n}+1} (x-0)^{r+1} (y-0). \]
Using this $\eta$, we can use a similar argument to compute the differential and prove transversality.
Hence,
$\overline{\mathsf{P}^{(r)}\mathsf{A}}_1\overline{\mathsf{T}}_{k_1}\dots \overline{\mathsf{T}}_{k_n}$
is a smooth codimension one submanifold of
$\overline{\mathsf{T}}_{r+1}\overline{\mathsf{T}}_{k_1}\dots \overline{\mathsf{T}}_{k_n}$.
Note that the bound on $d$ is required because to apply our argument, we need
$\overline{\mathsf{T}}_{r+1}\overline{\mathsf{T}}_{k_1}\dots \overline{\mathsf{T}}_{k_n}$
to be a~smooth manifold; this where the bound on $d$ is required
(which is bigger than what is required to simply construct the given $\eta$).
\end{proof}

\begin{proof}[Proof of Proposition \ref{prp_smth_mfld_with_A1f_ag}]
Start with $n = 0$, i.e., we show that \smash{$\overline{\mathsf{A}^{\mathsf{F}}}_1$}
is a smooth submanifold of $\mathsf{M}^1_0$ of codimension three.
The assertion is proved in \cite[pp.~216--217]{McSa},
but for the convenience of the reader, we include the proof here.
Switching to affine space, we consider the map
\[
\varphi \colon\ \mathcal{F}_d^{+} \times \mathbb{C}^2  \longrightarrow  \mathbb{C}^3, \qquad
(f, (x, y)) \longmapsto (f(x, y), f_x(x, y), f_y(x, y)).
\]
It suffices to show that $(0,0,0)$ is a regular value of $\varphi$.
Suppose $\varphi(f, (a,b)) =0$. We need to
show that the differential of $\varphi$, evaluated at $(f, (a,b))$ is surjective.
Consider the polynomials~$\eta_{ij}(x, y)$ given by
$
\eta_{00}(x, y) := 1$, $ \eta_{10}(x, y) := (x-a)$, and $\eta_{01}(x, y) := (y-b)$.
Let $\gamma_{ij}(t)$ be the curve given by
$\gamma_{ij}(t) :=  (f+t \eta_{ij}, (x,y))$.
We now note that
\begin{align}
\{{\rm d}\varphi|_{(f, (a,b))}\}(\gamma_{00}^{\prime}(0)) & = (1,0,0), \qquad
\{{\rm d}\varphi|_{(f, (a,b))}\}(\gamma_{10}^{\prime}(0)) = (0,1,0)
\qquad \textnormal{and} \nonumber \\
\{{\rm d}\varphi|_{(f, (a,b))}\}(\gamma_{01}^{\prime}(0)) & = (0,0,1). \nonumber
\end{align}
This shows that the differential is surjective.

Next, assume that $n = 1$ and $k_1 = 0$. We show that
\smash{$\overline{\mathsf{A}^{\mathsf{F}}_1}\overline{\mathsf{T}}_{0}$} is a smooth
submanifold of $\mathsf{M}_{1}^{1}$.
We switch to affine space and set the designated line to be the $x$-axis.
Define \smash{$\bigl(\overline{\mathsf{A}^{\mathsf{F}}_1\bigr)}_{\mathsf{Aff}}$} to be
\begin{align}
\bigl(\overline{\mathsf{A}^{\mathsf{F}}_1\bigr)}_{\mathsf{Aff}}& :=
\big\{(f, (x,y)) \in \mathcal{F}_d^+\times \mathbb{C}^2 \mid  f(x, y) = 0,\, f_x(x,y) = 0,\, f_y(x,y) = 0\big\}.
\label{A1F_affine}
\end{align}
Consider the map
\smash{$\psi \colon \bigl(\overline{\mathsf{A}^{\mathsf{F}}_1\bigr)}_{\mathsf{Aff}}
 \longrightarrow \mathbb{C}$}, given by
defined by
$(f, (x, y) \longmapsto  f(0, 0)$.
We need to show if $\psi(f, (a, b)) = 0$ and
$(a,b)\neq (0,0)$, then the differential of $\psi$
is surjective. Note that we are setting here the $\mathsf{T}_0$
point to be the origin. Let $\gamma (t)$ be the curve in
\smash{$\bigl(\overline{\mathsf{A}^{\mathsf{F}}_1\bigr)}_{\mathsf{Aff}}$}
given by~$ {\gamma(t) :=  (f+t \eta, (a,b))}$,
where is $\eta$ is as yet unspecified. We note that
$\{{\rm d}\psi|_{(f, (a,b))}\}(\gamma^{\prime}(0)) = \eta(0,0)$.
We now see what is our requirement on $\eta$.
First, we need that $\eta(0, 0) \neq 0$.
Second of all, we need the curve $\gamma(t)$ to lie in
\smash{$\bigl(\overline{\mathsf{A}^{\mathsf{F}}_1\bigr)}_{\mathsf{Aff}}$}.
For this, it is sufficient if the value of $\eta$ and both its
first partial derivatives evaluated at $(a, b)$ are equal to zero.
Define $\theta(x, y)$ as follows:
\begin{align}
\label{theta_defn}
\theta(x, y) := \begin{cases} (y-b)^2 &\text{if} \  b  \neq 0, \\
              (x-a)^2 & \text{if} \  b = 0, \  a \neq 0.
       \end{cases}
\end{align}
Define $\eta(x, y) := \theta(x, y)$. It is a simple check to see that $\eta$ satisfies
all the required conditions. Note that we separately define $\theta$, because it is used
later on for further purposes (in the subsequent
proofs, our definition of $\eta$ keep changing by multiplying
$\theta$ with appropriate factors).

Hence, we have shown that
\smash{$\overline{\mathsf{A}^{\mathsf{F}}_1}\overline{\mathsf{T}}_{0}$} is a smooth
submanifold of $\mathsf{M}_{1}^{1}$. We now show that
\smash{$\overline{\mathsf{A}^{\mathsf{F}}_1}\overline{\mathsf{T}}_{k_1}$} is a smooth
submanifold of $\mathsf{M}_{1}^{1}$. We show that
\smash{$\overline{\mathsf{A}^{\mathsf{F}}_1}\overline{\mathsf{T}}_{k_1}$} is a smooth submanifold
of \smash{$\overline{\mathsf{A}^{\mathsf{F}}_1}\overline{\mathsf{T}}_{k_1-1}$} of codimension one.
The proof is as before; the required curve is given by
$\eta(x, y) := \theta(x,y)x^{k_1+1}$.
This proves the claim when $n = 1$.

Finally, suppose there are more than one point of tangency (in addition to the nodal point).
The nodal point is at $(a,b)$ and suppose that the tangency points are at
$(\mathbf{a}_1, 0), (\mathbf{a}_2, 0), \dots,\allowbreak (\mathbf{a}_{n-1}, 0)$
and $(0,0)$. The assertion is proved by considering the polynomial
\[ \eta(x,y) := \theta(x,y)(x-\mathbf{a}_1)^{k_1+1} \cdots (x-\mathbf{a}_{n-1})^{k_{n-1}+1} (x-0)^{k_n}. \]
This completes the proof.
 \end{proof}

We are now ready to state and prove the results about enumerating $1$-nodal curves with tangencies.
In the following theorems, it is always be understood that whenever there is a~collection of
numbers $k_1, k_2, \dots, k_n$, then $k$ is defined to be
$k := \sum_{i=1}^n k_i$.

We also recall the projection maps and the diagonal
subspaces that we will be encountering. We denote
$\pi \colon \mathsf{M}^1_{n+1} \longrightarrow \mathsf{M}^1_n$
to be
the projection that forgets the last marked point.
We also denote
$\pi_{n+1} \colon \mathsf{M}^1_{n+1} \longrightarrow \mathsf{M}_1$
to be the map that forgets all the marked points, except the last one.
Next, for any $1 \leq i \leq n$, we denote
$\Delta_{i, n+1}$ to be the following subset of
$\mathsf{M}_{n+1}^1$:
It is the locus of all $(H_1, H_d, p, x_1, \dots, x_{n}, x_{n+1})$
such that $x_i = x_{n+1}$.
Finally, we denote
$\Delta_{n+1}^{1}$
to be the following subset of
$\mathsf{M}_{n+1}^1$:
It is the locus of all $(H_1, H_d, p, x_1, \dots, x_{n}, x_{n+1})$
such that $p = x_{n+1}$.

\begin{thm}\label{theorem_for_many_Tks_A1F}
Let
$n, k_1, k_2, \dots , k_n$ be nonnegative integers.
Then the following equality of homology classes in $H_*\bigl(\mathsf{M}_{{n+1}}^1; \mathbb{R}\bigr)$ holds:
\begin{align}
\pi^{\ast} \bigl[\mathsf{A}_1^{\mathsf{F}} \mathsf{T}_{k_1} \dots \mathsf{T}_{k_n}\bigr]\cdot
\pi_{n+1}^{\ast}[\mathsf{T}_0]  ={}&
\bigl[\mathsf{A}_1^{\mathsf{F}} \mathsf{T}_{k_1}\dots \mathsf{T}_{k_n} \mathsf{T}_{0}\bigr]\nonumber\\
&+ \sum_{i=1}^{n}(1+k_i) \pi^{\ast} \bigl[\mathsf{A}_1^{\mathsf{F}}
\mathsf{T}_{k_1}\dots \mathsf{T}_{k_n}\bigr] \cdot [\Delta_{i, {n+1}}],\label{many_Tks_A1F}
\end{align}
provided $d > k+n+1$.
\end{thm}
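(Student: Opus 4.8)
The plan is to run the proof of Theorem~\ref{theorem_for_many_Tks} essentially verbatim, carrying the free nodal point $p$ along as a passive spectator. First I would check \eqref{many_Tks_A1F} on the set-theoretic level. The first factor $\pi^{\ast}\bigl[\mathsf{A}_1^{\mathsf{F}}\mathsf{T}_{k_1}\dots\mathsf{T}_{k_n}\bigr]$ is the closure of the locus of $(H_1, H_d, p, x_1,\dots, x_{n+1})$ with a node at $p$, with tangency of order $k_i$ to $H_1$ at $x_i$, and with $x_{n+1}$ free; the second factor $\pi_{n+1}^{\ast}[\mathsf{T}_0]$ forces $x_{n+1}\in H_1\cap H_d$ while leaving $p, x_1,\dots, x_n$ free. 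Intersecting the two, the point $x_{n+1}$ is either distinct from $p, x_1,\dots, x_n$---this is the open part, whose closure is $\bigl[\mathsf{A}_1^{\mathsf{F}}\mathsf{T}_{k_1}\dots\mathsf{T}_{k_n}\mathsf{T}_0\bigr]$---or it collides with a tangency point $x_i$, landing in $\Delta_{i,{n+1}}$, or it collides with the node $p$, landing in $\Delta_{n+1}^1$.

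The one genuinely new point, and the step I expect to be the crux, is to show that the node-collision locus $\{x_{n+1}=p\}$ contributes nothing to \eqref{many_Tks_A1F}. I would settle this by a dimension count. Along $\Delta_{i,{n+1}}$ the tangency condition already forces $x_i\in H_1\cap H_d$, so the incidence imposed by $\pi_{n+1}^{\ast}[\mathsf{T}_0]$ at $x_{n+1}=x_i$ is automatic, and this locus has the same complex dimension $\delta_d+1-k$ as the open part. When $x_{n+1}=p$, however, only $x_{n+1}\in H_d$ is automatic (since $p$ is a node), while $x_{n+1}\in H_1$ imposes the genuinely new condition $p\in H_1$; thus the node-collision locus is cut out of $\pi^{\ast}\bigl[\mathsf{A}_1^{\mathsf{F}}\mathsf{T}_{k_1}\dots\mathsf{T}_{k_n}\bigr]$ by $\Delta_{n+1}^1$ together with $p\in H_1$, and so has dimension $\delta_d-k$, one less than the open part. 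Being of strictly smaller dimension, it carries no fundamental class in degree $2(\delta_d+1-k)$ and hence drops out of the equation of homology classes---equivalently, exactly as in the discussion following \eqref{T0T0_collides_T1_aagg}, it gives an empty intersection once we intersect with any class of complementary dimension.

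It then remains to justify the two surviving terms on the level of homology, precisely as in Theorem~\ref{theorem_for_many_Tks}. For the first term I would apply Proposition~\ref{prp_smth_mfld_with_A1f_ag} with the $n+1$ tangency markings of orders $k_1,\dots,k_n,0$: the hypothesis $d>k+n+1$, i.e., $d\geq k+(n+1)+1$, guarantees that $\overline{\mathsf{A}^{\mathsf{F}}_1}\overline{\mathsf{T}}_{k_1}\dots\overline{\mathsf{T}}_{k_n}\overline{\mathsf{T}}_0$ is a smooth submanifold, so the intersection computing $\bigl[\mathsf{A}_1^{\mathsf{F}}\mathsf{T}_{k_1}\dots\mathsf{T}_{k_n}\mathsf{T}_0\bigr]$ is transverse and recovers the open-part contribution. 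For the diagonal terms I would repeat the local multiplicity computation of Theorem~\ref{theorem_for_many_Tks}: placing $x_i=x_{n+1}$ at the origin with $H_1$ the $x$-axis, the curve is tangent there to order $k_i$, the node at $p$ sits at a distinct point and plays no role, and the evaluation map $f(x,0)$ vanishes to order $k_i+1$ (using $f_{k_i+1,0}\neq 0$, valid under generic point constraints). This produces the coefficient $1+k_i$ on each $\Delta_{i,{n+1}}$ and completes the proof.
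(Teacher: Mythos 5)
Your proposal is correct and follows the same route as the paper, whose proof of this theorem is literally the one line ``this is a straightforward generalization of Theorem~\ref{theorem_for_many_Tks}; the proof is identical.'' You in fact supply the one detail the paper leaves implicit---that the collision locus $\{x_{n+1}=p\}$ drops out because the free node does not lie on $H_1$, so the incidence $x_{n+1}\in H_1$ cuts that locus down to codimension one below the expected dimension---and your dimension count is the right justification, consistent with the contrast to Theorem~\ref{theorem_for_many_tks_ag_pakL}, where the distinguished point does lie on the line and an extra $\bigl[\Delta_{n+1}^{1}\bigr]$ term duly appears.
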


\begin{proof} This is simply a straightforward generalization of Theorem \ref{theorem_for_many_Tks}; the proof is identical.
\end{proof}

Next, we generalize Theorem \ref{Tk1Tk2_etc_alt_way}.

\begin{thm}
\label{Tk1Tk2_etc_alt_way_A1F}
Let $n$ be a positive integer, $k_1, k_2, \dots , k_{n-1}$ nonnegative integers
and $k_n$ a~positive integer.
Define
\begin{align*}
\mathsf{m}_{k_n} := \begin{cases} 2 & \text{if} \ k_n=1, \\
1 & \text{if} \  k_n > 1.
\end{cases}
\end{align*}
Then the following equality of homology classes in $H_*\bigl(\mathsf{M}_{{n+1}}^1; \mathbb{R}\bigr)$ holds:
\begin{gather}
\bigl[\mathsf{A}_1^{\mathsf{F}} \mathsf{T}_{k_1} \dots \mathsf{T}_{k_{n-1}} \mathsf{T}_{k_n-1} \mathsf{T}_0\bigr] \cdot
\bigl[\Delta_{n, {n+1}}^{\mathsf{L}}\bigr]  = \pi^*\bigl[\mathsf{A}_1^{\mathsf{F}} \mathsf{T}_{k_1} \dots \mathsf{T}_{k_n}\bigr]\cdot [\Delta_{n, n+1}]   \nonumber \\
\phantom{ \qquad =}{} + \mathsf{m}_{k_n} \pi^{\ast} \bigl[\mathsf{P}^{(k_n-1)}\mathsf{A}_1 \mathsf{T}_{k_1} \dots \mathsf{T}_{k_{n-1}}\bigr]
\cdot [\Delta_{n,n+1}]\cdot \bigl[\Delta^1_{n+1}\bigr], \label{Tmu_T0_nu_alt_way_A1F_aagg_Cycle}
\end{gather}
provided $d > k+n$.
\end{thm}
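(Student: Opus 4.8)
The plan is to run the argument of Theorem~\ref{Tk1Tk2_etc_alt_way} in the presence of the free node, the only genuinely new phenomenon being that the collision of $x_n$ with $x_{n+1}$ can now take place \emph{at} the node, which is what produces the $\mathsf{P}^{(k_n-1)}\mathsf{A}_1$ term. First I would apply the collision lemma (Lemma~\ref{CL_ver2}) to the pair $(x_n,x_{n+1})$, so that the left-hand side becomes the diagonal locus $\{x_n=x_{n+1}\}$ inside the closure $\overline{\mathsf{A}_1^{\mathsf{F}}\mathsf{T}_{k_1}\cdots\mathsf{T}_{k_{n-1}}\mathsf{T}_{k_n-1}\mathsf{T}_0}$. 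Set-theoretically this diagonal splits into two pieces according to whether the collision point $q=x_n=x_{n+1}$ is distinct from the node $p$ or equal to it. Exactly as in the proofs of Proposition~\ref{prp_smth_mfld} and Theorem~\ref{Tk1Tk2_etc_alt_way}, the node is a spectator for the local analysis at $q$, so whenever $q\neq p$ the collision of a $\mathsf{T}_{k_n-1}$ point with a $\mathsf{T}_0$ point produces a $\mathsf{T}_{k_n}$ point; this recovers the first term $\pi^*[\mathsf{A}_1^{\mathsf{F}}\mathsf{T}_{k_1}\cdots\mathsf{T}_{k_n}]\cdot[\Delta_{n,n+1}]$, the multiplicity being $1$ by the same order-of-vanishing computation that yields \eqref{ft_k0_t}.

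The substance lies in the second piece, where $q=p$. Here the two intersection points sit on the line through the node, so $p$ automatically lies on the line; matching contact orders—the tangent branch contributes contact $k_n$, the transverse branch contributes contact $1$, for a total of $k_n+1$, which is precisely the combined contact of a $\mathsf{T}_{k_n-1}$ and a $\mathsf{T}_0$—identifies this stratum with $\mathsf{P}^{(k_n-1)}\mathsf{A}_1$, giving the second term $\pi^*[\mathsf{P}^{(k_n-1)}\mathsf{A}_1\mathsf{T}_{k_1}\cdots\mathsf{T}_{k_{n-1}}]\cdot[\Delta_{n,n+1}]\cdot[\Delta^1_{n+1}]$. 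To pin down the multiplicity I would place the node at the origin with the line as the $x$-axis and examine how the ordered pair $(x_n,x_{n+1})$ can approach $p$ along the two branches. When $k_n>1$ the branches are of different type—one tangent to the line to order $k_n-1\ge 1$, the other transverse—so the $\mathsf{T}_{k_n-1}$ point is forced onto the tangent branch and the $\mathsf{T}_0$ point onto the transverse branch; there is a single local sheet of the closure through the stratum and the multiplicity is $1$. When $k_n=1$ both marked points are transverse and both branches meet the line transversally, so there are two ordered assignments of $(x_n,x_{n+1})$ to the branches; writing the node at height $\beta$ above the line produces two intersection points $t_1,t_2$ with $t_1-t_2$ of order $\beta$, and the two labelings $(t_1,t_2)$ and $(t_2,t_1)$ yield two distinct smooth local sheets of the closure through the node-stratum, each meeting $\{x_n=x_{n+1}\}$ to first order. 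This is the source of $\mathsf{m}_{k_n}$, which is therefore $2$ when $k_n=1$ and $1$ when $k_n>1$.

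As in Theorem~\ref{Tk1Tk2_etc_alt_way}, the set-theoretic statement needs both directions—that every collision lands in one of the two strata, and conversely that every point of each stratum is a genuine limit of open configurations—and I would establish the converse by an explicit one-parameter family of the type underlying \eqref{ft_k0_t}, using that (by Proposition~\ref{prp_smth_mfld_with_A1f_ag}) the relevant derivative coordinate is an honest coordinate on the smooth stratum, so that the nearby curves can be chosen to retain both the node and the spectator tangencies. Finally, to upgrade the set-theoretic decomposition to the stated equality of homology classes I would invoke the smoothness results proved above: Proposition~\ref{prp_smth_mfld_with_A1f_ag} for $\overline{\mathsf{A}_1^{\mathsf{F}}\mathsf{T}_{k_1}\cdots\mathsf{T}_{k_n}}$ and $\overline{\mathsf{A}_1^{\mathsf{F}}\mathsf{T}_{k_1}\cdots\mathsf{T}_{k_n-1}\mathsf{T}_0}$, and Proposition~\ref{prp_smth_mfld_with_PA1r_ag} (with $r=k_n-1$) for the node-stratum $\overline{\mathsf{P}^{(k_n-1)}\mathsf{A}_1\mathsf{T}_{k_1}\cdots\mathsf{T}_{k_{n-1}}}$; the hypothesis $d>k+n$ is exactly the bound these propositions require for all the strata that occur.

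The main obstacle is the multiplicity computation at the node, and in particular the jump from $1$ to $2$ as $k_n$ drops to $1$. The delicate point is that, working in the \emph{ordered} configuration space, the closure acquires a second local sheet through the node-stratum precisely when the two colliding points are of the same type and both branches are transverse; recognising that this is genuinely a doubling (two smooth sheets crossing transversally, rather than a single sheet meeting the diagonal to second order) and that it disappears once the branches become distinguishable is the crux of the argument.
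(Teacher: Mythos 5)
Your proposal follows essentially the same route as the paper: intersect with $\Delta_{n,n+1}^{\mathsf L}$ via the collision lemma, split the diagonal locus according to whether the collision point is a smooth point of the curve or coincides with the node, identify the latter stratum with $\mathsf{P}^{(k_n-1)}\mathsf{A}_1$ by matching contact orders, and obtain $\mathsf{m}_{k_n}$ from the number of local sheets of the closure through the node stratum (two interchangeable transverse branches when $k_n=1$, a single distinguishable configuration when $k_n>1$), exactly as the paper's explicit local computations with the nondegenerate Hessian confirm. The only difference is that your multiplicity argument is a geometric sketch where the paper solves the node equations explicitly, but the content and the appeal to Propositions \ref{prp_smth_mfld_with_A1f_ag} and \ref{prp_smth_mfld_with_PA1r_ag} for transversality are the same.
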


\begin{rem}
 Let us see how to extract numbers from this.
On the $(n+1)$-pointed space $\mathsf{M}_{n}^1$, let $\alpha$ and $\beta$ be the following classes:
\begin{align*}
\alpha& := y_1^{r} y_d^s b_1^{\nu_{1}}a_1^{\varepsilon_{1}} \dots a_n^{\varepsilon_n}
\qquad \textnormal{and} \qquad \beta := y_1^{r} y_d^s b_1^{\nu_{1}+\varepsilon_n}
a_1^{\varepsilon_{1}} \dots a_{n-1}^{\varepsilon_{n-1}}.
\end{align*}
Intersecting both sides of equation \eqref{Tmu_T0_nu_alt_way_A1F_aagg_Cycle}
with $\alpha$, gives us
\begin{align*}
&\bigl[\mathsf{A}_1^{\mathsf{F}} \mathsf{T}_{k_1} \dots \mathsf{T}_{k_{n-1}} \mathsf{T}_{k_n-1} \mathsf{T}_0\bigr] \cdot
\bigl[\Delta_{n, {n+1}}^{\mathsf{L}}\bigr]\cdot \alpha \\
&\qquad{} = \bigl[\mathsf{A}_1^{\mathsf{F}} \mathsf{T}_{k_1} \dots \mathsf{T}_{k_n}\bigr] \cdot \alpha
+ \mathsf{m}_{k_n} \bigl[\mathsf{P}^{(k_n-1)}\mathsf{A}_1 \mathsf{T}_{k_1} \dots \mathsf{T}_{k_{n-1}}\bigr]
\cdot \beta.
\end{align*}
\end{rem}

\begin{proof}[Proof of Theorem \ref{Tk1Tk2_etc_alt_way_A1F}]
We first prove the special case where
$n = 1$ and $k_n = 1$.
In that case, equation \eqref{Tmu_T0_nu_alt_way_A1F_aagg_Cycle} simplifies to
\begin{align}
\bigl[\mathsf{A}_1^{\mathsf{F}} \mathsf{T}_{0}\mathsf{T}_0\bigr] \cdot \bigl[\Delta_{12}^{\mathsf{L}}\bigr] & =
\pi^*\bigl[\mathsf{A}_1^{\mathsf{F}} \mathsf{T}_{1}\bigr] \cdot [\Delta_{12}] +
2 \pi^*\bigl[\mathsf{A}_1^{\mathsf{L}}\bigr]\cdot [\Delta_{12}]\cdot \bigl[\Delta_{2}^1\bigr].
\label{A1FT0T0_coll_cycle_ver}
\end{align}
The proof of \eqref{A1FT0T0_coll_cycle_ver} builds on what was already shown in
Theorem \ref{Tk1Tk2_etc_alt_way}, namely when two $\mathsf{T}_0$ points collide the first derivative along the line
vanishes. Now there are two possibilities. The first one is that the limiting point is a smooth point of the curve.
This corresponds to the locus~$\mathsf{A}_1^{\mathsf{F}}\mathsf{T}_1$. There is another possibility that
the limiting point is a singular point of the curve. This corresponds to the locus
$\mathsf{A}_1^{\mathsf{L}}$. On the set-theoretic level, this argument shows that the left-hand side of
\eqref{A1FT0T0_coll_cycle_ver} is a subset of its right-hand side. To show that the right-hand side is a~subset of
the left-hand side, we need to show that every element of
$\mathsf{A}_1^{\mathsf{F}}\mathsf{T}_1$ and $\mathsf{A}_1^{\mathsf{L}}$ can be obtained as a~limit of elements
in $\mathsf{A}_1^{\mathsf{F}} \mathsf{T}_{0}\mathsf{T}_0$. It was shown in the proof of Theorem
\ref{Tk1Tk2_etc_alt_way} that every element of~$\mathsf{T}_1$ arises as a limit of elements in
$\mathsf{T}_{0}\mathsf{T}_0$. To complete the proof here, it is enough to show that
every element of
$\mathsf{A}_1^{\mathsf{F}}\mathsf{T}_1$ can be obtained as a limit of elements
in $\mathsf{A}_1^{\mathsf{F}} \mathsf{T}_{0}\mathsf{T}_0$. The argument for it is the same as how we showed
(at the end of the proof of Theorem \ref{Tk1Tk2_etc_alt_way}) that
every element of $\mathsf{T}_2\mathsf{T}_1 $
arises as a limit of elements of $\mathsf{T}_2 \mathsf{T}_0 \mathsf{T}_0$.
The crucial fact that was used there is that~$f_{10}$ is indeed a local coordinate
on the space $\mathsf{T}_2 \overline{\mathsf{T}}_0$.
The proof of Proposition \ref{prp_smth_mfld_with_A1f_ag} shows that
$f_{10}$ is a local coordinate on
$\mathsf{A}_1^{\mathsf{F}}\mathsf{T}_2 \overline{\mathsf{T}}_0$.
This completes the proof about why every element
$\mathsf{A}_1^{\mathsf{F}}\mathsf{T}_1$
can be obtained as a limit of elements
in $\mathsf{A}_1^{\mathsf{F}} \mathsf{T}_{0}\mathsf{T}_0$.
In particular, this justifies the first term on the right-hand side of
equation \eqref{A1FT0T0_coll_cycle_ver}.

The new thing we need to do for completing the proof of
equation \eqref{A1FT0T0_coll_cycle_ver}
(on the set-theoretic level) is to show that every element of
$\mathsf{A}_1^{\mathsf{L}}$ can be obtained as a limit of elements
in $\mathsf{A}_1^{\mathsf{F}} \mathsf{T}_{0}\mathsf{T}_0$. To prove this assertion, switch to affine space.
Let $f$ belong to
$(\mathsf{A}_1^{\mathsf{L}})_{\textnormal{Aff}}$. As before, the line is the $x$-axis. For convenience, set the
nodal point to be the origin. Hence, the expression for $f$ is given by
\begin{align*}
f(x, y)& = \frac{f_{20}}{2} x^2 + f_{11} xy + \frac{f_{02}}{2}y^2 + \mathcal{R}(x, y),
\end{align*}
where the remainder term $\mathcal{R}(x, y)$ is of degree three or higher.

We now try to construct a curve $f_t$ close to $f$, such that $f_t$ passes through the origin, passes through
$(t, 0)$ and has a nodal point close to $(0, 0)$. The expression for $f_t$ is given by
\begin{align*}
f_{t}(x, y) & = f_{t_{10}} x + f_{t_{01}} y + \frac{f_{t_{20}}}{2} x^2 + f_{t_{11}} xy + \frac{f_{t_{02}}}{2}y^2 + \cdots.
\end{align*}
First of all, $f_{t_{10}}$ and $f_{t_{01}}$ are small.
It is required that $f_t$ has a nodal point close to the origin.
Hence, we need to find $(u, v) \neq (0, 0)$ but small, such that
\begin{align}
f_t(u, v) & = 0, \qquad
(f_{t})_x(u,v) = 0 \qquad \textnormal{and} \qquad (f_{t})_y(u, v) = 0. \label{f_fx_fy}
\end{align}
To solve \eqref{f_fx_fy}, using the facts that
$(f_{t})_x(u,v) = 0$
and
$(f_{t})_y(u,v) = 0$
it is deduced that
\begin{align}
f_{t_{10}} & = -vf_{t_{11}}-uf_{t_{20}}-\mathcal{R}_x(u, v) \qquad \textnormal{and} \qquad f_{t_{01}} = -uf_{t_{11}}-vf_{t_{02}}
-\mathcal{R}_y(u, v). \label{f01_10_value}
\end{align}
Plugging in these values for $f_{t_{10}}$ and $f_{t_{01}}$ from \eqref{f01_10_value}, and
using the fact that $f_t(u, v) = 0$, it follows that
\begin{align}
& \frac{f_{t_{20}}}{2} u^2 + f_{t_{11}} uv + \frac{f_{t_{02}}}{2} v^2 + \mathcal{R}_2(u,v)  =  0, \label{xy_zero}
\end{align}
where $\mathcal{R}_2(u, v) := -2\mathcal{R}(u, v)+2u \mathcal{R}_x(u, v) + 2v \mathcal{R}_y(u, v)$.

We now try to solve $u$ in terms of $v$ using \eqref{xy_zero}.
Since the curve has a genuine node at the origin, it may be assumed that the hessian is non-degenerate;
in other words, $f_{t_{20}} f_{t_{02}} - f_{t_{11}}^2$ is nonzero.
Hence, after making a change of coordinates and using the fact that the remainder terms $\mathcal{R}_2(u, v)$
is of order three, it is deduced that there are two solutions to equation \eqref{xy_zero} given~by
\begin{gather*}
u  = \left(\frac{-f_{t_{11}} + \sqrt{f_{t_{11}}^2-f_{t_{20}} f_{t_{02}}}}{ f_{t_{20}}}\right) v + O\bigl(v^2\bigr) \\
\textnormal{or} \qquad u  =
\left(\frac{-f_{t_{11}} - \sqrt{f_{t_{11}}^2-f_{t_{20}} f_{t_{02}}}}{ f_{t_{20}}}\right) v + O\bigl(v^2\bigr).
\end{gather*}
Here \smash{$\sqrt{f_{t_{11}}^2-f_{t_{20}} f_{t_{02}}}$} is a specific branch of the square root, which exists because
$f_{t_{11}}^2-f_{t_{20}} f_{t_{02}}$ is nonzero. Hence, the above solution can be re-written as
\begin{align}
u &  =  A v + O\bigl(v^2\bigr), \qquad \textnormal{where} \quad
A := \left(\frac{-f_{t_{11}} \pm \sqrt{f_{t_{11}}^2-f_{t_{20}} f_{t_{02}}}}{ f_{t_{20}}}\right). \label{u_Av}
\end{align}
Now impose the condition that the curve passes through the point $(t, 0)$.
Using \eqref{u_Av} and the condition that $f_t(t, 0) = 0$, it follows that
\begin{align}
t&  =  2\left(\frac{f_{t_{11}} + A f_{t_{20}}}{f_{t_{20}}} \right)v + O\bigl(v^2\bigr). \label{t_zero_mult}
\end{align}
Hence, the condition of making the two points equal (namely setting $t$ = 0) has a multiplicity, which is
is given by \eqref{t_zero_mult}. For each value of $A$,
the multiplicity is one. Since there are two possible values of $A$ (corresponding to the branch of the square-root chosen),
the total multiplicity is two. Hence, when we intersect with the class $[q_1 = q_2]$, each branch of
$\mathsf{A}_1^{\mathsf{L}}$ contributes with a multiplicity of $1$ resulting in a total multiplicity is $2$. By a branch, we refer to each distinct solution to equation \eqref{f_fx_fy} of a neighbourhood of $\mathsf{A}_1^{\mathsf{F}} \mathsf{T}_{0}\mathsf{T}_{0}$ inside \smash{$\overline{\mathsf{A}}_1^{\mathsf{L}}$}.
This property of multiplicity finally shows that \eqref{A1FT0T0_coll_cycle_ver} is true on the level of
homology.

We now prove
the next case where $n = 1$ and $k_n = k-1$ with $k \geq 3$.
In that case, equation~\eqref{Tmu_T0_nu_alt_way_A1F_aagg_Cycle}
simplifies to
\begin{equation}\label{A1FT0T1_coll}
\bigl[\mathsf{A}_1^{\mathsf{F}} \mathsf{T}_{k-2}\mathsf{T}_{0}\bigr]
\cdot \bigl[\Delta_{12}^{\mathsf{L}}\bigr]
 =  \bigl[\mathsf{A}_1^{\mathsf{F}} \mathsf{T}_{k-1}\bigr]\cdot [\Delta_{12}] +
\bigl[\mathsf{P}^{(k-2)}\mathsf{A}_1\bigr]\cdot [\Delta_{12}] \cdot \bigl[\Delta_{2}^1\bigr].
\end{equation}
In order to prove \eqref{A1FT0T1_coll}, we
build on what has already been proved in Theorem \ref{Tk1Tk2_etc_alt_way}.
The justification for the first term on the right-hand side of
\eqref{A1FT0T1_coll} is the same as in the proof of Theorem \ref{Tk1Tk2_etc_alt_way}.
In order to show that every $\mathsf{A}_1^{\mathsf{F}}\mathsf{T}_{k-1}$ point can be
obtained as a limit of
$\mathsf{A}_1^{\mathsf{F}}\mathsf{T}_{k-1}$, we use the fact that
$f_{k-1, 0}$ is indeed a local coordinate on
$\mathsf{A}_1^{\mathsf{F}} \overline{\mathsf{T}}_{k-2}$ as seen in the proof of Proposition~\ref{prp_smth_mfld_with_A1f_ag}.

The new thing needed is to justify the second term on the right-hand side of \eqref{A1FT0T1_coll}.
In particular, it is needed to show that
every element of
$\mathsf{P}^{(k-2)}\mathsf{A}_1$ can be obtained as a limit of elements
in $\mathsf{A}_1^{\mathsf{F}} \mathsf{T}_{0}\mathsf{T}_{k-2}$. To prove this assertion,
restrict, as before, to affine space.

Let $f$ be a curve that belongs to $\mathsf{P}^{(k-2)}\mathsf{A}_1$; the line is set to be the $x$-axis and the nodal point
is set to be the origin. We try to construct a curve
$f_t$ that is tangent to the
$x$-axis at the origin to order $k-2$ (i.e., it is an element of $\mathsf{T}_{k-2}$). It is given by
\begin{align*}
f_t(x, y)& = u y + \mathsf{P} x^{k-1} + f_{t_{11}} x y + \frac{f_{t_{02}}}{2} y^2 +
\mathsf{Q} x^k + \mathcal{R} (x, y), \qquad
\textnormal{where} \\
\mathcal{R} (x, y)& := x^{k+1}\mathsf{A} (x) + y x^2 \mathsf{B} (x) + y^2 \mathsf{C} (x,y) \qquad \textnormal{and}
\qquad \mathsf{C} (0, 0) = 0.
\end{align*}
Notice that $f_{t_{01}}$ is written as $u$. Assume that $\mathsf{Q} \neq 0$.
Next, impose the condition that the curve also passes through $(t, 0)$.
In other words, $f_t(t, 0) = 0$. Using this equation and dividing out by~$t$, it follows that
$
\mathsf{P}  =  - \mathsf{Q} t + O\bigl(t^2\bigr)$.
Now impose the condition that the curve has a node at~$(x, y)$. Hence,
$
f_{t}(x, y)  = 0$, $ (f_{t})_x(x, y) = 0 $ and $(f_{t})_y(x, y) = 0$.
We are looking for solutions where $u$, $t$, $x$ and $y$ are small and $(x, y) \neq (0, 0)$.
Using the equation~${(f_{t})_y(x, y) = 0}$, it follows that
\begin{align}
x &  =  -\frac{u}{f_{t_{11}}} -\frac{y f_{t_{02}}}{f_{t_{11}}} + \mathcal{E}_2 (u, y),
\label{x_u}
\end{align}
where the error term $\mathcal{E}_2 (u, y)$ is of second order in $(u, y)$.
Next, plug this in the equation $(f_{t})_x(x, y) = 0$ and solve for $y$ in terms of $u$ and $t$. This produces
\begin{align}
(-1)^ky &  =  \frac{k \mathsf{Q}}{f_{t_{11}}^k} u^{k-1} + \frac{(k-1)\mathsf{Q}}{f_{t_{11}}^{k-1}} u^{k-2} t + \mathcal{E}_{k} (u, t), \label{y_u}
\end{align}
where the error term $\mathcal{E}_k(u, t)$ is of order $k$ in $(u, t)$.
Plugging all this in $f_{t}(x, y) = 0$ gives an implicit relationship between $u$ and $t$.
Note that $u$ cannot be zero.
This is because if $u$ were zero, then $y$ would be zero (equation \eqref{y_u})
and as a result $x$ could be zero (equation \eqref{x_u}). This is a contradiction,
since we are looking for solutions
where $x$ and $y$ are not both equal to zero.

Next, notice that the expression for $f_{t}(x, y)$ contains a factor of $u^3$.
Since $u \neq 0$, we can cancel off the factor of $u^3$ and get a
simplified implicit expression for $u$ and $t$. Now we can directly solve for $u$ in terms of $t$ and conclude that
$
u  =  -f_{t_{11}} t + O\bigl(t^2\bigr)$.
Plugging this back into the expression for $x$, gives
\begin{align}
x &  =  t + O\bigl(t^2\bigr). \label{x_mult}
\end{align}
These solutions are the only solutions.
Hence, from the expression for $x$ (namely \eqref{x_mult}), it follows that the multiplicity of the intersection is one.
This proves equation \eqref{A1FT0T1_coll}.

Notice that the nearby curve was constructed by specifying the value of
$u$ (which is~$f_{01}$) and~$\mathsf{P}$ (which, up to a constant factor of~$(k-1)!$ is $f_{k-1, 0}$). Now we note that on top of~$\overline{\mathsf{T}}_{k-2}$, both~$f_{01}$ and $f_{k-1,0}$ are indeed
local coordinates (as seen in the proofs of
Propositions \ref{prp_smth_mfld} and~\ref{prp_smth_mfld_with_PA1r_ag}).
The general case of Theorem \ref{Tk1Tk2_etc_alt_way_A1F} (when $n>1$)
now follows in an identical way.
\end{proof}

These next few results enable us to enumerate curves
tangent to a line at multiple points and one node lying on the line, such that one of the
branches of the node is tangent to the line to some given order.
We state the first one of these results.

\begin{thm}\label{theorem_for_many_tks_ag_pakL}
Let $n$, $r$ and $k_1, k_2,\dots, k_n$ be nonnegative integers.
Then the following equality of homology classes of $\mathsf{M}_{n+1}^1$ holds:
\begin{gather}
\pi^{\ast} \bigl[\mathsf{P}^{(r)} \mathsf{A}_1 \mathsf{T}_{k_1} \mathsf{T}_{k_2} \dots \mathsf{T}_{k_n} \bigr]\cdot
\pi_{n+1}^{\ast} [\mathsf{T}_0]\nonumber \\
\qquad =
\bigl[\mathsf{P}^{(r)} \mathsf{A}_1 \mathsf{T}_{k_1}\mathsf{T}_{k_2} \dots \mathsf{T}_{k_n} \mathsf{T}_{0}\bigr]
 + \sum_{i=1}^{n} (k_i+1) \pi^{\ast} \bigl[\mathsf{P}^{(r)} \mathsf{A}_1 \mathsf{T}_{k_1} \mathsf{T}_{k_2}\dots \mathsf{T}_{k_n}\bigr] \cdot [\Delta_{i, {n+1}}] \nonumber \\
\phantom{\qquad =}{} + (r+2) \pi^{\ast} \bigl[\mathsf{P}^{(r)} \mathsf{A}_1 \mathsf{T}_{k_1} \mathsf{T}_{k_2}
\dots \mathsf{T}_{k_n}\bigr] \cdot \bigl[\Delta_{{n+1}}^1\bigr],\label{many_tks_ag_pakL}
\end{gather}
provided $d > n+k+r+2$.
\end{thm}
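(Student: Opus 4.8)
The plan is to mirror the proof of Theorem \ref{theorem_for_many_Tks}, treating the distinguished nodal point $p$ (together with the order-$r$ tangency datum encoded by $\mathsf{P}^{(r)}\mathsf{A}_1$) as an inert spectator carried along unchanged throughout. First I would identify the two left-hand factors set-theoretically. The class $\pi^{\ast}[\mathsf{P}^{(r)}\mathsf{A}_1 \mathsf{T}_{k_1}\dots\mathsf{T}_{k_n}]$ is represented by the closure of the locus of a line $H_1$, a curve $H_d$ with a node at a point $p\in H_1$ one of whose branches is tangent to $H_1$ to order $r$, tangency points $x_1,\dots,x_n$ of orders $k_1,\dots,k_n$, and a free point $x_{n+1}$; the class $\pi_{n+1}^{\ast}[\mathsf{T}_0]$ forces $x_{n+1}\in H_1\cap H_d$. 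Intersecting the two, the point $x_{n+1}$ falls into exactly one of three cases: it is distinct from $p,x_1,\dots,x_n$ (giving the first term $[\mathsf{P}^{(r)}\mathsf{A}_1\mathsf{T}_{k_1}\dots\mathsf{T}_{k_n}\mathsf{T}_0]$); it coincides with some $x_i$ (giving the $i$-th summand of the middle term); or it coincides with the node $p$ (giving the new third term). This yields \eqref{many_tks_ag_pakL} on the set-theoretic level.

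Upgrading to an equality of homology classes proceeds term by term. The first term requires transversality of the intersection, supplied by Proposition \ref{prp_smth_mfld_with_PA1r_ag} applied to the $(n+1)$-pointed stratum $\overline{\mathsf{P}^{(r)}\mathsf{A}}_1\overline{\mathsf{T}}_{k_1}\dots\overline{\mathsf{T}}_{k_n}\overline{\mathsf{T}}_0$; the smoothness of this stratum is precisely what the hypothesis $d>n+k+r+2$ guarantees. The multiplicity $k_i+1$ on each summand of the middle term is a purely local statement at $x_i$ and is read off exactly as in Theorem \ref{theorem_for_many_Tks}: in a chart where the line is the $x$-axis, the evaluation map $f(x,0)$ vanishes to order $k_i+1$ at a $\mathsf{T}_{k_i}$-point, and the node elsewhere on the curve plays no role.

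The genuinely new ingredient, and the step I expect to be the main obstacle, is the multiplicity $(r+2)$ on the third term, coming from the collision of $x_{n+1}$ with the node $p$. I would place $p$ at the origin with $H_1$ the $x$-axis and examine the restriction $\varphi(x):=f(x,0)$. By definition of $\mathsf{P}^{(r)}\mathsf{A}_1$ one branch of the node is tangent to the $x$-axis to order $r$ and hence meets the line with intersection multiplicity $r+1$, while the second branch meets the line transversally---its tangent line being distinct from the $x$-axis, since the two branches of a node have distinct tangent lines---and contributes $1$. Thus $\varphi$ vanishes to order exactly $(r+1)+1=r+2$ at the origin, the order being exactly $r+2$ once we intersect with generic constraints so as to remain on the open stratum where the second branch is not also tangent. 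Exactly as in Theorem \ref{theorem_for_many_Tks}, the condition $f(x_{n+1},0)=0$ governing $x_{n+1}$ as it slides into $p$ then vanishes to order $r+2$, so this collision carries multiplicity $r+2$, producing the coefficient of the third term and completing the proof of \eqref{many_tks_ag_pakL} at the level of homology.
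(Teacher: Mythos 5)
Your proposal is correct and follows essentially the same route as the paper: a three-case set-theoretic decomposition of where $x_{n+1}$ lands, transversality for the leading term via Proposition \ref{prp_smth_mfld_with_PA1r_ag}, the multiplicity $k_i+1$ read off as in Theorem \ref{theorem_for_many_Tks}, and the new coefficient $r+2$ obtained as the vanishing order of the evaluation map $f(x,0)$ at the node. The only cosmetic difference is that you derive that order by summing contact orders over the two branches, $(r+1)+1$, whereas the paper reads it off directly from the Taylor expansion $f(x,0)=\tfrac{f_{r+2,0}}{(r+2)!}x^{r+2}+\cdots$ with $f_{r+2,0}\neq 0$ for generic constraints; these are the same computation.
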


\begin{proof}
This is a generalization of Theorem \ref{theorem_for_many_Tks}.
The new thing needed is to justify the third term on the right-hand side of \eqref{many_tks_ag_pakL}.
The special case of the theorem where $n = 0$ will be proved first. In this special case
\eqref{many_tks_ag_pakL} simplifies to
\begin{align}
\pi^*\bigl[\mathsf{P}^{(r)}\mathsf{A}_1\bigr] \cdot \pi_1^*[\mathsf{T}_0] &  =  \bigl[\mathsf{P}^{(r)}\mathsf{A}_1 \mathsf{T}_0\bigr] +
(r+2) \pi^*\bigl[\mathsf{P}^{(r)}\mathsf{A}_1\bigr]\cdot \bigl[\Delta_{1}^1\bigr]. \label{pa1_t0_sp}
\end{align}
On the set-theoretic level, the justification is the same as before
(the first term corresponds to when the two marked points are distinct, while the
second term corresponds to the case where the two marked points coincide).
The reason that
the first term on the right-hand side of \eqref{pa1_t0_sp}
appears with a multiplicity of one is because the intersections are transverse
(this is the content of Proposition \ref{prp_smth_mfld_with_PA1r_ag}).

The new thing that we have to justify is the multiplicity of $(r+2)$
for the second term on the right-hand side; let us justify that.
As before, we switch to affine space.
Set the $\mathsf{P}^{(r)}\mathsf{A}_1$ point to
be the origin.
The situation now is that we have a nodal curve such that one of the branches of the node is tangent to
the $x$-axis to order $r$ (at he origin).
Hence, the curve $f$ is such that~${f_{00}, f_{10}, f_{01}, f_{20},\dots, f_{r+1, 0}}$ all vanish. The function
$f$ is given by
\begin{align*}
f(x, y)&  =  \frac{f_{r+2, 0}}{(r+2)!} x^{r+2} + y\mathcal{R}(x, y),
\end{align*}
where $y \mathcal{R}(x, y)$ is a remainder term. Now consider the evaluation map
\begin{align*}
\varphi(f, x) :=  f(x,0)  = \frac{f_{r+2, 0}}{(r+2)!} x^{r+2}.
\end{align*}
The order of vanishing of $\varphi$ is clearly $r+2$, provided
$f_{r+2, 0} \neq 0$. But that assumption is valid, since to compute the order of vanishing, we will be intersecting with
cycles that correspond to constraints being generic. Hence, the order of vanishing is $r+2$. This proves
\eqref{pa1_t0_sp} on the level of homology.

The general statement of Theorem \ref{theorem_for_many_tks_ag_pakL} is now similar to how Theorem \ref{theorem_for_many_Tks}
is proved.
\end{proof}

We prove the next theorem.

\begin{thm}
\label{Node_whose_one_brunch_tangent at multi}
Let $n$ be a positive integer, $k_1, k_2,\dots, k_{n-1}$ nonnegative integers
and $k_n$ a~positive integer.
Then the following equality of homology classes
holds in $H_*\bigl(\mathsf{M}^1_{n+1}; \mathbb{R}\bigr)$:
\begin{align}
\bigl[\mathsf{P}^{(r)} \mathsf{A}_1 \mathsf{T}_{k_1} \dots \mathsf{T}_{k_{n-1}} \mathsf{T}_{k_n-1} \mathsf{T}_0\bigr] \cdot
\bigl[\Delta_{n, {n+1}}^{\mathsf{L}}\bigr]&  =
\pi^*\bigl[\mathsf{P}^{(r)} \mathsf{A}_1 \mathsf{T}_{k_1} \dots \mathsf{T}_{k_n}] \cdot [\Delta_{n, n+1}\bigr], \label{Tmu_T0_nu_alt_way_A1F}
\end{align}
provided $d > n+k+r+1$.
\end{thm}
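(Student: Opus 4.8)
The plan is to treat this as the $\mathsf{P}^{(r)}\mathsf{A}_1$-decorated version of Theorem \ref{Tk1Tk2_etc_alt_way}. The decisive structural point is that the nodal point $p$ carrying the $\mathsf{P}^{(r)}\mathsf{A}_1$-structure is already pinned to the line $H_1$ and, by the very definition of the product spaces $\mathsf{M}^1_{n+1}$, is distinct from every tangency marked point. Hence colliding the $\mathsf{T}_{k_n-1}$ point $x_n$ with the $\mathsf{T}_0$ point $x_{n+1}$---both constrained to lie on $H_1$---takes place away from $p$; the nodal structure is merely carried along unchanged, and the two tangency points merge into a single $\mathsf{T}_{k_n}$ point. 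This is precisely why the right-hand side of \eqref{Tmu_T0_nu_alt_way_A1F} carries no excess term, in sharp contrast to Theorem \ref{Tk1Tk2_etc_alt_way_A1F}.

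First I would establish the equality on the set-theoretic level. Passing to affine coordinates as in the proofs of Proposition \ref{prp_smth_mfld} and Theorem \ref{Tk1Tk2_etc_alt_way}, I set the line to be the $x$-axis, place the colliding point at the origin, and keep the $\mathsf{P}^{(r)}\mathsf{A}_1$ point $p$ at a distinct location on the line. The collision argument of Theorem \ref{Tk1Tk2_etc_alt_way} then applies verbatim to $x_n$ and $x_{n+1}$: in the limit, $f_{00}, f_{10}, \dots, f_{k_n,0}$ all vanish at the collision point, so the limiting configuration lies in $\overline{\mathsf{P}^{(r)}\mathsf{A}}_1\overline{\mathsf{T}}_{k_1}\dots\overline{\mathsf{T}}_{k_n}$. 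For the converse inclusion I would invoke Proposition \ref{prp_smth_mfld_with_PA1r_ag}, which guarantees (for $d > n+k+r+1$) that $f_{k_n,0}$ is an honest local coordinate on $\overline{\mathsf{P}^{(r)}\mathsf{A}}_1\overline{\mathsf{T}}_{k_1}\dots\overline{\mathsf{T}}_{k_{n-1}}\overline{\mathsf{T}}_{k_n-1}$; exploiting this freedom, I would construct a nearby curve $f_t$ exactly as in \eqref{ft_k0}, carrying a $\mathsf{T}_{k_n-1}$ point at the origin and a $\mathsf{T}_0$ point at $(t,0)$ while still meeting the $\mathsf{P}^{(r)}\mathsf{A}_1$-condition at the untouched point $p$, and converging to the prescribed $\mathsf{T}_{k_n}$ configuration as $t \to 0$. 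The multiplicity is computed exactly as in \eqref{ft_k0_t}: the coincidence parameter $t$ vanishes to order one, so the collision contributes with multiplicity one, yielding the single term on the right-hand side.

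The step needing the most care is confirming that no further contribution appears, i.e., that this theorem follows the pattern of Theorem \ref{Tk1Tk2_etc_alt_way} rather than that of Theorem \ref{Tk1Tk2_etc_alt_way_A1F}, whose right-hand side contains the extra term $\mathsf{m}_{k_n}\pi^{\ast}[\mathsf{P}^{(k_n-1)}\mathsf{A}_1\mathsf{T}_{k_1}\dots\mathsf{T}_{k_{n-1}}]\cdot[\Delta_{n,n+1}]\cdot[\Delta^1_{n+1}]$. The only candidate for an excess locus is the one in which the colliding tangency point tends to the nodal point $p$. In Theorem \ref{Tk1Tk2_etc_alt_way_A1F} the node was free, so such a coincidence only drove the free node onto the line and produced a genuine component of the same dimension. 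Here, by contrast, the node is already a $\mathsf{P}^{(r)}\mathsf{A}_1$ point lying on the line, so forcing a tangency point to meet it imposes a strictly additional coincidence condition and therefore lands in a locus of one higher codimension. As in the discussion following \eqref{T0T0_collides_T1_aagg}, such a locus is absorbed into the closure $\overline{\mathsf{P}^{(r)}\mathsf{A}_1\mathsf{T}_{k_1}\dots\mathsf{T}}_{k_n}$ and contributes nothing to the homology class. The reduction from two colliding points to arbitrarily many tangency points then proceeds exactly as at the end of the proof of Theorem \ref{Tk1Tk2_etc_alt_way}, again using that $f_{k_n,0}$ is a local coordinate on the smooth submanifold furnished by Proposition \ref{prp_smth_mfld_with_PA1r_ag}.
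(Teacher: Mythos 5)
Your proposal is correct and follows essentially the same route as the paper, which disposes of this theorem in one line by declaring it a generalization of Theorem \ref{Tk1Tk2_etc_alt_way} with an identical proof. Your elaboration of the one genuinely new point --- that no excess term arises because the $\mathsf{P}^{(r)}\mathsf{A}_1$ point is already constrained to the line, so a collision with it lands in a locus of one higher codimension that is absorbed into the closure --- is exactly the justification the paper leaves implicit.
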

\begin{proof}
This is a generalization of Theorem \ref{Tk1Tk2_etc_alt_way}; the proof is the same.
\end{proof}

The final result is as follows.

\begin{thm}
\label{Node_whose_one_brunch_tangent}
Let $r$ be a positive integer.
Then the following equality of homology classes hold in $H_*\bigl(\mathsf{M}^1_1; \mathbb{R}\bigr)$:
\begin{align}
\bigl[\mathsf{P}^{(r-1)} \mathsf{A}_1 \mathsf{T}_0\bigr]\cdot
\bigl[\bigl(\Delta^{1}_{1}\bigr)^{\mathsf{L}}\bigr]
& =  \bigl[\mathsf{P}^{(r)} \mathsf{A}_1\bigr] \cdot \bigl[\Delta^1_1\bigr], \label{p_tang_plower_tang}
\end{align}
provided $d > r+1$ and where the class
$\bigl[\bigl(\Delta^{1}_{1}\bigr)^{\mathsf{L}}\bigr]$ is defined
as
\[ \bigl[\bigl(\Delta^{1}_{1}\bigr)^{\mathsf{L}}\bigr]:= b_1 + a_1-y_1. \]
\end{thm}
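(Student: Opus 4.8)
The plan is to follow the template of Theorem~\ref{Tk1Tk2_etc_alt_way}, treating the branch of the node that is tangent to the line exactly as one treats a smooth tangency point there. By the collision lemma (Lemma~\ref{CL_ver2}), since for a point of $\mathsf{P}^{(r-1)}\mathsf{A}_1\mathsf{T}_0$ both the node $p$ and the transverse intersection point $x_1$ lie on the line $H_1$, intersecting $\bigl[\mathsf{P}^{(r-1)}\mathsf{A}_1\mathsf{T}_0\bigr]$ with $\bigl[(\Delta^1_1)^{\mathsf{L}}\bigr] = b_1 + a_1 - y_1$ is the same as restricting to the diagonal $\{p = x_1\}$. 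Thus the proof reduces to a set-theoretic identification together with a multiplicity count: I must show that $(H_1, H_d, p, p) \in \overline{\mathsf{P}^{(r-1)}\mathsf{A}_1\mathsf{T}_0}$ if and only if $(H_1, H_d, p) \in \overline{\mathsf{P}^{(r)}\mathsf{A}_1}$, and that this collision occurs with multiplicity one. The bound $d > r+1$ is exactly what Proposition~\ref{prp_smth_mfld_with_PA1r_ag} requires for the relevant closures to be smooth and hence to define homology classes.

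For the forward (set-theoretic) direction I would switch to affine coordinates, placing the node at the origin and the line along the $x$-axis. For a point of $\mathsf{P}^{(r-1)}\mathsf{A}_1\mathsf{T}_0$ the restriction $f(x,0)$ vanishes to order $r$ at the origin coming from the tangent branch (order of tangency $r-1$ means contact $r$), plus order $1$ from the transverse branch, and has an additional simple zero at the transverse point $x_1 = (t,0)$. Letting $t \to 0$ merges that simple zero into the tangent branch, so $f(x,0)$ acquires a zero of order $r+2$ at the origin; since the conditions $f_{10} = f_{01} = 0$ defining the node persist in the limit and the second branch stays transverse, the tangent branch now has contact $r+1$, i.e.\ order of tangency $r$. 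Hence the limit lies in $\overline{\mathsf{P}^{(r)}\mathsf{A}_1}$.

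The converse, that every $\mathsf{P}^{(r)}\mathsf{A}_1$ point arises this way, is the step that needs care and is where I expect the main obstacle. Here I would construct, for a given $f \in \mathsf{P}^{(r)}\mathsf{A}_1$, a nearby family $f_t$ that retains a genuine node near the origin but whose tangent branch drops to order $r-1$ while producing a transverse intersection $x_1 = (t,0) \to (0,0)$. As in the proof of equation~\eqref{A1FT0T1_coll}, I would impose the node equations $f_t = (f_t)_x = (f_t)_y = 0$ at a nearby point together with $f_t(t,0) = 0$, and solve. The key input, exactly as in Theorem~\ref{Tk1Tk2_etc_alt_way}, is that the coefficient controlling the branch tangency (essentially $f_{r+1,0}$) remains a genuine local coordinate along $\overline{\mathsf{P}^{(r-1)}\mathsf{A}_1}$ transverse to the deeper locus $\overline{\mathsf{P}^{(r)}\mathsf{A}_1}$; this is supplied by the transversality established in Proposition~\ref{prp_smth_mfld_with_PA1r_ag}, and is precisely why the bound on $d$ is needed. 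Tracking this solution as in equations~\eqref{ft_10}--\eqref{ft_10_t} should give $t$ vanishing to first order in the deformation parameter, yielding multiplicity one. I emphasize that, unlike Theorem~\ref{Tk1Tk2_etc_alt_way_A1F} where colliding two transverse points \emph{creates} a node and produces the factor $\mathsf{m}_{k_n}=2$ from its two branches, here the node is already present and only the single tangent branch absorbs the transverse point, so no such combinatorial factor appears and the right-hand side of~\eqref{p_tang_plower_tang} carries multiplicity one.
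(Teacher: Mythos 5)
Your proposal is correct and follows essentially the same route as the paper: reduce via the collision lemma to a set-theoretic identification plus a multiplicity count in affine coordinates (node at the origin, line the $x$-axis), obtain the forward direction from the simple zero of $f(x,0)$ at $(t,0)$ merging into the zero of order $r+1$ at the origin, and get the converse and the multiplicity-one statement by using that $f_{r+1,0}$ is a local coordinate transverse to the deeper stratum, exactly as in the $\mathsf{T}_r\mathsf{T}_0\to\mathsf{T}_{r+1}$ argument of Theorem~\ref{Tk1Tk2_etc_alt_way}. The paper keeps the node pinned at the origin rather than re-solving the node equations at a nearby point, but this is only a simplification of the same deformation argument, and your closing observation about why no combinatorial factor of $2$ appears is consistent with the paper's statement.
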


\begin{proof}
First \eqref{p_tang_plower_tang} will be proved on the set-theoretic level.
We switch to affine space. As before, the line is the $x$-axis.
Set the $\mathsf{P}^{(r-1)} \mathsf{A}_1$ point to be the origin
and the $\mathsf{T}_0$ point to be~$(t, 0)$. Let $f_t$ be a curve that has a
$\mathsf{P}^{(r-1)} \mathsf{A}_1$ point at the origin and a $\mathsf{T}_0$ point at $(t, 0)$.
The former condition says that the first $r$ derivatives with respect to $x$ vanish at the origin.
The fact that the curve also passes through $(t, 0)$ tells that the $(r+1)$-th derivative
is given by
\[f_{r+1,0}  =  -\left(\frac{f_{r+2,0}}{r+2}\right)t + O\bigl(t^2\bigr).\]
Hence, as $t$ goes to zero, $f_{r+1,0}$ vanishes. Furthermore, the curve has a node at the origin. Hence, in the limit,
the curve belongs to $\mathsf{P}^{(r)} \mathsf{A}_1$.

To complete the proof on the set-theoretic level, it suffices
to show that every element of~$\mathsf{P}^{(r)} \mathsf{A}_1$ arises as a limit of elements of
$\mathsf{P}^{(r-1)} \mathsf{A}_1 \mathsf{T}_0$. The proof is exactly the same as how we show every element of
$\mathsf{T}_{r+1}$ is a limit of elements of $\mathsf{T}_r \mathsf{T}_0$. The multiplicity of the intersection also follows in the
same way. This proves \eqref{p_tang_plower_tang} on the level of homology and completes the proof Theorem~\ref{Node_whose_one_brunch_tangent}.
\end{proof}

Finally, we note that the expression for the homology class
$\bigl[\mathsf{A}_1^{\mathsf{F}}\bigr]$
can be computed using the
results of \cite{R.M}.
We give a new way to derive that expression in
Section \ref{singularity_in_our_format}.
For now, assume that
the expression for $\bigl[\mathsf{A}_1^{\mathsf{F}}\bigr]$ is known
(which in Section \ref{node_new}, given by \eqref{a1_free_cycle}).

We now explain how to compute
all the characteristic numbers
involving the class $\bigl[\mathsf{A}_1^{\mathsf{F}} \mathsf{T}_{k_1} \dots\allowbreak \mathsf{T}_{k_n}\bigr]$.
Using equations
\eqref{many_Tks_A1F} and \eqref{Tmu_T0_nu_alt_way_A1F}, we can reduce it to a
question of computing characteristic numbers
involving the classes
$\bigl[\mathsf{A}_1^{\mathsf{F}} \mathsf{T}_{k_1} \dots \mathsf{T}_{k_n-1}\bigr]$
and
\smash{$\bigl[\mathsf{P}^{(k_n-1)}\mathsf{A}_1 \mathsf{T}_{k_1} \dots \mathsf{T}_{k_{n-1}}\bigr]$}.
Using equations~\eqref{many_tks_ag_pakL} and \eqref{Tmu_T0_nu_alt_way_A1F}, this
ultimately reduces to
the computation of all intersection numbers involving the class $\bigl[\mathsf{P}^{(r)}\mathsf{A}_1\bigr]$.
Using equations \eqref{p_tang_plower_tang} and
\eqref{many_tks_ag_pakL},
this finally reduces
to the computation of~$\bigl[\mathsf{A}_1^{\mathsf{L}}\bigr]$; this can be computed from
equations \eqref{a1_free_cycle} and
\eqref{a1_line_cycle}. Hence, all the intersection numbers can be computed.

\section[Counting 1-cuspidal curves with multiple tangencies of first order]{Counting $\boldsymbol{ 1}$-cuspidal curves\\ with multiple tangencies of first order}
\label{Cuspidal_tang}
In this section, we show how to enumerate one cuspidal curves with first-order tangencies.
We continue with the set up and notation of Section \ref{nodal_tang}.
In addition, we need to define two new spaces.
Define
$\mathsf{A}_2^{\mathsf{F}} \subset \mathsf{M}^1_0$ to be the subset of all
$(H_1, H_d, p) \in \mathsf{M}^1_0$ such that
$H_d$ has a cusp at $p$.
Similarly, define
$\mathsf{A}_2^{\mathsf{L}} \subset \mathsf{M}^1_0$ to be the subset of all
$(H_1, H_d, p) \in \mathsf{M}^1_0$ such that
\begin{itemize}\itemsep=0pt
\item The curve $H_d$ has a cusp at $p$.
\item The point $p$ lies on the line $H_1$.
\end{itemize}
Note that
\begin{align}
\bigl[\mathsf{A}_2^{\mathsf{L}}\bigr] &  =  \bigl[\mathsf{A}_2^{\mathsf{F}}\bigr]\cdot (y_1 + b_1);
\label{a2_line_cycle}
\end{align}
this is because intersecting with $(y_1+b_1)$ corresponds to the point $p$ lying on the line.
Let us now prove a few transversality results.
In the following propositions,
$n$, $k_1, k_2,\dots, k_{n}$ are nonnegative integers
and
$k := k_1+k_2+\dots +k_n$.

\begin{prp}
\label{prp_smth_mfld_with_A2f_ag}
The space
$\mathsf{A}^{\mathsf{F}}_2\overline{\mathsf{T}}_{k_1}\dots \overline{\mathsf{T}}_{k_n}$
is a smooth submanifold of $\mathsf{M}_{n}^{1}$, provided $d \geq k+n+2$.
\end{prp}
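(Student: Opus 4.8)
The plan is to follow the incremental strategy of the proofs of Propositions~\ref{prp_smth_mfld} and~\ref{prp_smth_mfld_with_A1f_ag}, the one genuinely new feature being the extra equation separating a cusp from a node, namely the degeneracy of the Hessian. As always I would pull everything back to the affine model $\mathcal{F}_d^{+} \times \mathbb{C}^2$ via the submersion $\pi_{+}$, so that it suffices to exhibit $0$ as a regular value of explicit maps. For the base case $n = 0$ I would take
\[
\varphi\colon\ \mathcal{F}_d^{+} \times \mathbb{C}^2 \longrightarrow \mathbb{C}^4, \qquad
\varphi(f, (x,y)) := \bigl(f,\, f_x,\, f_y,\, f_{xx}f_{yy} - f_{xy}^2\bigr),
\]
whose zero locus is the set of curves whose singularity at the marked point is at least as degenerate as a cusp; the genuine cusp locus $\mathsf{A}_2^{\mathsf{F}}$ is the open subset on which the singularity is exactly of type $\mathsf{A}_2$, so it is enough to prove that $0$ is a regular value of $\varphi$ at cusp points. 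At such a point I would rotate coordinates so that the cusp is the origin with quadratic part $y^2$, i.e.\ $f_{xx} = f_{xy} = 0$ and $f_{yy} = 2$ there; this rotation is harmless because the line $H_1$ is unconstrained when $n = 0$. Perturbing $f$ by $\eta \in \{1,\, x,\, y,\, x^2\}$ then sends $\mathrm{d}\varphi$ to $(1,0,0,0)$, $(0,1,0,0)$, $(0,0,1,0)$ and $(0,0,0,4)$, the last because the linearization of $f_{xx}f_{yy} - f_{xy}^2$ at this $2$-jet is $2\eta_{xx}(0,0)$. Hence $\mathrm{d}\varphi$ is surjective and $\mathsf{A}_2^{\mathsf{F}}$ is smooth of codimension four; since the highest-degree polynomial used is $x^2$, this step already requires $d \geq 2$.

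For the tangency conditions I would add them one point and one order at a time, exactly as in Proposition~\ref{prp_smth_mfld}, showing that $\mathsf{A}_2^{\mathsf{F}}\overline{\mathsf{T}}_{k_1}\dots \overline{\mathsf{T}}_{k_n}$ is a codimension-one submanifold of $\mathsf{A}_2^{\mathsf{F}}\overline{\mathsf{T}}_{k_1}\dots \overline{\mathsf{T}}_{k_n - 1}$ by checking that the section induced by $f_{k_n, 0}$ is transverse to zero. Setting the line to be the $x$-axis, the last tangency point to be the origin, the remaining tangency points to be $(\mathbf{a}_i, 0)$, and the cusp to be a point $\mathbf{p} = (a,b)$ distinct from all of these, I would take the perturbation
\[
\eta(x,y) := \theta(x,y)\,(x-\mathbf{a}_1)^{k_1+1}\cdots (x-\mathbf{a}_{n-1})^{k_{n-1}+1}\, x^{k_n},
\]
where $\theta$ is the cube of an affine-linear form vanishing at $\mathbf{p}$ (for instance $\theta = (y-b)^3$ when the cusp is off the line and $\theta = (x-a)^3$ when it lies on the line), normalized so that $\theta(0,0) \neq 0$. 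The factors $(x-\mathbf{a}_i)^{k_i+1}$ ensure that $\gamma(t) := f + t\eta$ retains the lower tangency conditions, while $\eta_{k_n, 0} = \theta(0,0)\prod_{i=1}^{n-1} (-\mathbf{a}_i)^{k_i+1} \neq 0$ gives the transversality of the new condition.

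The place where the cusp genuinely departs from the node — and where I expect the real work to be — is that the perturbation $\eta$ must preserve not only $f = f_x = f_y = 0$ at $\mathbf{p}$, as for a node, but also the Hessian-degeneracy $f_{xx}f_{yy} - f_{xy}^2 = 0$ there. This is exactly why $\theta$ is taken to vanish to order three at $\mathbf{p}$: then $\eta$ has vanishing $2$-jet at $\mathbf{p}$, so the Hessian of $f + t\eta$ at $\mathbf{p}$ is unchanged for all $t$, the degeneracy equation is preserved, and for small $t$ the cubic term of $\gamma(t)$ still carries the cusp's distinguished direction, so $\mathbf{p}$ remains a genuine $\mathsf{A}_2$ point rather than degenerating further. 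Because $\theta$ now has degree three instead of the degree two used for a node, the degree of $\eta$ is $3 + \sum_{i=1}^{n-1}(k_i+1) + k_n = k + n + 2$, and requiring $\eta \in \mathcal{F}_d$ yields precisely the hypothesis $d \geq k + n + 2$, one more than the nodal bound $d \geq k+n+1$ of Proposition~\ref{prp_smth_mfld_with_A1f_ag}. The remaining bookkeeping, namely that transversality at every stage of the induction together with the base case assembles into smoothness of the whole space, is identical to the nodal argument, and I would simply refer to the proof of Proposition~\ref{prp_smth_mfld_with_A1f_ag}.
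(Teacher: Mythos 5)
Your proposal is correct and follows essentially the same route as the paper: the inductive addition of tangency conditions with the perturbation $\eta = \theta\cdot\prod_i(x-\mathbf{a}_i)^{k_i+1}\cdot x^{k_n}$, the choice of $\theta$ as the \emph{cube} of a linear form vanishing at the cusp so that the $2$-jet (and hence the Hessian degeneracy) at $\mathbf{p}$ is preserved, and the resulting degree count $k+n+2$ are exactly the paper's argument. The only cosmetic difference is the base case $n=0$: the paper realizes $\mathsf{A}_2^{\mathsf{F}}$ as a codimension-one submanifold of the already-smooth $\overline{\mathsf{A}_1^{\mathsf{F}}}$ via the single equation $f_{xx}f_{yy}-f_{xy}^2=0$ (using the dichotomy $f_{yy}\neq 0$ or $f_{xx}\neq 0$ at a genuine cusp and $\eta=(x-a)^2$ or $(y-b)^2$), whereas you verify regularity of the full four-equation system at once after rotating so the quadratic part is $y^2$ — both are valid instances of the same transversality computation.
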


\begin{prp}
\label{prp_smth_mfld_with_A2L_ag}
The space
$\mathsf{A}^{\mathsf{L}}_2\overline{\mathsf{T}}_{k_1}\dots \overline{\mathsf{T}}_{k_n}$
is a smooth submanifold of $\mathsf{M}_{n}^{1}$, provided $d \geq k+n+2$.
\end{prp}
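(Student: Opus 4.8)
The plan is to deduce this from Proposition \ref{prp_smth_mfld_with_A2f_ag}, which already gives that the free--cusp locus $\mathsf{A}^{\mathsf{F}}_2\overline{\mathsf{T}}_{k_1}\dots \overline{\mathsf{T}}_{k_n}$ is a smooth submanifold of $\mathsf{M}_n^1$ as soon as $d\ge k+n+2$. The space $\mathsf{A}^{\mathsf{L}}_2\overline{\mathsf{T}}_{k_1}\dots \overline{\mathsf{T}}_{k_n}$ is exactly the sublocus on which the cusp point $p$ is additionally required to lie on the line $H_1$, so it is enough to show that this one incidence condition cuts out a smooth codimension-one submanifold, i.e.\ that it is transverse to the smooth stratum $\mathsf{A}^{\mathsf{F}}_2\overline{\mathsf{T}}_{k_1}\dots \overline{\mathsf{T}}_{k_n}$. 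Note that the bound $d\ge k+n+2$ is identical to that of Proposition \ref{prp_smth_mfld_with_A2f_ag}: this is consistent with the picture, since imposing the incidence removes one degree of freedom rather than demanding more room in $\mathcal{F}_d^{+}$, so the binding requirement should remain smoothness of the free stratum.

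I would organise the transversality check in affine coordinates, exactly as in the proofs of Propositions \ref{prp_smth_mfld} and \ref{prp_smth_mfld_with_A1f_ag}: fix the line to be the $x$-axis, place the genuine cusp at the origin and the tangency points at distinct nonzero points $(\mathbf{a}_1,0),\dots,(\mathbf{a}_n,0)$. In this chart the cusp conditions on $f\in\mathcal{F}_d^{+}$ are $f_{00}=f_{10}=f_{01}=0$ together with the Hessian-degeneracy $f_{20}f_{02}-f_{11}^2=0$ (with the $2$-jet nonzero, so the cusp is honest), the tangency conditions are the vanishing of the $x$-derivatives up to order $k_i$ at each $(\mathbf{a}_i,0)$, and the incidence $p\in H_1$ says precisely that the $y$-coordinate of the cusp vanishes. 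The cleanest way to package this is to view $\mathsf{A}^{\mathsf{F}}_2\overline{\mathsf{T}}_{k_1}\dots \overline{\mathsf{T}}_{k_n}$ as a fibre bundle over the smooth incidence variety of admissible data (line, cusp location, tangency locations), whose fibre over a fixed such datum is the set of degree-$d$ curves realising the prescribed cusp and tangency jets; this fibre and its smoothness depend only on the mutual distinct positions of the marked points, not on whether the cusp location happens to be incident to the line. Restricting the cusp to the line then merely replaces the base by its smooth codimension-one sublocus cut out by that incidence, leaving the fibre unchanged, so the total space is smooth of one higher codimension.

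The step I expect to require the most care is the equivalent direct verification of transversality, namely exhibiting a tangent vector to $\mathsf{A}^{\mathsf{F}}_2\overline{\mathsf{T}}_{k_1}\dots \overline{\mathsf{T}}_{k_n}$ along which the incidence function has nonzero derivative, i.e.\ a deformation that slides the cusp off the line while keeping it a genuine cusp and preserving all the tangency orders. As in equation \eqref{theta_defn}, the perturbations would be built from the factor $\theta$ and the products $(x-\mathbf{a}_i)^{k_i+1}$, chosen so that the tangency jets at the $(\mathbf{a}_i,0)$ are left untouched; the real content is checking that one can simultaneously move the cusp point in the normal ($y$) direction while maintaining the three equations $f=f_x=f_y=0$ and the degeneracy of the Hessian. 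This is exactly where the hypothesis $d\ge k+n+2$ enters, guaranteeing enough free coefficients of $f$ to realise the required jet data independently, and the bookkeeping is the same as the one already carried out for the free cusp in Proposition \ref{prp_smth_mfld_with_A2f_ag}. The only genuinely new ingredient is that this normal motion of the cusp is transverse to the incidence, which completes the argument.
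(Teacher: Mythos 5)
Your route is genuinely different from the paper's. The paper realises $\mathsf{A}^{\mathsf{L}}_2\overline{\mathsf{T}}_{k_1}\dots \overline{\mathsf{T}}_{k_n}$ as a smooth codimension-one hypersurface inside the node-on-the-line locus $\overline{\mathsf{A}^{\mathsf{L}}_1}\overline{\mathsf{T}}_{k_1}\dots \overline{\mathsf{T}}_{k_n}$ (whose smoothness is Proposition \ref{prp_smth_mfld_with_PA1r_ag} with $r=0$), cut out by the Hessian degeneracy $f_{xx}f_{yy}-f_{xy}^2=0$; the transversality check is then the one from Proposition \ref{prp_smth_mfld_with_A2f_ag} repeated verbatim, with $\theta(x,y):=(x-a)^3$ since the cusp now sits on the line. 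You instead realise the locus as a hypersurface inside the free-cusp locus $\mathsf{A}^{\mathsf{F}}_2\overline{\mathsf{T}}_{k_1}\dots \overline{\mathsf{T}}_{k_n}$, cut out by the incidence $p\in H_1$. Both decompositions account for the same codimension ($4+1$ versus $(3+1)+1$) and both are consistent with the bound $d\geq k+n+2$, so your framing is legitimate; what it buys is that the only new verification is a single incidence transversality rather than re-running the cusp computation with all points on the line.

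However, the one concrete recipe you offer for that verification fails at some points of the locus, and this is exactly the step you flag as the crux. A perturbation of the form $\eta=\theta\cdot\prod_i(x-\mathbf{a}_i)^{k_i+1}$ has vanishing $1$-jet at the cusp point $p$ (that is what $\theta$ is designed to achieve), so the linearisations of $f_x=0$ and $f_y=0$ force the induced motion $\dot p$ of the cusp to satisfy $\mathrm{Hess}(f)|_p\cdot\dot p=0$, i.e., $\dot p$ must lie in the distinguished (kernel) direction of the cusp. At points of $\mathsf{A}^{\mathsf{L}}_2$ where that direction is tangent to the line, which is a nonempty part of the locus since $\mathsf{A}^{\mathsf{L}}_2$ imposes no condition on the cuspidal direction, such perturbations cannot slide the cusp off the line and transversality of the incidence is not visible this way. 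The repair is to drop $\theta$ and take $\eta=\prod_i(x-\mathbf{a}_i)^{k_i+1}\cdot q(x,y)$ with $q$ quadratic, its $1$-jet at $p$ prescribed so that $\eta(p)=0$, $\eta_x(p)=-f_{xy}(p)$, $\eta_y(p)=-f_{yy}(p)$, and one linear condition on its $2$-jet absorbing the linearised Hessian equation (solvable because a genuine cusp has nonzero $2$-jet); this yields a tangent vector to $\mathsf{A}^{\mathsf{F}}_2\overline{\mathsf{T}}_{k_1}\dots\overline{\mathsf{T}}_{k_n}$ with $\dot p=(0,1)$, and it exists precisely when $d\geq k+n+2$. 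Your fibre-bundle packaging should likewise be treated as shorthand rather than proof: unlike the $\overline{\mathsf{T}}_0$ case, there is no affine change of coordinates trivialising the family over the whole configuration base while preserving the line and all marked points simultaneously, so the claim that the fibre ``does not depend on whether the cusp is incident to the line'' is equivalent to, not a substitute for, the tangent-vector computation above.
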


\begin{proof}[Proof of Proposition \ref{prp_smth_mfld_with_A2f_ag}]
We show that
$\mathsf{A}^{\mathsf{F}}_2\overline{\mathsf{T}}_{k_1}\dots \overline{\mathsf{T}}_{k_n}$
is a codimension one submanifold of~\smash{$\overline{\mathsf{A}^{\mathsf{F}}_1}\overline{\mathsf{T}}_{k_1}\dots \overline{\mathsf{T}}_{k_n}$}.
We first prove it for $n = 0$, i.e., we show that
$\mathsf{A}^{\mathsf{F}}_2$ is smooth codimension one submanifold of
\smash{$\overline{\mathsf{A}^{\mathsf{F}}_1}$}.
As before, we switch to affine space.
Let \smash{$\bigl(\overline{\mathsf{A}_1^{\mathsf{F}}}\bigr)_{\textnormal{Aff}}$}
be as defined in equation \eqref{A1F_affine}. Define the map
\[
\varphi \colon\ \bigl(\overline{\mathsf{A}_1^{\mathsf{F}}}\bigr)_{\textnormal{Aff}} \longrightarrow \mathbb{C},\qquad
(f, (x, y)) \longmapsto \bigl(f_{xx} f_{yy} -f_{xy}^2\bigr)(x, y).
\]
It suffices to show that whenever $(a, b)$ is a genuine cuspidal point of $f$, the differential of
$\varphi$
is surjective.

In order to prove that claim,
assume that $(f, (a, b)) \in \varphi^{-1}(0)$ and that
$(a, b)$ is a genuine cuspidal point of $f$.
In that case, $f_{xx}(a, b)$ and $f_{yy}(a, b)$
both can't be zero, because otherwise, even $f_{xy}(a, b)$ would be zero, making $(a, b)$ a triple
point of $f$ (i.e., it is not a genuine cusp). Assume that $f_{yy}(a, b) \neq 0$.
Define the polynomial $\eta$ given by
$\eta(x,y):= (x-a)^2$.
Define the curve $\gamma$ given by
$\gamma(t) :=  (f+t \eta, x, y)$.
We note that $\gamma(t)$ lies in
\smash{$\bigl(\overline{\mathsf{A}_1^{\mathsf{F}}}\bigr)_{\textnormal{Aff}}$} since $t$ is nonzero but small.
We now note that
\begin{align*}
\{{\rm d}\varphi|_{(f,(a,b))}\}(\gamma^{\prime}(0))& = f_{yy}(a,b) \eta_{xx}(a,b).
\end{align*}
Since $f_{yy}(a,b) \neq 0$ by assumption and
$\eta_{xx}(a,b) \neq 0$ by our construction of $\eta$, we conclude that the differential is surjective.
Note that if $f_{yy}(a,b)=0$ (which means in turn that $f_{xx}(a,b)\neq0$), then
we would have defined $\eta(x,y):= (y-b)^2$. In that case, we would get{\samepage
\begin{align*}
\{{\rm d}\varphi|_{(f,(a,b))}\}(\gamma^{\prime}(0))& = f_{xx}(a,b) \eta_{yy}(a,b).
\end{align*}
This is again nonzero. This completes the proof of the proposition
for $n = 0$.}

The rest of the proof (for $n \geq 1$)
is now identical to the proof of
Proposition \ref{prp_smth_mfld_with_A1f_ag}. The only change we need to make is in
equation \eqref{theta_defn} where we define $\theta$. That definition is replaced by
\begin{align}
\label{theta_defn_ag}
\theta(x,y)&:= \begin{cases} (y-b)^3 &  \text{if} \  b \neq 0, \\
              (x-a)^3 &  \text{if} \ b =0, \ a\neq 0.
       \end{cases}
\end{align}
Modulo that redefinition of $\theta$, the proof is identical.
\end{proof}

\begin{proof}[Proof of Proposition \ref{prp_smth_mfld_with_A2L_ag}]
We show that
$\mathsf{A}^{\mathsf{L}}_2\overline{\mathsf{T}}_{k_1}\dots \overline{\mathsf{T}}_{k_n}$
is a codimension one submanifold of~\smash{$\overline{\mathsf{A}^{\mathsf{L}}_1}\overline{\mathsf{T}}_{k_1}\dots \overline{\mathsf{T}}_{k_n}$}.
We first prove it for $n = 0$, i.e., we show that
$\mathsf{A}^{\mathsf{L}}_2$ is smooth codimension one submanifold of
\smash{$\overline{\mathsf{A}^{\mathsf{L}}_1}$}.
This is identical to the first part of the
proof of Proposition \ref{prp_smth_mfld_with_A2f_ag}, where we show that
$\mathsf{A}^{\mathsf{F}}_2$ is smooth codimension one submanifold of
\smash{$\overline{\mathsf{A}^{\mathsf{F}}_1}$}.

The proof for $n\geq 1$ is also identical to the second part of the proof of
Proposition \ref{prp_smth_mfld_with_A2f_ag}. The only point to note here is that
now since all the points lie on the line, $\theta$ is unambiguously defined
via equation \eqref{theta_defn_ag}, i.e., $\theta(x, y) := (x-a)^3$.
\end{proof}

We are now ready to present our main results.
The following result enumerates all one cuspidal curves with tangencies
at multiple points but all are of
first order.

\begin{thm}
\label{theorem_for_many_Tks_A2F}
Let $n$ be a nonnegative integer.
Then the following equality of classes in $H_* \bigl(\mathsf{M}_{{n+1}}^1, \mathbb{R}\bigr)$ holds:
\begin{align*}
\pi^{\ast}\bigl[\mathsf{A}_2^{\mathsf{F}} \underbrace{ \mathsf{T}_{1} \mathsf{T}_{1} \dots \mathsf{T}_{1}}_{n}\bigr]\cdot
\pi_{n+1}^{\ast} [\mathsf{T}_0]  =
\bigl[\mathsf{A}_2^{\mathsf{F}} \underbrace{\mathsf{T}_{1} \mathsf{T}_{1} \dots \mathsf{T}_{1}}_{n} \mathsf{T}_{0}\bigr]
 + \sum_{i=1}^{n} 2 \pi^{\ast}\bigl[\mathsf{A}_2^{\mathsf{F}}
\underbrace{\mathsf{T}_{1} \mathsf{T}_{1} \dots \mathsf{T}_{1}}_{n}\bigr] \cdot [\Delta_{i, {n+1}}],
\end{align*}
provided $d > 2n+2$.
\end{thm}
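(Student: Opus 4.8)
The plan is to run the argument of Theorem~\ref{theorem_for_many_Tks} essentially verbatim, carrying the cuspidal factor $\mathsf{A}_2^{\mathsf{F}}$ along as an inert spectator. First I would verify the identity set-theoretically. The left-hand side is represented by the closure of the locus of configurations $(H_1, H_d, p, x_1, \dots, x_n, x_{n+1})$ in which $H_d$ has a cusp at the free point $p$, is tangent to the line $H_1$ to first order at the $n$ distinct points $x_1, \dots, x_n$, and the last point $x_{n+1}$ lies on both $H_1$ and $H_d$. Intersecting the two cycles, there are exactly two possibilities: either $x_{n+1}$ is distinct from all of $x_1, \dots, x_n$, whose closure furnishes the first term $\bigl[\mathsf{A}_2^{\mathsf{F}}\mathsf{T}_1 \dots \mathsf{T}_1 \mathsf{T}_0\bigr]$, or $x_{n+1}$ coincides with one of the $x_i$, which produces the summands indexed by $i$. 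Crucially, since the cusp at $p$ is \emph{free} (it need not lie on the line), the locus where $x_{n+1}$, a point forced onto the line, collides with $p$ is of higher codimension and contributes no boundary term; this is precisely why, exactly as in Theorem~\ref{theorem_for_many_Tks_A1F}, no $\bigl[\Delta^1_{n+1}\bigr]$ term appears.

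Next I would promote the identity to homology. For the first term, transversality of the intersection is precisely the assertion that $\overline{\mathsf{A}^{\mathsf{F}}_2}\,\overline{\mathsf{T}}_1 \dots \overline{\mathsf{T}}_1 \overline{\mathsf{T}}_0$, regarded as a configuration with $n+1$ tangency points of total order $k = n$, is a smooth submanifold of $\mathsf{M}_{n+1}^1$; this is Proposition~\ref{prp_smth_mfld_with_A2f_ag} applied with the bound $d \geq k + (n+1) + 2 = 2n+3$, which is exactly the hypothesis $d > 2n+2$. The pullback classes $\bigl[\mathsf{A}_2^{\mathsf{F}}\mathsf{T}_1 \dots \mathsf{T}_1\bigr]$ (with $n$ tangency points) are smooth already for $d \geq 2n+2$, so they are well defined here. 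For the summands I would compute the local multiplicity of the collision $x_{n+1} = x_i$. This is the same computation carried out in the proof of Theorem~\ref{theorem_for_many_Tks}: placing the $\mathsf{T}_1$ point at the origin with the line as the $x$-axis, the evaluation map $\varphi(f,x) = f(x,0)$ vanishes to order $k_i + 1 = 2$ whenever the relevant leading coefficient is nonzero, a genericity condition guaranteed once the point constraints are imposed. Hence each collision contributes with multiplicity $2$, accounting for the coefficient in the sum.

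The single point requiring care, and the only place where the cusp enters, is confirming that the cusp condition disturbs neither of these two local analyses. For the transversality of the first term this is already subsumed in Proposition~\ref{prp_smth_mfld_with_A2f_ag}, where the cusp and the tangency conditions are shown to cut out a smooth manifold simultaneously. For the multiplicity computation, the cusp is imposed at the point $p$, which remains distinct from and bounded away from the colliding pair $(x_i, x_{n+1})$ throughout the degeneration, so the local model for the collision is identical to the smooth case and the cusp plays no role. I expect this spectator verification to be the only genuine, and fairly minor, obstacle; once it is in hand, the general statement follows by the same bookkeeping as in Theorem~\ref{theorem_for_many_Tks}, completing the proof.
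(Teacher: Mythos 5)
Your proposal is correct and follows exactly the route the paper takes: the paper's entire proof of Theorem~\ref{theorem_for_many_Tks_A2F} is the one-line remark that it is a generalization of Theorems~\ref{theorem_for_many_Tks} and~\ref{theorem_for_many_Tks_A1F} with an identical proof, which is precisely your ``inert spectator'' argument. Your added checks --- that Proposition~\ref{prp_smth_mfld_with_A2f_ag} with $k=n$ and $n+1$ marked points yields the bound $d>2n+2$, that each collision has multiplicity $k_i+1=2$, and that no $\bigl[\Delta^1_{n+1}\bigr]$ term arises because the free cusp meeting the line is of higher codimension --- are all consistent with the paper and in fact supply details the paper leaves implicit.
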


\begin{proof} This is simply a generalization of Theorems \ref{theorem_for_many_Tks} and
\ref{theorem_for_many_Tks_A1F}; the proof is identical.
\end{proof}

\begin{thm}
\label{theorem_for_many_Tks_A2F_2_T0}
Let $n$ be a nonnegative integer.
Then the following equality of classes in $H_*\bigl(\mathsf{M}_{{n+2}}^1, \mathbb{R}\bigr)$ holds:
\begin{gather*}
\pi^{\ast}\bigl[\mathsf{A}_2^{\mathsf{F}} \underbrace{ \mathsf{T}_{1} \mathsf{T}_{1} \dots \mathsf{T}_{1}}_{n} \mathsf{T}_0\bigr]\cdot
\pi_{n+2}^{\ast} [\mathsf{T}_0]\\
\qquad =
\bigl[\mathsf{A}_2^{\mathsf{F}} \underbrace{\mathsf{T}_{1} \mathsf{T}_{1} \dots \mathsf{T}_{1}}_{n} \mathsf{T}_{0} \mathsf{T}_0\bigr] + \sum_{i=1}^{n} 2 \pi^{\ast}\bigl[\mathsf{A}_2^{\mathsf{F}}
\underbrace{\mathsf{T}_{1} \mathsf{T}_{1} \dots \mathsf{T}_{1}}_{n} \mathsf{T}_0\bigr] \cdot [\Delta_{i, {n+2}}] \nonumber \\
 \phantom{\qquad =}{}+ \pi^{\ast}\bigl[\mathsf{A}_2^{\mathsf{F}}
\underbrace{\mathsf{T}_{1} \mathsf{T}_{1} \dots \mathsf{T}_{1}}_{n} \mathsf{T}_0\bigr] \cdot [\Delta_{{n+1}, {n+2}}], \label{many_Tks_A2F_2_T0}
\end{gather*}
provided $d > 2n+3$.
\end{thm}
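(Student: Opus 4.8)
The plan is to mimic the proof of Theorem~\ref{theorem_for_many_Tks} essentially verbatim, carrying the cuspidal constraint~$\mathsf{A}_2^{\mathsf{F}}$ along passively, exactly as was done for the node in Theorem~\ref{theorem_for_many_Tks_A1F} and for the cusp in Theorem~\ref{theorem_for_many_Tks_A2F}. First I would verify the identity set-theoretically. The class $\pi^{\ast}\bigl[\mathsf{A}_2^{\mathsf{F}}\mathsf{T}_1\cdots\mathsf{T}_1\mathsf{T}_0\bigr]$ is represented by the closure of the locus of $(H_1,H_d,p,x_1,\dots,x_{n+2})$ in which $H_d$ has a cusp at $p$, is tangent to~$H_1$ to first order at $x_1,\dots,x_n$, meets $H_1$ transversally at $x_{n+1}$, and the last point $x_{n+2}$ is free; the factor $\pi_{n+2}^{\ast}[\mathsf{T}_0]$ then forces $x_{n+2}\in H_1\cap H_d$ transversally. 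Intersecting, either $x_{n+2}$ is distinct from all the remaining marked points --- yielding the closure of $\mathsf{A}_2^{\mathsf{F}}\mathsf{T}_1\cdots\mathsf{T}_1\mathsf{T}_0\mathsf{T}_0$, the first term on the right --- or $x_{n+2}$ collides with one of $x_1,\dots,x_{n+1}$, producing the diagonal terms. A collision of $x_{n+2}$ with the cusp point $p$ is absent, because $\mathsf{A}_2^{\mathsf{F}}$ leaves the cusp off the line, so $x_{n+2}=p$ would force the cusp onto $H_1$, a locus of higher codimension contributing nothing; this is why no $\bigl[\Delta^1_{n+2}\bigr]$ term appears, in contrast with Theorem~\ref{theorem_for_many_tks_ag_pakL}.

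Next I would upgrade this to an identity of homology classes. For the first term, the intersection is transverse of multiplicity one, which is precisely the smoothness of $\mathsf{A}_2^{\mathsf{F}}\overline{\mathsf{T}}_1\cdots\overline{\mathsf{T}}_1\overline{\mathsf{T}}_0\overline{\mathsf{T}}_0$ supplied by Proposition~\ref{prp_smth_mfld_with_A2f_ag}; applying that proposition to the tangency profile $(1,\dots,1,0,0)$, which has $n+2$ marked points and $k=n$, requires $d\ge n+(n+2)+2=2n+4$, and this is the source of the stated bound $d>2n+3$. For each diagonal term I would repeat the local multiplicity computation of Theorem~\ref{theorem_for_many_Tks}: in a chart sending the colliding point to the origin and $H_1$ to the $x$-axis, the evaluation map $\varphi(f,x)=f(x,0)$ vanishes to order one more than the order of tangency, once the relevant leading coefficient is nonzero, which holds after imposing generic point constraints. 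Hence each of the $n$ collisions with a $\mathsf{T}_1$ point contributes multiplicity $1+1=2$, accounting for the factor $2$ in the middle sum, while the single collision of the two transverse points at $x_{n+1}$ and $x_{n+2}$ contributes multiplicity $0+1=1$, accounting for the last term.

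The only feature genuinely new relative to Theorem~\ref{theorem_for_many_Tks_A2F} is this last diagonal $\Delta_{n+1,n+2}$, where two $\mathsf{T}_0$ points collide and, by the collision lemma, merge into a first-order tangency with multiplicity one. I do not expect a serious obstacle: the cuspidal condition never enters the local computations, so the only delicate input is the transversality furnished by Proposition~\ref{prp_smth_mfld_with_A2f_ag}, which is exactly what dictates the hypothesis $d>2n+3$.
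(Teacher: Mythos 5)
Your proposal is correct and follows essentially the same route as the paper, which disposes of this theorem in one line as "a generalization of Theorems \ref{theorem_for_many_Tks} and \ref{theorem_for_many_Tks_A1F}; the proof is identical" — i.e., carry the cusp passively, decompose the set-theoretic intersection by which marked point (if any) $x_{n+2}$ collides with, and read off the multiplicities $k_i+1$ from the local evaluation-map computation. Your accounting of the multiplicities ($2$ for each $\mathsf{T}_1$ collision, $1$ for the $\mathsf{T}_0$ collision, no $\Delta^1_{n+2}$ term since the cusp is off the line) and of the source of the bound $d>2n+3$ via Proposition \ref{prp_smth_mfld_with_A2f_ag} matches the paper's intent.
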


\begin{proof} This is simply a generalization of Theorems \ref{theorem_for_many_Tks} and \ref{theorem_for_many_Tks_A1F}; the proof is identical.
\end{proof}

\begin{thm}\label{Tk1Tk2_etc_alt_way_A2F}
Let $n$ be a positive integer.
Then the following equality of homology classes hold in
$H_*\bigl(\mathsf{M}^1_{n+1}; \mathbb{R}\bigr)$:
\begin{align}
&\bigl[\mathsf{A}_2^{\mathsf{F}} \underbrace{\mathsf{T}_{1} \dots \mathsf{T}_{1}}_{n-1} \mathsf{T}_{0} \mathsf{T}_0\bigr]
\cdot \bigl[\Delta_{n, {n+1}}^{\mathsf{L}}\bigr]\nonumber\\
&\qquad{} = \bigl[\mathsf{A}_2^{\mathsf{F}} \underbrace{\mathsf{T}_{1} \dots\mathsf{T}_{1}}_n\bigr] \cdot \alpha
+ 3 \pi^{\ast}\bigl[\mathsf{A}_2^{\mathsf{L}} \underbrace{ \mathsf{T}_{1} \dots \mathsf{T}_{1}}_{n-1}\bigr]
\cdot [\Delta_{n, n+1}]\cdot \bigl[\Delta^1_{n+1}\bigr], \label{Tmu_T0_nu_alt_way_A2F}
\end{align}
provided $d > 2n+2$.
\end{thm}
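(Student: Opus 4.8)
The plan is to run the proof of Theorem~\ref{Tk1Tk2_etc_alt_way_A1F} essentially verbatim, replacing the free node $\mathsf{A}_1^{\mathsf{F}}$ by the free cusp $\mathsf{A}_2^{\mathsf{F}}$; the only genuinely new ingredient is the local multiplicity along the locus $\mathsf{A}_2^{\mathsf{L}}$, which will turn out to be $3$ rather than the $2$ of the nodal case. First I would reduce to the base case $n=1$, where \eqref{Tmu_T0_nu_alt_way_A2F} becomes
\[
\bigl[\mathsf{A}_2^{\mathsf{F}}\mathsf{T}_0\mathsf{T}_0\bigr]\cdot\bigl[\Delta_{12}^{\mathsf{L}}\bigr]
= \pi^*\bigl[\mathsf{A}_2^{\mathsf{F}}\mathsf{T}_1\bigr]\cdot[\Delta_{12}]
+ 3\,\pi^*\bigl[\mathsf{A}_2^{\mathsf{L}}\bigr]\cdot[\Delta_{12}]\cdot\bigl[\Delta_2^1\bigr].
\]
On the set-theoretic level I would invoke the collision analysis of Theorem~\ref{Tk1Tk2_etc_alt_way}: when the two $\mathsf{T}_0$ points come together, the first derivative of the defining polynomial along the line vanishes, so the limiting point is either a smooth point of the cuspidal curve (the locus $\mathsf{A}_2^{\mathsf{F}}\mathsf{T}_1$) or the cusp itself (the locus $\mathsf{A}_2^{\mathsf{L}}$). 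This shows that the left-hand side is contained in the right-hand side.

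For the reverse inclusion, the term $\mathsf{A}_2^{\mathsf{F}}\mathsf{T}_1$ is handled exactly as in Theorems~\ref{Tk1Tk2_etc_alt_way} and~\ref{Tk1Tk2_etc_alt_way_A1F}: every such configuration arises as a limit of $\mathsf{A}_2^{\mathsf{F}}\mathsf{T}_0\mathsf{T}_0$ because, by Proposition~\ref{prp_smth_mfld_with_A2f_ag}, $f_{10}$ is a genuine local coordinate on $\mathsf{A}_2^{\mathsf{F}}\overline{\mathsf{T}}_0$, and the smoothness asserted there forces the intersection to be transverse, giving multiplicity~$1$. Thus the coefficient of the first term on the right is~$1$, as claimed.

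The one new computation, and the part I expect to be the main obstacle, is the multiplicity along $\mathsf{A}_2^{\mathsf{L}}$. Here I would switch to affine coordinates with the line equal to the $x$-axis and the cusp at the origin. Generically the cuspidal tangent is transverse to the line---the sub-locus where it is tangent is of higher codimension and contributes nothing to a generic intersection---so after a linear shear I may take the local model $f(x,y)=(x-a)^2-(y-b)^3+\cdots$ for a curve with a cusp at $(a,b)$. Imposing that the deformed curve pass through the two marked points, placed at the origin and at $(t,0)$, gives
\[
a^2+b^3=0, \qquad (t-a)^2+b^3=0;
\]
subtracting the two yields $t=2a$ and hence the defining relation $b^3=-t^2/4$. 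This is precisely the cuspidal analogue of the nodal relation $b=\pm t/2$ appearing in the proof of Theorem~\ref{Tk1Tk2_etc_alt_way_A1F}: the degenerate Hessian of the cusp replaces the two transverse branches of the node by a single, more degenerate branch. Introducing the uniformizer $\tau$ with $b=-\tau^2$ gives $t=\pm 2\tau^3$, so that the collision parameter $t$ vanishes to order $3$; this order-of-vanishing count, which replaces the two simple branches of the nodal case, is the source of the coefficient~$3$ and is the crux of the argument. The smoothness statements of Propositions~\ref{prp_smth_mfld_with_A2f_ag} and~\ref{prp_smth_mfld_with_A2L_ag} guarantee that the cycles involved are honest submanifolds, so that this local multiplicity is well defined and computes the homological coefficient.

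Finally, the general case $n\ge 1$ follows from the base case together with the mechanism already used in Theorems~\ref{theorem_for_many_Tks} and~\ref{Tk1Tk2_etc_alt_way_A1F}: the additional tangency points $\mathsf{T}_1,\dots,\mathsf{T}_1$ are spectators for the local analysis at the colliding pair, and one uses that $f_{10}$ remains a local coordinate on the relevant stratum (Propositions~\ref{prp_smth_mfld_with_A2f_ag} and~\ref{prp_smth_mfld_with_A2L_ag}) to upgrade the set-theoretic equality to the asserted equality of homology classes.
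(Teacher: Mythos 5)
Your overall strategy coincides with the paper's: reduce to the case $n=1$ (equation \eqref{A2FT0T0_coll_cycle_ver}), decompose the collision locus set-theoretically into $\mathsf{A}_2^{\mathsf{F}}\mathsf{T}_1$ and $\mathsf{A}_2^{\mathsf{L}}$, obtain multiplicity $1$ on the first piece from the transversality statements (Proposition \ref{prp_smth_mfld_with_A2f_ag} and the local-coordinate argument carried over from Theorems \ref{Tk1Tk2_etc_alt_way} and \ref{Tk1Tk2_etc_alt_way_A1F}), compute a local multiplicity of $3$ along $\mathsf{A}_2^{\mathsf{L}}$, and let the extra $\mathsf{T}_1$ points ride along for general $n$. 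The set-theoretic part, the first-term multiplicity, and the reduction to $n=1$ are all in line with the paper.

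The gap is in the multiplicity computation, which you correctly identify as the crux. You replace the full deformation space of the cuspidal curve by the two-parameter family of \emph{translates} of the normal form, $f(x,y)=(x-a)^2-(y-b)^3+\cdots$, and compute the order of vanishing of $t$ on the resulting arc $\{a^2+b^3=0\}$. That arc is one special curve inside the cycle $\overline{\mathsf{A}_2^{\mathsf{F}}\mathsf{T}_0\mathsf{T}}_0$, and the order of vanishing of a section restricted to a special arc does not determine the multiplicity of the excess component: it tells you nothing about how many local branches the cycle has along $\mathsf{A}_2^{\mathsf{L}}$ (each branch contributes separately), nor whether a branch on which \emph{all} the Taylor coefficients of $f_t$ are allowed to vary (rather than only the cusp position) has the same asymptotics $t\sim\tau^3$ --- an arc can over- or under-estimate the true coefficient depending on its tangency to the degenerate locus and on which sheet it sits. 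The paper's proof supplies exactly this missing work: it writes the general nearby curve $f_t$ with free coefficients $u=f_{t_{01}}$, $\mathsf{P}=f_{t_{10}}$, $s$, etc., imposes the four cusp equations $f_t=(f_t)_x=(f_t)_y=H_{f_t}=0$ together with the two incidence conditions, and shows after elimination that the cusp position satisfies $\widehat{x}^{\,2}-\widehat{y}^{\,3}=0$ with a \emph{unique} local solution $(\widehat{x},\widehat{y})=\bigl(v^3,v^2\bigr)$, whence $t=\tfrac{2}{f_{t_{20}}}v^3+O\bigl(v^4\bigr)$ on the single branch (equations \eqref{a2_coll}--\eqref{t_mult_A2L}). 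Your translation model reproduces the final relation $a^2+b^3=0$, so the mechanism you describe is the right one; but without carrying out the elimination in the full coefficient space you have not shown that the incidence variety is unibranch along $\mathsf{A}_2^{\mathsf{L}}$, nor that your $\tau$ is a uniformizer of that branch, and either failure would change the coefficient $3$.
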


\begin{rem} Let us explain how to extract numbers.
On $\mathsf{M}^1_{n}$, consider the following classes, given by
\[ \alpha := y_1^{r} y_d^{s}b_1^{\nu_1} a_1^{\varepsilon_1} \dots a_n^{\varepsilon_n}
\qquad \textnormal{and} \qquad \beta := y_1^{r} y_d^{s}b_1^{\nu_1+ \varepsilon_n}
a_1^{\varepsilon_1} \dots a_{n-1}^{\varepsilon_{n-1}}.\]
Intersecting both sides of equation \eqref{Tmu_T0_nu_alt_way_A2F} gives us
\begin{align*}
\bigl[\mathsf{A}_2^{\mathsf{F}} \underbrace{\mathsf{T}_{1} \dots \mathsf{T}_{1}}_{n-1} \mathsf{T}_{0} \mathsf{T}_0\bigr]
\cdot \bigl[\Delta_{n, {n+1}}^{\mathsf{L}}\bigr]\cdot \alpha
& = \bigl[\mathsf{A}_2^{\mathsf{F}} \underbrace{\mathsf{T}_{1} \dots\mathsf{T}_{1}}_n\bigr] \cdot \alpha
+ 3 \bigl[\mathsf{A}_2^{\mathsf{L}} \underbrace{ \mathsf{T}_{1} \dots \mathsf{T}_{1}}_{n-1}\bigr]
\cdot \beta.
\end{align*}
\end{rem}
\begin{proof}
This is a generalization of Theorems \ref{Tk1Tk2_etc_alt_way} and \ref{Tk1Tk2_etc_alt_way_A1F}.
We first prove
the special case where~${n = 1}$.
Then \eqref{Tmu_T0_nu_alt_way_A2F} simplifies to
\begin{align}
\bigl[\mathsf{A}_2^{\mathsf{F}} \mathsf{T}_{0}\mathsf{T}_0\bigr] \cdot \bigl[\Delta_{12}^{\mathsf{L}}\bigr] &  =
\pi^*\bigl[\mathsf{A}_2^{\mathsf{F}} \mathsf{T}_{1}\bigr] \cdot [\Delta_{12}] +
3 \pi^*\bigl[\mathsf{A}_2^{\mathsf{L}}\bigr]\cdot [\Delta_{12}]\cdot \bigl[\Delta_{1}^1\bigr].
\label{A2FT0T0_coll_cycle_ver}
\end{align}
To prove \eqref{A2FT0T0_coll_cycle_ver},
we proceed in a similar way to how we proved
equation \eqref{A1FT0T0_coll_cycle_ver}.
The first term on the right-hand side of
\eqref{A2FT0T0_coll_cycle_ver} is justified in the same way as the first term on the
right-hand side of equation \eqref{A1FT0T0_coll_cycle_ver}. Next, we justify the
second term.

On the set-theoretic level, we need to show that every
point of $\mathsf{A}_2^{\mathsf{L}}$ arises as a limit of $\mathsf{A}_2^{\mathsf{F}}
\mathsf{T}_0 \mathsf{T}_0$. As before, we will be working in affine space. Take \smash{$f \in
\bigl(\mathsf{A}_2^{\mathsf{L}}\bigr)_{\textsf{Aff}}$}. As usual, the line is the $x$-axis and the cuspidal point is the
origin. Assume that $f_{20} \neq 0$; here $f_{ij}$ denotes the $(i, j)$-th derivative at
the origin. Let $f_t$ be a curve close to $f$ that passes through the origin and also passes through~${(t, 0)}$.
The Taylor expansion of $f_t$ is
\begin{align*}
f_{t}(x, y) ={}& uy + \mathsf{P} x + \frac{f_{t_{20}}}{2} x^2 + f_{t_{11}} xy +
\frac{1}{2}\left(s + \frac{f_{t_{11}}^2}{f_{t_{20}}} \right) y ^2 +
\frac{f_{t_{30}}}{6} x^3
\\
&+\frac{f_{t_{21}}}{2} x^2 y + \frac{f_{t_{12}}}{2} x y^2 + \frac{f_{t_{03}}}{6} y^3 +
\mathcal{E}_4(x, y),
\end{align*}
where the error term $\mathcal{E}_4(x, y)$ is fourth order in $(x, y)$. Here
$f_{t_{01}}$ is denoted by $u$ and $f_{t_{10}}$ is denoted by $\mathsf{P}$.

Now impose the condition that the curve passes through $(t, 0)$, so $f_t(t, 0) = 0$.
This implies that
\[
\mathsf{P}  =  -\frac{f_{t_{20}}}{2}t + O\bigl(t^2\bigr).
\]
Now impose the condition that the curve has a cuspidal point at $(x, y)$. This implies that
\begin{align*}
&f_{t}(x, y)  = 0, \qquad (f_{t})_x(x, y) = 0, \qquad
(f_{t})_y(x, y)= 0  \qquad \text{and} \nonumber \\
&H_{f_t}(x,y) := \bigl((f_{t})_{xx} (f_{t})_{yy} - (f_{t})_{xy}^2\bigr)(x,y)= 0.
\end{align*}
Using the conditions that $(f_{t})_x(x,y) = 0$ and $(f_{t})_y(x,y) = 0$, it follows that
\begin{gather}
\frac{f_{t_{20}}}{2} t  = yf_{t_{11}} +x f_{t_{20}} + \frac{f_{t_{12}}}{2} y^2 + f_{t_{21}} xy +
\frac{f_{t_{30}}}{2} x^2 +
\mathcal{E}_3(x, y) \qquad \text{and}\nonumber\\
-u  = \left(s+\frac{f_{t_{11}}^2}{f_{t_{20}}}\right) y + x f_{t_{11}} + \frac{f_{t_{03}}}{2} y^2 + f_{t_{12}} xy+ \frac{f_{t_{21}}}{2} x^2 +
\widetilde{\mathcal{E}}_3(x, y).\label{Hj1}
\end{gather}
Next, use the condition $H_{f_t}(x,y) = 0$ to conclude that
\[
-s = \left(f_{t_{12}} + \frac{f_{t_{11}}^2 f_{t_{30}}}{f_{t_{20}}^2} \right) x + \left(f_{t_{30}}
-\frac{2 f_{t_{11}} f_{t_{12}}}{f_{t_{20}}} + \frac{f_{t_{11}}^2 f_{t_{21}}}{f_{t_{20}}^2} \right) y.
\]
Finally, plugging all these in $f_t(x, y) = 0$, it follows that
\begin{gather}
-\frac{(f_{t_{20}}x + f_{t_{11}} y)^2}{2 f_{t_{20}}} - \frac{f_{t_{30}}}{3} x^3 -f_{t_{21}}x^2 y
+ xy^2 \left(\frac{f_{t_{11}}^2 f_{t_{30}}}{2 f_{t_{20}}^2} - \frac{f_{t_{12}}}{2}\right) \nonumber \\
\qquad+ \left( \frac{f_{t_{03}}}{6} - \frac{f_{t_{11}} f_{t_{12}}}{f_{t_{20}}}+
\frac{f_{t_{11}}^2 f_{t_{21}}}{2 f_{t_{20}}^2} \right) y^3 = 0. \label{a2_coll}
\end{gather}
Next, make the following change of coordinates:
\begin{align}
\widehat{x} & :=  f_{t_{20}}x + f_{t_{11}} y \qquad \textnormal{and} \qquad \widehat{y}
 :=  y. \label{hat_x_Hj3}
\end{align}
Note that this is a valid change of coordinate because $f_{t_{20}} \not= 0$.
Under a further genericity assumption on the third derivatives, we can make a change of coordinates
(centred around the origin) so that \eqref{a2_coll} can be rewritten as
\begin{align}\label{ez}
\widehat{x}^2 - \widehat{y}^3 &  =  0.
\end{align}
Equation \eqref{ez} has exactly one solution close to the origin, namely
\begin{align}
\widehat{x}&  =  v^3 \qquad \textnormal{and} \qquad \widehat{y}  =  v^2.\label{v_Hj4}
\end{align}
Using \eqref{Hj1}, \eqref{hat_x_Hj3} and \eqref{v_Hj4}, it follows that
\begin{align}
t &  =  \frac{2}{f_{t_{20}}} v^3 + O\bigl(v^4\bigr). \label{t_mult_A2L}
\end{align}
Since we are setting $t$ to be equal to zero to obtain the $\mathsf{A}_2^{\mathsf{L}}$ point,
\eqref{t_mult_A2L} implies that the multiplicity of the intersection in the second term on the
right-hand side of \eqref{A2FT0T0_coll_cycle_ver} is $3$. This proves \eqref{A2FT0T0_coll_cycle_ver}.
Note that the assumption $f_{t_{20}} \not= 0$ and the
genericity assumption of the third derivative is valid, since to compute the multiplicity of
intersections, we will intersect with a~generic cycle. The general case now follows as before.
\end{proof}

\begin{thm}
\label{Tk1Tk2_etc_alt_way_A2F_ah}
Let $n$ be a nonnegative integer.
Then, the following equality of homology classes hold in
$H_*\bigl(\mathsf{M}_{n+2}^1, \mathbb{R}\bigr)$:
\begin{align}
\bigl[\mathsf{A}_2^{\mathsf{L}} \underbrace{\mathsf{T}_{1} \dots \mathsf{T}_{1}}_{n} \mathsf{T}_{0} \mathsf{T}_0\bigr]
\cdot \bigl[\Delta_{{n+1}, {n+2}}^{\mathsf{L}}\bigr] &  =
\pi^{\ast}\bigl[\mathsf{A}_2^{\mathsf{L}} \underbrace{\mathsf{T}_{1} \dots\mathsf{T}_{1}}_{n+1}\bigr] \cdot
[\Delta_{n+1, n+2}], \label{Tmu_T0_nu_alt_way_A2F_ah}
\end{align}
provided $d > 2n+4$.
\end{thm}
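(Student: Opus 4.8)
The plan is to prove \eqref{Tmu_T0_nu_alt_way_A2F_ah} as a decorated instance of the basic collision identity of Theorem~\ref{Tk1Tk2_etc_alt_way} in the case $k_n=1$: one collides the two $\mathsf{T}_0$ points indexed $n+1$ and $n+2$ against the fixed background $\mathsf{A}_2^{\mathsf{L}}\mathsf{T}_1\cdots\mathsf{T}_1$ consisting of a cusp on the line together with $n$ first-order tangencies. As in every collision theorem of this paper, I would first establish the identity set-theoretically and then determine the multiplicity of the collision divisor. The single delicate point, which distinguishes this from the free-cusp identity \eqref{A2FT0T0_coll_cycle_ver}, is that the right-hand side carries \emph{no} excess term; explaining this is the crux.

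For the set-theoretic identity I would check both inclusions, just as in Theorem~\ref{Tk1Tk2_etc_alt_way}. In the forward direction, if $(H_1,H_d,p,x_1,\dots,x_n,q,q)$ lies in $\overline{\mathsf{A}_2^{\mathsf{L}}\mathsf{T}_1\cdots\mathsf{T}_1\mathsf{T}_0\mathsf{T}_0}$, then, working in a chart that puts $q$ at the origin and the line along the $x$-axis, the merging of the two transverse points forces $f_{00}=f_{10}=0$, so $q$ becomes a first-order tangency; the cusp at $p$ and the remaining tangencies sit at distinct points and are irrelevant to this purely local computation. For the reverse inclusion I would realize each $\mathsf{A}_2^{\mathsf{L}}\mathsf{T}_1\cdots\mathsf{T}_1$ configuration (now with $n+1$ tangencies) as a limit of $\mathsf{A}_2^{\mathsf{L}}\mathsf{T}_1\cdots\mathsf{T}_1\mathsf{T}_0\mathsf{T}_0$ by switching on the single coefficient $f_{10}$ at the chosen tangency, exactly as in \eqref{ft_10}. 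That this one-parameter deformation stays inside the cuspidal-on-line stratum is guaranteed by Proposition~\ref{prp_smth_mfld_with_A2L_ag}, whose proof exhibits $f_{10}$ as a genuine local coordinate on $\mathsf{A}_2^{\mathsf{L}}\mathsf{T}_1\cdots\mathsf{T}_1\overline{\mathsf{T}}_0$.

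The main obstacle is to justify the absence of an excess term, and the contrast with the free cusp is the clearest way to see it. In \eqref{A2FT0T0_coll_cycle_ver} the two colliding points are free to flow onto the free cusp, dragging it onto the line; the resulting $\mathsf{A}_2^{\mathsf{L}}$ point generically meets the line with multiplicity two, which is exactly supplied by the two merged transverse points, so the limit remains a genuine cusp and contributes (with collision multiplicity three). Here, by contrast, the cusp already lies on the line and already absorbs intersection multiplicity two; for the two colliding $\mathsf{T}_0$ points to flow into it as well would demand intersection multiplicity four at $p$. Since a cusp meets any line in multiplicity at most three, this cannot happen while the singularity at $p$ remains a cusp: any such limit forces $p$ to degenerate to something strictly worse, for instance a tacnode whose tangent is the line itself, meeting the curve with multiplicity four. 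Consequently the ``collision-at-the-cusp'' locus lies in a stratum of strictly higher codimension than $\mathsf{A}_2^{\mathsf{L}}\mathsf{T}_1\cdots\mathsf{T}_1$ and cannot enter the codimension-matched identity \eqref{Tmu_T0_nu_alt_way_A2F_ah}. This is precisely the mechanism noted in the remark after Theorem~\ref{Tk1Tk2_etc_alt_way}, where a putative nodal-on-line term is invisible for sitting one codimension too deep; a clean bookkeeping using \eqref{a2_line_cycle} together with the codimensions from Propositions~\ref{prp_smth_mfld_with_A2f_ag} and \ref{prp_smth_mfld_with_A2L_ag} makes the exclusion rigorous.

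With no excess locus to account for, the collision multiplicity is the generic one. In the chart above, the separation parameter $t$ is tied to the revived coefficient by the linear relation $t=-\tfrac{2}{f_{20}}f_{10}+O(f_{10}^2)$ of \eqref{ft_10_t}, so $t$ vanishes to first order and the line-diagonal class $\bigl[\Delta^{\mathsf{L}}_{n+1,n+2}\bigr]$ meets the incoming family transversally, with weight one. This leaves precisely the single term $\pi^{\ast}\bigl[\mathsf{A}_2^{\mathsf{L}}\mathsf{T}_1\cdots\mathsf{T}_1\bigr]\cdot[\Delta_{n+1,n+2}]$ on the right, establishing \eqref{Tmu_T0_nu_alt_way_A2F_ah}. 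The hypothesis $d>2n+4$ serves only to keep every ambient stratum smooth via Proposition~\ref{prp_smth_mfld_with_A2L_ag}; the passage from the base case $n=0$ to general $n$ is then identical to Theorem~\ref{Tk1Tk2_etc_alt_way}, the background $\mathsf{T}_1$ points lying at distinct locations that never interact with the collision.
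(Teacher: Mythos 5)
Your proposal is correct and takes essentially the same route as the paper, whose entire proof is the declaration that this is a generalization of Theorems~\ref{Tk1Tk2_etc_alt_way} and~\ref{Node_whose_one_brunch_tangent at multi} with ``the same'' proof: collide the two $\mathsf{T}_0$ points against the fixed $\mathsf{A}_2^{\mathsf{L}}\mathsf{T}_1\cdots\mathsf{T}_1$ background, verify both set-theoretic inclusions using $f_{10}$ as a local coordinate on the relevant smooth stratum, and read off multiplicity one from the analogue of \eqref{ft_10_t}. The only substantive content you add beyond the paper's one-sentence proof is the explicit justification for the absence of an excess term --- that the colliding pair cannot land on the cusp without forcing local intersection multiplicity four, which a genuine cusp cannot support, pushing any such limit into strata of too high codimension --- and that argument is sound and consistent with the mechanism the paper invokes in the remark following Theorem~\ref{Tk1Tk2_etc_alt_way}.
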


\begin{proof} This is a generalization of Theorems \ref{Tk1Tk2_etc_alt_way} and
\ref{Node_whose_one_brunch_tangent at multi}; the proof is the same.
\end{proof}

\begin{rem} Let us explain how to extract numbers. On $\mathsf{M}^1_{n+2}$
consider the following classes, given by
\[\alpha := y_1^{r} y_d^{s}b_1^{\nu_1} a_1^{\varepsilon_1} \dots a_{n+2}^{\varepsilon_{n+2}}
\qquad \text{and} \qquad \beta := y_1^{r} y_d^{s}b_1^{\nu_1}
a_1^{\varepsilon_1} \dots a_{n+1}^{\varepsilon_{n+1} + \varepsilon_{n+2}}. \]
Intersecting both sides of \eqref{Tmu_T0_nu_alt_way_A2F_ah} with $\alpha$
gives us
\[
\bigl[\mathsf{A}_2^{\mathsf{L}} \underbrace{\mathsf{T}_{1} \dots \mathsf{T}_{1}}_{n} \mathsf{T}_{0} \mathsf{T}_0\bigr]
\cdot \bigl[\Delta_{{n+1}, {n+2}}^{\mathsf{L}}\bigr] \cdot \alpha  =
\bigl[\mathsf{A}_2^{\mathsf{L}} \underbrace{\mathsf{T}_{1} \dots\mathsf{T}_{1}}_{n+1}\bigr] \cdot \beta. \label{Tmu_T0_nu_alt_way_A2F_ah_Num}
\]
\end{rem}
 The next theorem is as follows.

\begin{thm}
\label{theorem_for_many_tks_ag_pakL_A2}
Let $n$ be a nonnegative integer.
Then the following equality of classes in $H_*(\mathsf{M}_{{n+1}}^1, \mathbb{R})$ holds:
\begin{gather}
\pi^{\ast}\bigl[\mathsf{A}_2^{\mathsf{L}} \underbrace{\mathsf{T}_{1}\mathsf{T}_{1} \dots \mathsf{T}_{1}}_{n}\bigr]\cdot
\pi_{n+1}^{\ast}[\mathsf{T}_0] \nonumber\\[-1mm]
 \qquad =
\bigl[\mathsf{A}_2^{\mathsf{L}} \underbrace{\mathsf{T}_{1}\mathsf{T}_{1} \dots \mathsf{T}_{1}}_{n} \mathsf{T}_{0}\bigr]
 + \sum_{i=1}^{n} 2 \pi^{\ast}\bigl[\mathsf{A}_2^{\mathsf{L}} \underbrace{\mathsf{T}_{1}\mathsf{T}_{1} \dots \mathsf{T}_{1}}_{n}\bigr] \cdot [\Delta_{i, {n+1}}] \nonumber \\
\phantom{ \qquad =}{} + 2 \pi^{\ast} \bigl[\mathsf{A}_2^{\mathsf{L}} \underbrace{\mathsf{T}_{1}\mathsf{T}_{1} \dots \mathsf{T}_{1}}_{n}\bigr] \cdot
\bigl[\Delta_{{n+1}}^1\bigr],\label{many_tks_ag_pakL_A2}
\end{gather}
provided $d > 2n+3$.
\end{thm}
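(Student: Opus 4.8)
The plan is to follow the template established in Theorems~\ref{theorem_for_many_Tks} and~\ref{theorem_for_many_tks_ag_pakL}: first establish \eqref{many_tks_ag_pakL_A2} on the set-theoretic level, and then account for the multiplicity attached to each component. As in those proofs, the left-hand side is represented by the closure of the locus of $(H_1,H_d,p,x_1,\dots,x_n,x_{n+1})$ where $H_d$ has a cusp at $p$, the cusp lies on the line $H_1$, the curve is tangent to first order to $H_1$ at each $x_i$, and the extra point $x_{n+1}$ is free but is then forced (by $\pi_{n+1}^{\ast}[\mathsf{T}_0]$) to lie on both $H_1$ and $H_d$. Taking the set-theoretic intersection, there are exactly three mutually exclusive possibilities for $x_{n+1}$: it is distinct from $p,x_1,\dots,x_n$, giving the term $[\mathsf{A}_2^{\mathsf{L}}\mathsf{T}_1\cdots\mathsf{T}_1\mathsf{T}_0]$; it collides with one of the first-order tangency points $x_i$, giving a contribution supported on $[\Delta_{i,n+1}]$; or it collides with the cuspidal point $p$, giving a contribution supported on $[\Delta_{n+1}^1]$.

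For the first two kinds of terms, nothing new is needed. Transversality justifying multiplicity one for $[\mathsf{A}_2^{\mathsf{L}}\mathsf{T}_1\cdots\mathsf{T}_1\mathsf{T}_0]$ follows from the smoothness assertion of Proposition~\ref{prp_smth_mfld_with_A2L_ag}, and the multiplicity $k_i+1=2$ for each diagonal $[\Delta_{i,n+1}]$ is computed exactly as in the proof of Theorem~\ref{theorem_for_many_Tks}: placing the colliding point $x_i$ at the origin with the line as the $x$-axis, the evaluation map $f(x,0)$ vanishes to order $k_i+1=2$ there.

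The one genuinely new computation is the multiplicity $2$ attached to $[\Delta_{n+1}^1]$, i.e.\ when the moving point $x_{n+1}$ collides with the cusp. I would treat the base case $n=0$ first, where \eqref{many_tks_ag_pakL_A2} reads $\pi^{\ast}[\mathsf{A}_2^{\mathsf{L}}]\cdot\pi_1^{\ast}[\mathsf{T}_0]=[\mathsf{A}_2^{\mathsf{L}}\mathsf{T}_0]+2\,\pi^{\ast}[\mathsf{A}_2^{\mathsf{L}}]\cdot[\Delta_1^1]$. Switching to affine space, I would place the cusp at the origin and take the line to be the $x$-axis. The cusp condition forces $f_{00}=f_{10}=f_{01}=0$ together with a degenerate Hessian $f_{20}f_{02}-f_{11}^2=0$, so the quadratic part is a perfect square and the cuspidal tangent direction is $(f_{11},-f_{20})$. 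Since the line is generic through the cusp (we ultimately intersect with generic point constraints), it is transverse to the cuspidal tangent, which is exactly the condition $f_{20}\neq 0$ already used in the proof of Theorem~\ref{Tk1Tk2_etc_alt_way_A2F}. Consequently the evaluation map $\varphi(f,x):=f(x,0)=\tfrac{f_{20}}{2}x^2+O(x^3)$ vanishes to order exactly $2$ at the origin, which is precisely the multiplicity of the collision and yields the factor $2$ on the last term.

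The main obstacle is this last multiplicity computation, and specifically the clean justification that $f_{20}\neq 0$ for the generic member of $\mathsf{A}_2^{\mathsf{L}}$ meeting the imposed constraints; without it the order of vanishing of $f(x,0)$ could jump from $2$ to $3$, namely the order one would see were the line the cuspidal tangent itself. Granting this, the passage from $n=0$ to general $n$ is routine and proceeds exactly as in Theorems~\ref{theorem_for_many_Tks} and~\ref{theorem_for_many_tks_ag_pakL}: the presence of the additional first-order tangency points $x_1,\dots,x_n$ does not interfere with the local analysis at $p$, and the hypothesis $d>2n+3$ is what ensures, via Proposition~\ref{prp_smth_mfld_with_A2L_ag}, that all the relevant closures are smooth submanifolds and that the deformations witnessing each collision stay inside them.
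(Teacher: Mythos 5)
Your proposal is correct and follows essentially the same route as the paper: the paper also reduces to the case $n=0$, places the cusp at the origin with the line as the $x$-axis, and reads off the multiplicity $2$ from the order of vanishing of the evaluation map $\varphi(f,x)=f(x,0)=\tfrac{f_{20}}{2}x^2+O\bigl(x^3\bigr)$, justifying $f_{20}\neq 0$ by the genericity of the cycles one ultimately intersects with. Your additional remark identifying $f_{20}\neq 0$ with transversality of the line to the cuspidal tangent $(f_{11},-f_{20})$ is a nice geometric gloss but does not change the argument.
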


\begin{proof}
This is a generalization of Theorem \ref{theorem_for_many_tks_ag_pakL}.
The new thing we need to show is to
justify the third term on the right-hand side of \eqref{many_tks_ag_pakL_A2}.

We first prove the special case of Theorem \ref{theorem_for_many_tks_ag_pakL_A2} where $n = 0$. In
this case, \eqref{many_tks_ag_pakL_A2} simplifies to
\begin{align}
\pi^*\bigl[\mathsf{A}_2^{\mathsf{L}}\bigr] \cdot \pi_1^*[\mathsf{T}_0] & =  \bigl[\mathsf{A}_2^{\mathsf{L}} \mathsf{T}_0\bigr] +
2 \pi^*\bigl[\mathsf{A}_2^{\mathsf{L}}\bigr]\cdot \bigl[\Delta_{1}^1\bigr]. \label{pa1_t0_sp_a2}
\end{align}
We prove equation \eqref{pa1_t0_sp_a2} in a similar way to how we proved
equation \eqref{pa1_t0_sp}. The justification for the first term on the
right-hand side of \eqref{pa1_t0_sp_a2} is same as the justification for the first term on the
right-hand side of equation \eqref{pa1_t0_sp}. The new thing we need to do is justify the
second term.\looseness=-1

On the set-theoretic level, the second term is clear. What we need to do now is justify the multiplicity
of $2$.
For convenience, set the $\mathsf{A}_2^{\mathsf{L}}$ point to
be the origin. We are now going to
study the multiplicity with which the evaluation map vanishes at the origin.
Note that the curve $f$ is such that $f_{00}$, $f_{10}$, $f_{01}$, $f_{20}f_{02} - f_{11}^2$ all vanish.
Further assume that $f_{20} \neq 0$. Consequently, the curve is given by
\begin{align*}
f(x, y)& = \frac{f_{20}}{2} x^2 + f_{11} xy + \frac{f_{11}^2}{f_{02}} y^2 + \mathcal{R}(x, y),
\end{align*}
where $\mathcal{R}(x, y)$ is a remainder term of order three. The order of vanishing of the evaluation map should be $2$.
To see why that is so, consider the evaluation map
\begin{align*}
\varphi(f, x)& := f(x, 0) = \frac{f_{20}}{2} x^2 + O\bigl(x^3\bigr).
\end{align*}
Since by assumption $f_{20} \neq 0$, we conclude that the order of vanishing is $2$. The assumption that
$f_{20} \neq 0$ is valid, since to compute the multiplicity of intersections, we intersect with generic cycles.
This proves \eqref{pa1_t0_sp_a2} on the level of homology. The general statement now follows similarly.
\end{proof}

\begin{thm}\label{theorem_for_many_tks_ag_pakL_A2_T0}
Let $n$ be a nonnegative integer.
Then the following equality of classes in $H_*\bigl(\mathsf{M}_{{n+2}}^1, \mathbb{R}\bigr)$ holds:
\begin{gather*}
\pi^{\ast}\bigl[\mathsf{A}_2^{\mathsf{L}} \underbrace{\mathsf{T}_{1}\mathsf{T}_{1} \dots \mathsf{T}_{1}}_{n} \mathsf{T}_0\bigr]\cdot
\pi_{n+2}^{\ast}[\mathsf{T}_0] \\
\qquad =
\bigl[\mathsf{A}_2^{\mathsf{L}} \underbrace{\mathsf{T}_{1}\mathsf{T}_{1} \dots \mathsf{T}_{1}}_{n} \mathsf{T}_{0} \mathsf{T}_{0}\bigr]
+ 2 \pi^{\ast} \bigl[\mathsf{A}_2^{\mathsf{L}} \underbrace{\mathsf{T}_{1}\mathsf{T}_{1} \dots \mathsf{T}_{1}}_{n} \mathsf{T}_0\bigr]
\cdot \bigl[\Delta_{n+2}^1\bigr]  \\[-1mm]
\phantom{\qquad =}{} + \pi^{\ast} \bigl[\mathsf{A}_2^{\mathsf{L}} \underbrace{\mathsf{T}_{1}\mathsf{T}_{1} \dots \mathsf{T}_{1}}_{n} \mathsf{T}_0\bigr]
\cdot \bigl[\Delta_{n+1, {n+2}}\bigr]
  + \sum_{i=1}^{n} 2 \pi^{\ast}\bigl[\mathsf{A}_2^{\mathsf{L}} \underbrace{\mathsf{T}_{1}\mathsf{T}_{1} \dots
\mathsf{T}_{1}}_{n} \mathsf{T}_0\bigr] \cdot [\Delta_{i, {n+2}}],
\end{gather*}
provided $d > 2n+4$.
\end{thm}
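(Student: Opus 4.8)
The plan is to treat this as a direct generalization of Theorem \ref{theorem_for_many_Tks} and its cuspidal analogue Theorem \ref{theorem_for_many_Tks_A2F_2_T0}, the single genuinely new ingredient being that the cusp now lies on the designated line. First I would establish the identity set-theoretically. In $\mathsf{M}_{n+2}^1$ the class $\pi^{\ast}\bigl[\mathsf{A}_2^{\mathsf{L}} \mathsf{T}_1^n \mathsf{T}_0\bigr]$ is represented by the closure of the locus consisting of a line, a curve, a cusp $p$ lying on the line, and $n+1$ distinct marked points $x_1, \dots, x_{n+1}$ with $x_1, \dots, x_n$ first-order tangencies and $x_{n+1}$ a transverse intersection, together with a free last point $x_{n+2}$; intersecting with $\pi_{n+2}^{\ast}[\mathsf{T}_0]$ forces $x_{n+2}$ onto both the line and the curve. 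The set-theoretic intersection then splits according to whether $x_{n+2}$ is distinct from $p, x_1, \dots, x_{n+1}$, which yields the first term $\bigl[\mathsf{A}_2^{\mathsf{L}} \mathsf{T}_1^n \mathsf{T}_0 \mathsf{T}_0\bigr]$, or collides with one of them.

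The essential difference from the free-cusp situation of Theorem \ref{theorem_for_many_Tks_A2F_2_T0} is that here the cusp $p$ already lies on the line, so the point $x_{n+2}$ (which is constrained to the line) may legitimately collide with $p$. This collision produces the term $2\pi^{\ast}\bigl[\mathsf{A}_2^{\mathsf{L}} \mathsf{T}_1^n \mathsf{T}_0\bigr] \cdot \bigl[\Delta_{n+2}^1\bigr]$, which has no counterpart in Theorem \ref{theorem_for_many_Tks_A2F_2_T0} precisely because a free cusp is generically off the line.

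Next I would upgrade the set-theoretic identity to a homological one by pinning down each multiplicity. Transversality of the first term, so that $\bigl[\mathsf{A}_2^{\mathsf{L}} \mathsf{T}_1^n \mathsf{T}_0 \mathsf{T}_0\bigr]$ occurs with coefficient one, follows from Proposition \ref{prp_smth_mfld_with_A2L_ag} applied with $k = n$, which is where the hypothesis $d > 2n+4$ enters. For the collision of $x_{n+2}$ with a first-order tangency point $x_i$, the local evaluation map vanishes to order $k_i + 1 = 2$ exactly as in the proof of Theorem \ref{theorem_for_many_Tks}, giving the coefficient $2$ in $\sum_{i=1}^n 2\pi^{\ast}[\cdots]\cdot[\Delta_{i,n+2}]$; for the collision with the transverse point $x_{n+1}$ the same computation gives order of vanishing $0 + 1 = 1$, producing $\pi^{\ast}[\cdots]\cdot[\Delta_{n+1,n+2}]$ with coefficient one.

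The only new multiplicity is that of the collision of $x_{n+2}$ with the cusp, and this is exactly the computation already carried out for equation \eqref{pa1_t0_sp_a2}. Placing the cusp at the origin with the line as the $x$-axis, the curve has local form $f(x,y) = \frac{f_{20}}{2} x^2 + f_{11} xy + \frac{f_{11}^2}{f_{02}} y^2 + \mathcal{R}(x,y)$ with $\mathcal{R}$ of order three, so the restricted evaluation map $\varphi(f,x) = f(x,0) = \frac{f_{20}}{2} x^2 + O(x^3)$ vanishes to order $2$ once $f_{20} \neq 0$, which is valid since we intersect against generic cycles. This yields the coefficient $2$ in the cusp term. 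The main (and only nonroutine) point is therefore this cusp-on-line collision; with it in hand, the passage to general $n$ is a verbatim repetition of the arguments in Theorems \ref{theorem_for_many_Tks}, \ref{theorem_for_many_Tks_A2F_2_T0} and \ref{theorem_for_many_tks_ag_pakL_A2}.
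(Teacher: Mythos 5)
Your proposal is correct and follows the same route as the paper, which simply refers back to the proof of Theorem \ref{theorem_for_many_tks_ag_pakL_A2}: the only non-routine multiplicity is the collision of the last marked point with the cusp on the line, and your order-two vanishing computation of the evaluation map $\varphi(f,x)=f(x,0)=\tfrac{f_{20}}{2}x^2+O(x^3)$ is exactly the paper's justification of the coefficient $2$ on the $\bigl[\Delta_{n+2}^1\bigr]$ term. The remaining coefficients (transversality for the leading term via Proposition \ref{prp_smth_mfld_with_A2L_ag}, multiplicity $k_i+1$ for collisions with tangency points) are handled as in the earlier theorems, just as you describe.
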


\begin{proof}
The proof is the same as that of Theorem \ref{theorem_for_many_tks_ag_pakL_A2}.
\end{proof}

Finally, we note that
the expression for the homology class $\bigl[\mathsf{A}_2^{\mathsf{F}}\bigr]$
\big(which is an element of~${H_{*}\bigl(\mathsf{M}^1_{0}, \mathbb{R}\bigr)}$\big) can be computed using the
results of \cite{R.M}, given by
\[
\bigl[\mathsf{A}_2^{\mathsf{F}}\bigr]  = \bigl(12d^2-36d+24\bigr) y_d^2 b_1^2 + \bigl(8d-12\bigr)y_d^3 b_1 + 2 y_d^4.
\]
Using equation \eqref{a2_line_cycle}, we can compute
$\bigl[\mathsf{A}_2^{\mathsf{L}}\bigr]$ as well.
Hence, using the same reasoning given at the end of Section \ref{one_nodal_tang_comp}
and using the theorems proved in this section, we conclude that all intersection numbers
involving \smash{$\bigl[\mathsf{A}_2^{\mathsf{F}} \underbrace{\mathsf{T}_{1} \dots \mathsf{T}_{1}}_{n}\bigr]$}
can be computed.

\section{Counting singular curves}\label{singularity_in_our_format}
In this section, we derive our Main Result \ref{m_rslt2}.
We begin by asking the following question:
{\it How many $1$-nodal degree $d$ curves are there in $\mathbb{P}^2$
that pass through $\delta_d-1$ $($see \eqref{eb}$)$ generic points?}\looseness=1

We solve the above question by treating it as a special case of the
following more general question: how many pairs are there, a line and a degree
$d$ curve, such that the line passes through $m$ generic points and the
curve passes through $\delta_d-m$
points and the curve has a nodal point lying on the line?
Note that the answer to this question is automatically zero if $m \geq 3$, since a
line does not pass through three generic points. Fixing the line corresponds to setting~$m$ to be equal to $2$ (since a unique line passes through two points).
The answer to our original question is obtained when $m=1$.

In order to solve the above question, we first consider
the space of curves with a marked point~$p$ at which the curve is tangent to the line.
On this space, we
impose the condition that the derivative of the curve in the normal direction vanishes.
That precisely means that the curve has a node at $p$.
\begin{figure}[h!]
\centering
\includegraphics[scale = .7]{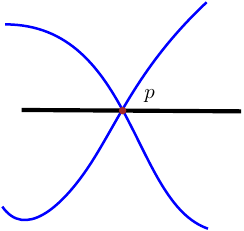}
\end{figure}

Taking the derivative in the normal direction induces a section of an appropriate
bundle.
This bundle is described in detail in Section~\ref{node_new}
(see equation \eqref{bundle_normal}).
Hence, computation of the Euler class of this relevant bundle yields the desired number.

We now explain how to enumerate curves with one
tacnode. Before that there is small digression. We first try to solve the question
of enumerating curves with two nodes. However, both the nodes are required to lie on a given line. Consider
the space of curves with three marked points $x_1$, $x_2$ and $x_3$, such that all the three points lie on the
curve and the line while the curve has a node at the point $x_1$. Pictorially, such a curve looks
as follows:
\begin{figure}[h!]\centering
\includegraphics[scale = .75]{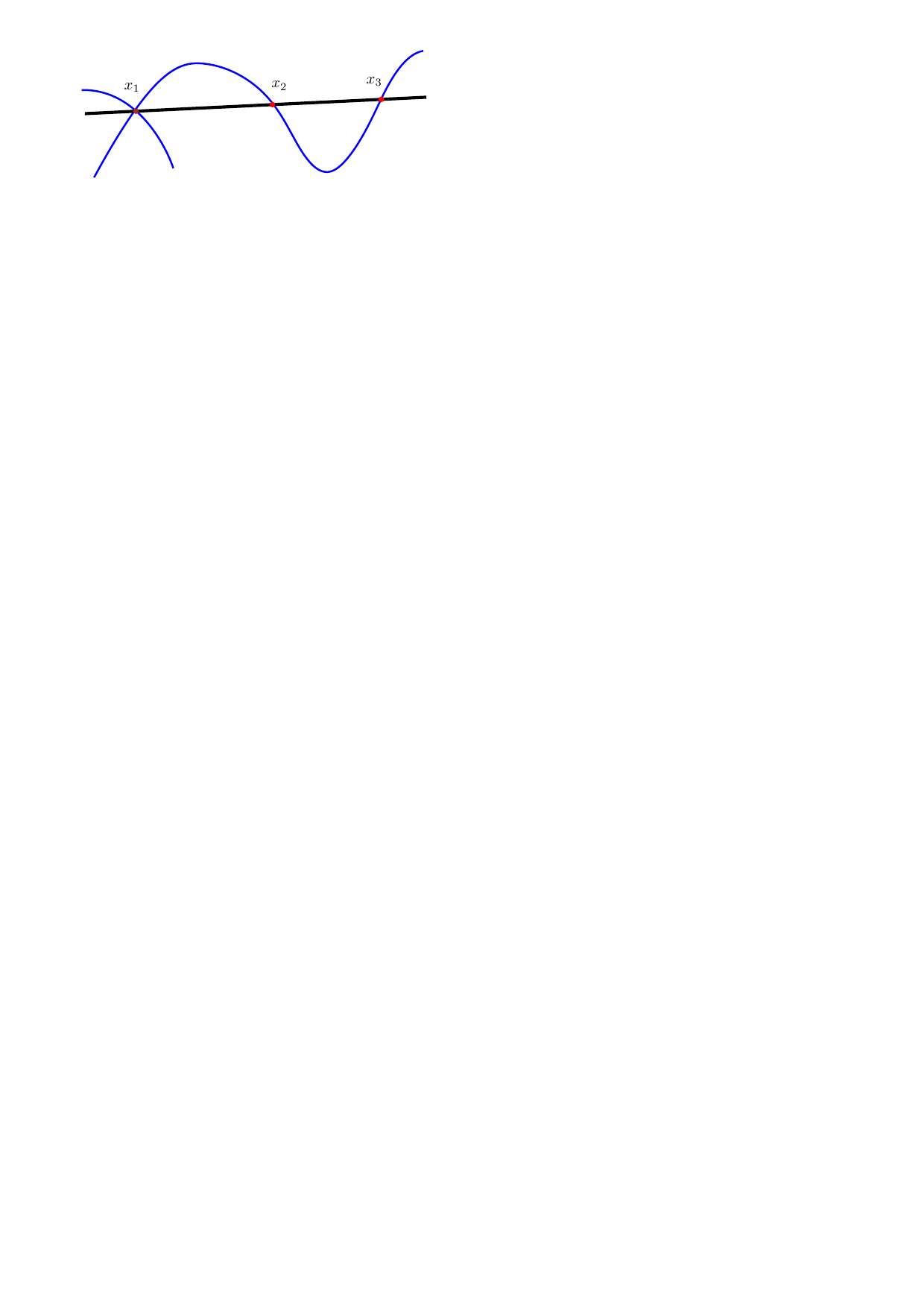}
\end{figure}

{\samepage Now impose the condition that $x_2$ and $x_3$ become equal.
This results in the following objects:
\begin{center}
\includegraphics[scale = .75]{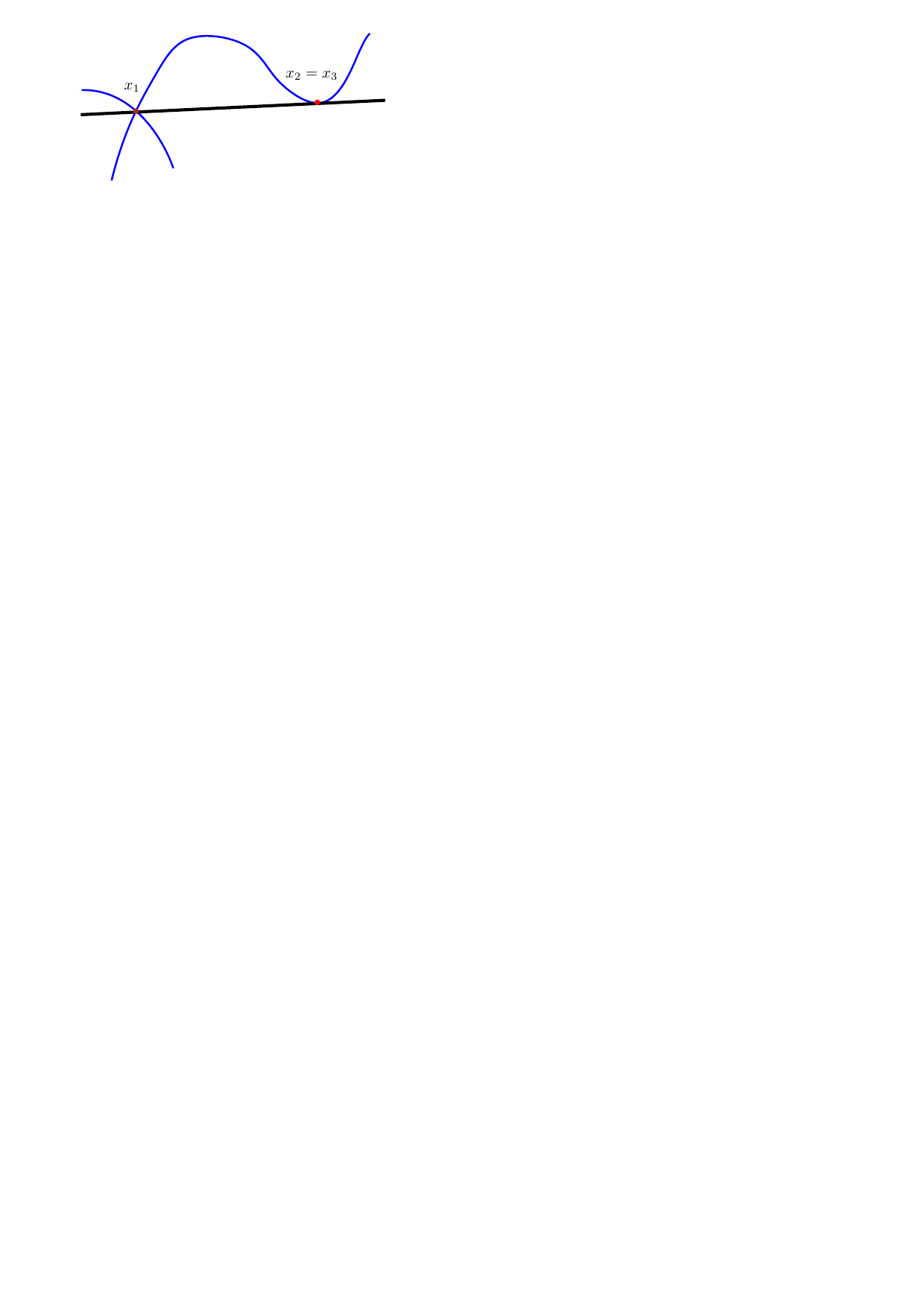}
\end{center}

}

Next impose the condition that the derivative of the curve at $x_2$ in the normal direction is zero. This corresponds to the
curve having a node at $x_2$. However, there is a degenerate contribution to the Euler class, which occurs when $x_1$ and $x_2$
come together, as is seen by the following picture:
\begin{figure}[h!]\centering
\includegraphics[scale = .75]{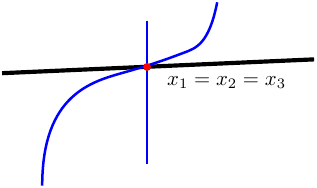}
\end{figure}

In the above picture, one of the branches of the node is tangent to the given line of second order.
Subtracting off this degeneracy allows us to solve the problem.
In particular, we can count the number of pairs consisting of a line and a degree $d$ curve, such that the line
passes through $m$ points, the curve passes through $n$ points with $m+n = \delta_d-2$
(see \eqref{eb}) while the curve has two nodes lying on the line.
When $m = 2$ and $n = \delta_d-4$, it corresponds to case where the curve has two nodes on the same
fixed line. On the other hand,
setting $m = 0$ and $n = \delta_d-2$ corresponds to the case where the two nodes are free.

For tacnodes, we use the fact that they occur
precisely when two nodes collide. Consider the space of curves with two nodes $x_1$ and $x_2$,
both of them lying on a given line. Now impose the condition that $x_1 = x_2$, and
the following object is obtained:
\begin{figure}[h!]\centering
\includegraphics[scale = .75]{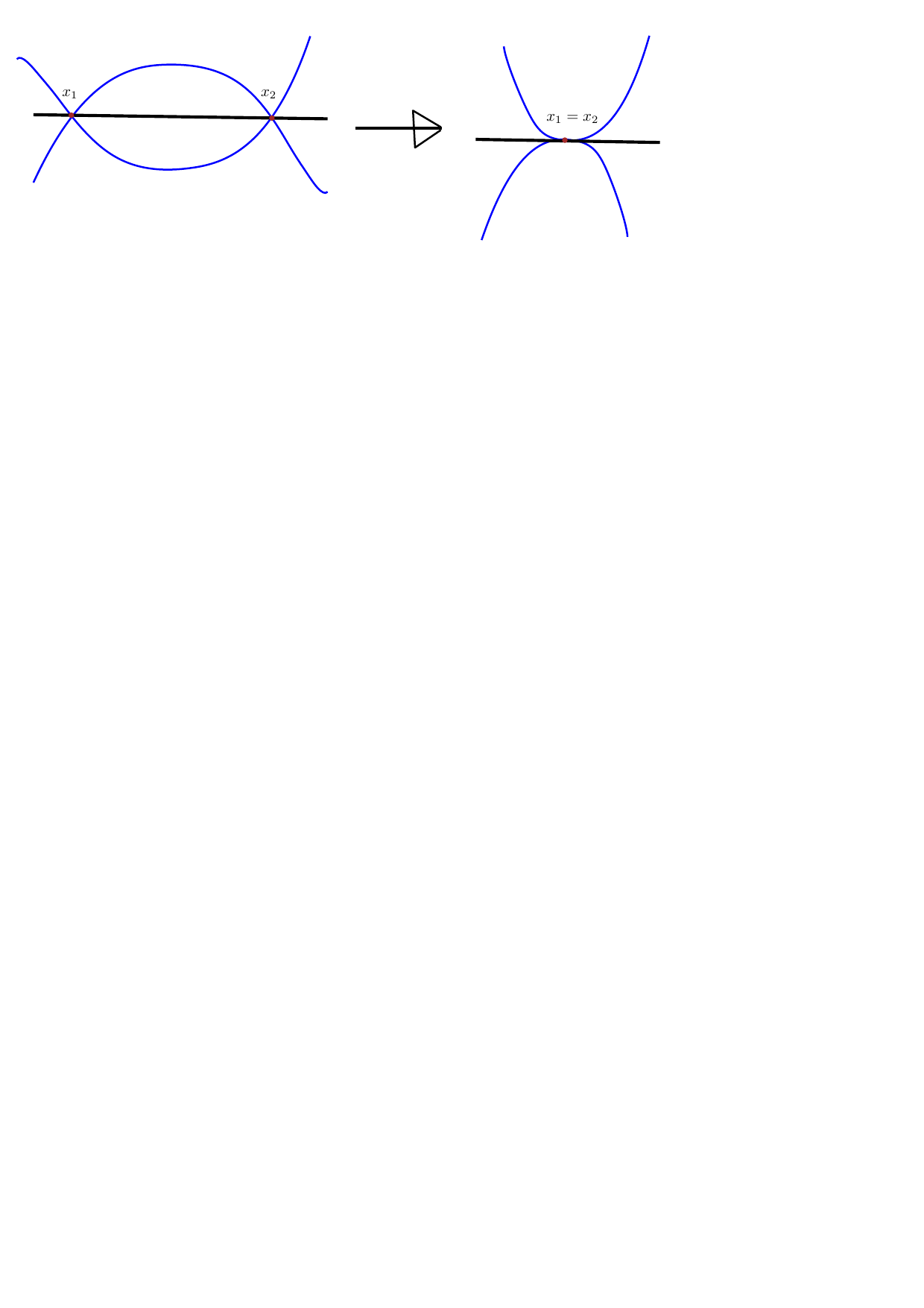}
\caption{Two nodes on the line limiting to a tacnode.} \label{tacnode_pic}
\end{figure}

Next assume that the line passes through $m$ points and the curve passes through $n$ points, where $m+n = \delta_d-3$.
The case where~${m = 0}$ and $n = 3$ gives the number of degree $d$ curves through $\delta_d-3$ generic points
that have a~tacnode.

To summarize, we have been able to solve the following questions about enumerating singular curves:
\begin{itemize}\itemsep=0pt
\item[$\bullet$] Plane curve of degree $d$ having
a node.

\item[$\bullet$] Plane curve of degree $d$ having
two distinct nodes.

\item[$\bullet$] Plane curve of degree $d$ having a tacnode.
\end{itemize}

Let us now implement this idea precisely.
Similar to $\mathsf{A}_1^{\mathsf{F}}$ and $\mathsf{A}_2^{\mathsf{F}}$, define
$\mathsf{A}_3^{\mathsf{F}} \subset \mathsf{M}^1_0$ to be the locus $(H_1, H_d, p)$
such that $H_d$ has a tacnode at $p$.
Furthermore, define $\mathsf{A}_1^{\mathsf{F}} \mathsf{A}_1^{\mathsf{F}} \subset \mathsf{M}^2_0$, to be
the locus $(H_1, H_d, p_1, p_2)$
such that $H_d$ has a node at $p_1$ and $p_2$ and $p_1$ and $p_2$ are distinct.
The following formulas will be proven in this section:
\begin{gather}
\bigl[\mathsf{A}_1^{\mathsf{F}}\bigr]  = \bigl(3d^2-6d+3\bigr) y_d b_1^2 + (3d-3)y_d^2 b_1 + y_d^3, \label{a1_free_cycle} \\
\bigl[\mathsf{A}_3^{\mathsf{F}}\bigr]  = \bigl(50d^2-192d+168\bigr) y_d^3 b_1^2 + (25d-48)y_d^4 b_1 + 5 y_d^5 \qquad \textnormal{and}
\label{A3F_cycle}\\
\bigl[\mathsf{A}_1^{\mathsf{F}} \mathsf{A}_1^{\mathsf{F}}\bigr]  = \bigl(9d^4-36d^3+12d^2+81d-66\bigr)y_d^2 b_1^2 b_2^2 \nonumber \\
\phantom{\bigl[\mathsf{A}_1^{\mathsf{F}} \mathsf{A}_1^{\mathsf{F}}\bigr]  = }{} + \bigl(9d^3-27d^2-d-30\bigr)y_d^3 b_1 b_2^2
+ \bigl(9d^3-27d^2-d-30\bigr) y_d^3 b_1^2 b_2 \nonumber \\
\phantom{\bigl[\mathsf{A}_1^{\mathsf{F}} \mathsf{A}_1^{\mathsf{F}}\bigr]  = }{} + \bigl(3d^2-6d-4\bigr)y_d^4 b_1^2 + \bigl(9d^2-18d+2\bigr)y_d^4 b_1 b_2 +
\bigl(3d^2-6d-4\bigr)y_d^4 b_2^2 \nonumber \\
\phantom{\bigl[\mathsf{A}_1^{\mathsf{F}} \mathsf{A}_1^{\mathsf{F}}\bigr]  = }{} + (3d-3)y_d^5 b_1 + (3d-3)y_d^5 b_2 + y_d^6. \label{a1fa1f_ag87}
\end{gather}

\subsection[Counting 1-nodal curves]{Counting $\boldsymbol{ 1}$-nodal curves}\label{node_new}

We prove \eqref{a1_free_cycle}. Recall that \eqref{a1_line_cycle}
says that the class $\bigl[\mathsf{A}_1^{\mathsf{L}}\bigr]$ can be computed by multiplying
the class $\bigl[\mathsf{A}_1^{\mathsf{F}}\bigr]$ with $(y_d + b_1)$. Note that
knowing $\bigl[\mathsf{A}_1^{\mathsf{L}}\bigr]$ does not -- a priori -- give
$\bigl[\mathsf{A}_1^{\mathsf{F}}\bigr]$. Nevertheless, it will be shown that knowing
$\bigl[\mathsf{A}_1^{\mathsf{L}}\bigr]$ in fact does give the class $\bigl[\mathsf{A}_1^{\mathsf{F}}\bigr]$.

First of all, we note that $\bigl[\mathsf{A}_1^{\mathsf{F}}\bigr]$
is a codimension $3$ class in $\mathsf{M}^1_0$. Hence, it is of the form
\[
\bigl[\mathsf{A}_1^{\mathsf{F}}\bigr]  = \sum_{\substack{i+j= 3, \\ i, j \geq 0, \\
j \leq 2}}C_{ij} y_d^{i} b_1^j.\vspace{-1mm}
\]
There is no term involving $y_1$, because it is the pullback of a class in $\mathcal{D}_d \times X^1$.
We also note that $j$ can not be greater than $2$, since $b_1^3$ is zero.
Hence, it suffices to compute the following three numbers
$C_{12}$, $ C_{21} $ and $C_{30}$.
Next, we note that
\[
C_{12} = \bigl[\mathsf{A}_1^{\mathsf{F}}\bigr] \cdot y_1^2 y_d^{\delta_d-1}, \qquad C_{21} =
\bigl[\mathsf{A}_1^{\mathsf{F}}\bigr] \cdot y_1^2 y_d^{\delta_d-2} b_1 \qquad \textnormal{and}
\qquad C_{30} = \bigl[\mathsf{A}_1^{\mathsf{F}}\bigr] \cdot y_1^2 y_d^{\delta_d-3} b_1^2.
\]
It is rather straightforward that $C_{21}$ is the number of
degree $d$ curves passing through $\delta_d-2$ generic points with a node lying on a line; intersecting with
$y_d^{\delta_d}-2$ makes the curve pass through~${\delta_d-2}$ points and intersecting with $b_1$
makes the nodal point lie on a line (the line represents the class $b_1$).
But that can also be obtained as an intersection number involving the class $\bigl[\mathsf{A}_1^{\mathsf{L}}\bigr]$, namely
\begin{align}
C_{21}&  =  \bigl[\mathsf{A}_1^{\mathsf{L}}\bigr] \cdot y_1^2 y_d^{\delta_d-2}. \label{j7}
\end{align}
Hence, knowing $\bigl[\mathsf{A}_1^{\mathsf{L}}\bigr]$ gives the coefficient $C_{21}$.

Next, observe that $C_{12}$ is the number of degree $d$ curves passing through $\delta_d-1$ generic points with a node.
Hence, one concludes that
\begin{equation}\label{j1}
C_{12} =  \bigl[\mathsf{A}_1^{\mathsf{L}}\bigr] \cdot y_1 y_d^{\delta_d-1}.
\end{equation}
Indeed, intersecting with $y_d^{\delta_d-1}$ makes the nodal curve pass through $\delta_d-1$
points. For each such curve, the nodal point is now fixed. Intersecting with $y_1$ now fixes a unique line.
Hence, the right-hand side of \eqref{j1} gives us $C_{2}$.

Finally, note that $C_{30}$ is the number of degree $d$ curves passing through $\delta_d-3$ generic points with a node
located at a fixed point. A similar argument gives that{\samepage
\begin{align}
C_2&  =  \bigl[\mathsf{A}_1^{\mathsf{L}}\bigr] \cdot y_1^2 b_1 y_d^{\delta_d-3}. \label{j8}
\end{align}
Hence, in order to compute
$\bigl[\mathsf{A}_1^{\mathsf{F}}\bigr]$ it suffices to compute $\bigl[\mathsf{A}_1^{\mathsf{L}}\bigr]$.}

We do the intersection theory on
$\mathsf{M}^1_0$. Think of $\overline{\mathsf{T}}_1$ as a subspace of
$\mathsf{M}^1_0 := \mathcal{D}_1 \times \mathcal{D}_d \times X^1$.
Let~${(H_1, H_d, p) \in \overline{\mathsf{T}}_1}$ and
suppose that the curve $H_d$ is given by the zero set of the polynomial~$f_d$.
Furthermore, suppose that the line is given by the zero set of $f_1$.
This gives us the following short exact sequence
\begin{align}
0 \longrightarrow & \textnormal{Ker} \nabla f_1|_{p} \longrightarrow TX^1|_{p} \longrightarrow
 \gamma_{\mathcal{D}_1}^* \otimes \gamma_{X^1}^* \longrightarrow 0. \label{ses_line_bundle_ag}
\end{align}
Here $\gamma_{\mathcal{D}_1}$ and $\gamma_{X^1}$ denote the tautological line bundles over $\mathcal{D}_1$
and $X^1$ respectively and $\gamma_{\mathcal{D}_1}^*$ and~$\gamma_{X^1}^*$ denote their duals.
The first nontrivial map in equation \eqref{ses_line_bundle_ag} denotes the inclusion map into the tangent space
$T X^1|_{p}$. The second map denotes the vertical derivative
$\nabla f_1|_{p}$. This map is surjective since the point $p$ is not a singular point of the line
$f_1^{-1}(0)$ (all points of a line are smooth). Define the line bundle
$\mathbb{L} \longrightarrow \overline{\mathsf{T}}_1$, whose fiber over each point is
$\textnormal{Ker} \nabla f_1|_{p}$.

We now impose the condition that the derivative of $f_d$ in the normal direction
to the line is zero.
This means that
$
\nabla f_d|_{p}(u)  = 0 $, $\forall  u \in \bigl(T X^1/\mathbb{L}\bigr)$.
Taking the derivative along $u$ induces a~section of the vector bundle
\begin{align}
\mathcal{V} &:= \bigl(TX^1/\mathbb{L}\bigr)^* \otimes \gamma_{\mathcal{D}_d}^* \otimes (\gamma_{X^1}^{*})^{d}. \label{bundle_normal}
\end{align}
Hence,
\begin{align}
\bigl[\mathsf{A}_1^{\mathsf{L}}\bigr] &  =  [\mathsf{T}_1]\cdot (y_d-y_1 + (d-1)b_1).\label{en_a1}
\end{align}
The second term on the right-hand side of \eqref{en_a1} is the Euler class of $\mathcal{V}$, which can
be computed using \eqref{ses_line_bundle_ag}.
Using the results of Section \ref{smooth_tangencies}, all intersection numbers involving the class
$[\mathsf{T}_1]$ can be computed. Hence, \eqref{en_a1}
enables us to compute all intersection numbers involving the class~$\bigl[\mathsf{A}_1^{\mathsf{L}}\bigr]$.
As a result, using \eqref{j7}, \eqref{j1} and \eqref{j8}
one obtains \eqref{a1_free_cycle}.

It remains to prove that the intersection in \eqref{en_a1} is transverse.
But this is precisely the content of the proof of Proposition \ref{prp_smth_mfld_with_PA1r_ag},
with $n = 0$ and $r = 0$.

\subsection[Counting 2-nodal curves]{Counting $\boldsymbol{ 2}$-nodal curves}
\label{binodal_comp}

We now prove \eqref{a1fa1f_ag87}. First of all, note that $\bigl[\mathsf{A}_1^{\mathsf{F}} \mathsf{A}_1^{\mathsf{F}}\bigr]$
is a codimension $6$ class in $\mathsf{M}^2_0$. Hence, it is of the form
\begin{align}
\bigl[\mathsf{A}_1^{\mathsf{F}} \mathsf{A}_1^{\mathsf{F}}\bigr] & = \sum_{\substack{i+j+k= 6, \\ i, j, k \geq 0, \\
j , k \leq 2}}C_{ijk} y_d^{i} b_1^j b_2^k.\label{gijk_defn}
\end{align}
There is no term involving $y_1$, because it is the pullback of a class in $\mathcal{D}_d \times X^1 \times X^2$.
We also note that $j$ or $k$ can not be greater than $2$, since $b_1^3$ and $b_2^3$ are both zero.
We also note that~${C_{ijk} = C_{ikj}}$; this is because the map from
$\mathsf{A}_1^{\mathsf{F}} \mathsf{A}_1^{\mathsf{F}}$ to itself, that permutes the two marked points is a bijection.
Hence, it suffices to compute the following five numbers
$C_{222}$, $ C_{312}$, $ C_{420}$, $ C_{411}$, $ C_{510}$ and $ C_{600}$.
We perform intersection theory on $\mathsf{M}^2_0$. Define
$\mathsf{A}_1^{\mathsf{L}} \mathsf{A}_1^{\mathsf{L}} \subset \mathsf{M}^2_0$ to be the locus~${(H_1, H_d, p_1, p_2)}$
such that curve $H_d$ has a node at the two distinct points $p_1$ and $p_2$. Furthermore, the two points $p_1$
and $p_2$ also lie on the line $H_1$. Let us for the moment assume that we can compute all intersection
numbers involving the class $\bigl[\mathsf{A}_1^{\mathsf{L}} \mathsf{A}_1^{\mathsf{L}}\bigr]$. It will now be shown how to
compute the numbers $C_{ijk}$ from this information. In particular, it will be shown that
\begin{gather}
C_{222} = \bigl[\mathsf{A}_1^{\mathsf{L}} \mathsf{A}_1^{\mathsf{L}}\bigr]\cdot y_d^{\delta_d-2},
\qquad C_{312}= \bigl[\mathsf{A}_1^{\mathsf{L}} \mathsf{A}_1^{\mathsf{L}}\bigr]\cdot y_d^{\delta_d-3}b_1, \qquad
C_{420} = \bigl[\mathsf{A}_1^{\mathsf{L}} \mathsf{A}_1^{\mathsf{L}}\bigr]\cdot y_d^{\delta_d-4}b_1^2, \nonumber \\
C_{411} = \bigl[\mathsf{A}_1^{\mathsf{L}} \mathsf{A}_1^{\mathsf{L}}\bigr]\cdot y_d^{\delta_d-4}b_1 b_2, \qquad
C_{510} = \bigl[\mathsf{A}_1^{\mathsf{L}} \mathsf{A}_1^{\mathsf{L}}\bigr]\cdot y_d^{\delta_d-5}b_1b_2^2
\qquad \text{and}\nonumber\\
C_{600} = \bigl[\mathsf{A}_1^{\mathsf{L}} \mathsf{A}_1^{\mathsf{L}}\bigr]\cdot y_d^{\delta_d-6}b_1^2b_2^2. \label{Cijk}
\end{gather}
We start by justifying the expression for $C_{222}$.
Note that by definition
\smash{$
C_{222}   =  \bigl[\mathsf{A}_1^{\mathsf{F}} \mathsf{A}_1^{\mathsf{F}}\bigr]\cdot y_1^2 y_d^{\delta_d-2}$}.
Hence, $C_{222}$ is equal to the number of degree $d$ curves passing through $\delta_d-2$ points and having two
(ordered) nodal points. This is the same as the right-hand side of the first equation of~\eqref{Cijk}.
Let us see why this is true. Intersecting
$\bigl[\mathsf{A}_1^{\mathsf{L}} \mathsf{A}_1^{\mathsf{L}}\bigr]$
with $y_d^{\delta_d-2}$ corresponds to making the curve pass through
$\delta_d-2$ points. By definition of $\bigl[\mathsf{A}_1^{\mathsf{L}} \mathsf{A}_1^{\mathsf{L}}\bigr]$,
both the nodal points lie on the line. Since there are two nodal points, the
line is now fixed. This proves the first assertion of \eqref{Cijk}.
The remaining five assertions of \eqref{Cijk} can be seen similarly.

Hence, we have shown that to prove \eqref{a1fa1f_ag87}, it suffices to compute all intersection numbers
involving the class $\bigl[\mathsf{A}_1^{\mathsf{L}} \mathsf{A}_1^{\mathsf{L}}\bigr]$. To explain how to compute those
numbers, think of $\mathsf{A}_1^{\mathsf{L}} \mathsf{T}_1$ as a~subspace of $\mathsf{M}^2_0$.
Let \smash{$(H_1, H_d, p_1, p_2) \in \overline{\mathsf{A}_1^{\mathsf{L}} \mathsf{T}}_1$}.
We now impose the condition that the derivative of the polynomial defining the curve $H_d$
in the normal direction to the line $H_1$ vanishes at~$p_2$. Analogous to \eqref{en_a1},
it is tempting to conclude that
\smash{$
\bigl[\mathsf{A}_1^{\mathsf{L}} \mathsf{T}_1\bigr] \cdot (y_d-y_1+ (d-1)b_2 ) =
\bigl[\mathsf{A}_1^{\mathsf{L}} \mathsf{A}_1^{\mathsf{L}}\bigr]$}.
Unfortunately, the above equation is incorrect. This is because when $p_1$ and $p_2$ collide in the closure
we get a~$\mathsf{P}^{(1)}\mathsf{A}_1$, as can be seen intuitively by the following picture:
\begin{center}\includegraphics[scale = .75]{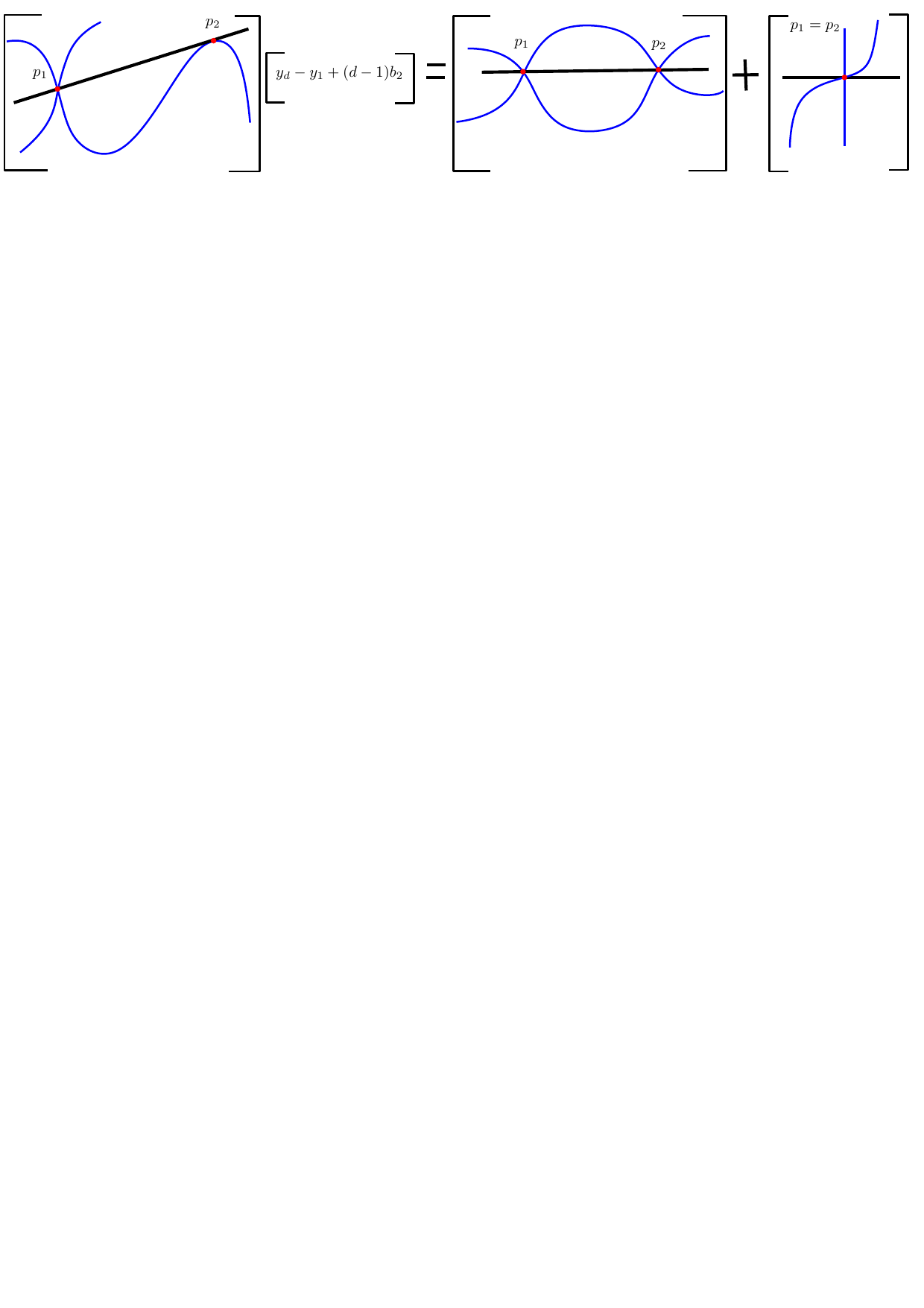}\end{center}

In other words, we are claiming that on $H_*\bigl(\mathsf{M}^2_0; \mathbb{R}\bigr)$, the
following equality of homology classes hold
\begin{align}
\bigl[\mathsf{A}_1^{\mathsf{L}} \mathsf{T}_1\bigr] \cdot (y_d-y_1+ (d-1)b_2 ) & =
\bigl[\mathsf{A}_1^{\mathsf{L}} \mathsf{A}_1^{\mathsf{L}}\bigr] +
\bigl[\mathsf{P}^{(1)}\mathsf{A}_1\bigr]\cdot \bigl[\Delta^{11}\bigr]. \label{A1A1_new_Homology}
\end{align}
Here $\Delta^{11}$ denotes the locus of points $(H_1, H_d, p_1, p_2) \in \mathsf{M}^2_0$
such that $p_1 = p_2$. We first explain how to extract numbers from this equation.
Let $\alpha$ and $\beta$ be classes in $H_*\bigl(\mathsf{M}^2_0; \mathbb{R}\bigr)$
given by
\[\alpha := y_1^r y_d^s b_1^{\varepsilon_1} b_2^{\varepsilon_2} \qquad \textnormal{and} \qquad
\beta := y_1^r y_d^s b_1^{\varepsilon_1+ \varepsilon_2}.\]
Using equation \eqref{A1A1_new_Homology}, we conclude that
\begin{gather}
\bigl[\mathsf{A}_1^{\mathsf{L}} \mathsf{T}_1\bigr] \cdot (y_d-y_1+ (d-1)b_2 ) \cdot \alpha  =
\bigl[\mathsf{A}_1^{\mathsf{L}} \mathsf{A}_1^{\mathsf{L}}\bigr] \cdot \alpha + \bigl[\mathsf{P}^{(1)}\mathsf{A}_1\bigr]\cdot \beta \label{A1A1_new} \\
\qquad\implies \bigl[\mathsf{A}_1^{\mathsf{L}} \mathsf{A}_1^{\mathsf{L}}\bigr] \cdot \alpha  =
\bigl[\mathsf{A}_1^{\mathsf{L}} \mathsf{T}_1\bigr] \cdot (y_d-y_1+ (d-1)b_2 ) \cdot \alpha
-\bigl[\mathsf{P}^{(1)}\mathsf{A}_1\bigr]\cdot \beta. \label{A1A1_new2}
\end{gather}
By the results of Section~\ref{nodal_tang}, we can compute all intersection
numbers involving the classes~${\bigl[\mathsf{A}_1^{\mathsf{L}} \mathsf{T}_1\bigr]}$ and
\smash{$\bigl[\mathsf{P}^{(1)}\mathsf{A}_1\bigr]$}. Hence, using \eqref{A1A1_new2} we can compute all intersection numbers
involving the class~${\bigl[\mathsf{A}_1^{\mathsf{L}} \mathsf{A}_1^{\mathsf{L}}\bigr]}$.
Hence, using \eqref{Cijk} and \eqref{gijk_defn}, we get \eqref{a1fa1f_ag87}.

We now justify \eqref{A1A1_new_Homology}.
First, it will be proved on the set-theoretic level.
We switch to affine space. As before, the line is the $x$-axis.
We have already shown that $\bigl(\mathsf{A}_1^{\mathsf{L}} \overline{\mathsf{T}}_1\bigr)_{\mathsf{Aff}}$ is a smooth
submanifold of $\mathcal{F}_d^{+} \times \mathbb{C}^2$. The argument that the intersection of the cycles on the
open part is transverse is similar to how we have shown the earlier transversality statements. This justifies the
first term on the right-hand side of \eqref{A1A1_new} (on the level of homology).

The second term will now be justified. One needs to figure out what happens when the nodal point $p_1$
and the $\mathsf{T}_1$ point~$p_2$ coincide. The following fact has already been shown: if at $p_1$ and~$p_2$
the first derivatives of the polynomial defining the curve vanish (along the direction of the line),
then when $p_1$ and $p_2$ coincide, the first and second derivatives of the polynomial vanish.
This was shown while proving that $\mathsf{T}_1$ and $\mathsf{T}_1$ collide to form a $\mathsf{T}_3$.
Notice that one does not need $p_1$ and $p_2$ to be smooth points of the curve for the argument to work.

Now note that $p_1$ is a singular point of the curve. Hence, when the two points collide, it will continue to
remain a singular point of the curve. Hence, we conclude the following: when $p_1$ and $p_2$ coincide,
the first two derivatives of the polynomial defining the curve
vanish (along the direction of the line). Furthermore, it is a
singular point of the curve. Note that any element of
$\mathsf{P}^{(1)}\mathsf{A}_1$ satisfies these conditions. We now show that every curve in
$\mathsf{P}^{(1)}\mathsf{A}_1$ actually lies in the closure.

Let $(f, (0, 0))$ belong to $\bigl(\mathsf{P}^{(1)}\mathsf{A}_1\bigr)_{\textsf{Aff}}$.
Note that we are setting the $\mathsf{P}^{(1)}\mathsf{A}_1$ point to be $(0,0)$.
Hence, the Taylor expansion of $f$ is given by
\begin{align*}
f(x,y)& = f_{11} xy + \frac{f_{02}}{2} y^2 + \frac{f_{21}}{2} x^2y + \frac{f_{12}}{2} xy^2
+ \frac{f_{03}}{6} y^3 + \frac{f_{40}}{24} x^4 + \cdots.
\end{align*}
We now try to find a nearby curve $f_t$ that has a nodal point at $(0,0)$ and has a $\mathsf{T}_1$ point at~${(t,0)}$.
The Taylor expansion of $f_t$ is given by
\begin{align*}
f_t(x,y)& = \frac{f_{t_{20}}}{2} x^2 + f_{t_{11}} xy + \frac{f_{t_{02}}}{2} y^2 +
\frac{f_{t_{30}}}{6} x^3 + \frac{f_{t_{21}}}{2} x^2y + \frac{f_{t_{12}}}{2} xy^2
+ \frac{f_{t_{03}}}{6} y^3 + \frac{f_{t_{40}}}{24} x^4+ \cdots.
\end{align*}
We now impose the condition $f_t(t,0)=0$ and $(f_{t})_x(t,0) =0$. We can solve for this
and get an expression for $f_{t_{20}}$ and $f_{t_{30}}$. Moreover, every solution is constructed by this
procedure. Hence, there is only one branch.

It remains to justify the multiplicity of the intersection. Consider the condition of taking the derivative in the
normal direction at the point $(t, 0)$. This is given by $(f_{t})_y(t, 0)$. Written explicitly, it is given by the map
$t  \longrightarrow  f_{t_{11}} t$.
Assuming that $f_{t_{11}} \neq 0$, the order of vanishing of the above function at $t = 0$ is one.
This is a valid assumption, since to compute the intersection multiplicity, we will be intersecting with
generic cycles. This proves \eqref{A1A1_new} on the level of homology and hence, completes the proof of~\eqref{a1fa1f_ag87}.

\subsection[Counting 1-tacnodal curve]{Counting $\boldsymbol{ 1}$-tacnodal curves}
\label{tacnode_counts_bi_node}

We now prove \eqref{A3F_cycle}.
First of all, we note that $\bigl[\mathsf{A}_3^{\mathsf{F}}\bigr]$
is a codimension $5$ class in $\mathsf{M}^1_0$. Hence, it is of the form
\begin{align*}
\bigl[\mathsf{A}_3^{\mathsf{F}}\bigr] & = \sum_{\substack{i+j= 5, \\ i, j \geq 0, \\
j \leq 2}}C_{ij} y_d^{i} b_1^j.
\end{align*}
There is no term involving $y_1$, because it is the pullback of a class in $\mathcal{D}_d \times X^1$.
We also note that~$j$ can not be greater than $2$, since $b_1^3$ is zero.
Hence, it suffices to compute the following three numbers
$C_{32}$, $ C_{41} $ and $C_{50}$.
We make a small digression and discuss the condition of a singularity being a tacnode.
Recall Definition \ref{singularity_defn}:
the zero set of a holomorphic function~${f \colon U \longrightarrow \mathbb{C}}$
has a tacnode at the origin if after a local (analytic) change of coordinates, the function
can be written
as $f(x, y) := y^2 + x^4$. Here $U$ is an open subset of $\mathbb{C}^2$
(with the usual topology of $\mathbb{C}$).
A~tacnode satisfies the condition that the
kernel of the hessian is precisely one dimensional (i.e., the hessian is degenerate, but not
identically zero). We call the kernel of the hessian of~$f$ to be the distinguished
direction of the tacnode. For the tacnode $y^2+ x^4 = 0$, the tangent vector~$\frac{\partial}{\partial x}$ is the distinguished direction.

We now define
$\mathsf{P}\mathsf{A}_3 \subset \mathsf{M}^1_0$ to be the locus $(H_1, H_d, p_1)$
such that curve $H_d$ has a tacnode at $p_1$ and the distinguished
direction of the tacnode is given by the line
$H_1$.
Pictorially, it is denoted by the following picture:
\begin{center}
\includegraphics[scale = .85]{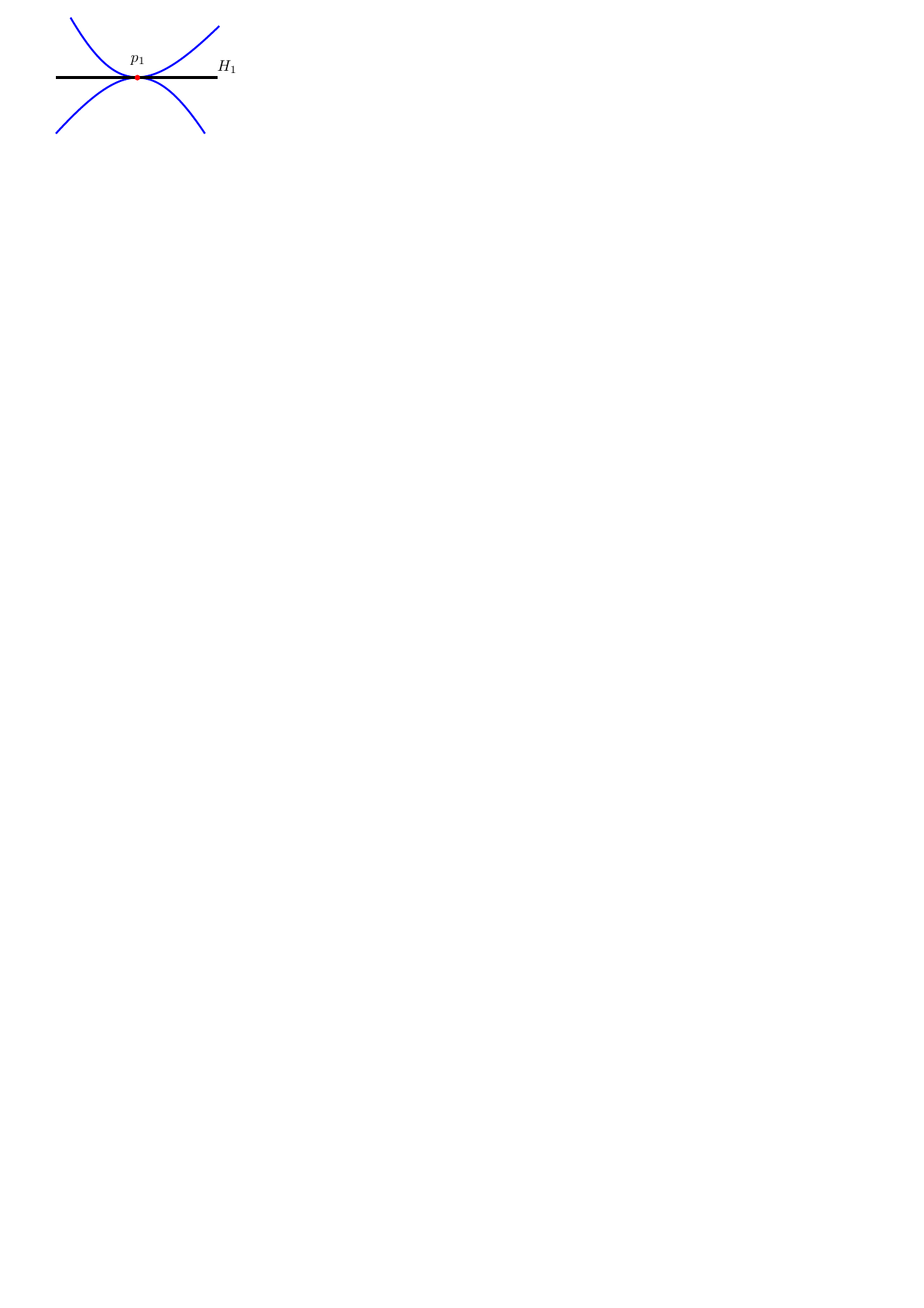}\vspace*{-0.2cm}\end{center}

Notice the difference between the spaces \smash{$\overline{\mathsf{A}^{\mathsf{L}}_3}$}
and $\mathsf{P}\mathsf{A}_3$; the latter lies in the closure of the former.
The space $\mathsf{A}_3^{\mathsf{L}}$ is pictorially represented as follows:
\begin{center}\includegraphics[scale = .85]{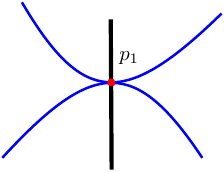}\vspace*{-0.2cm}\end{center}

It will now be shown that computing intersection numbers involving the class
$[\mathsf{P}\mathsf{A}_3]$ enables us to compute the numbers $C_{ij}$.
In particular, it will be shown that
\begin{align}
C_{32}& =[\mathsf{P}\mathsf{A}_3]\cdot y_d^{\delta_d-3}, \qquad
C_{41} = [\mathsf{P}\mathsf{A}_3]\cdot y_d^{\delta_d-4} b_1 \qquad \textnormal{and} \qquad
C_{50} = [\mathsf{P}\mathsf{A}_3]\cdot y_d^{\delta_d-5} b_1^2. \label{pa3_int}
\end{align}
Let us justify the first term of \eqref{pa3_int}, namely the computation of $C_{32}$.
Note that by definition,
\[
C_{32} = \bigl[\mathsf{A}_3^{\mathsf{F}}\bigr]\cdot y_1^2 y_d^{\delta_d-3}.
\]
Hence, $C_{32}$ denotes the number of degree $d$ curves passing through $\delta_d-3$ generic points
and having a tacnode. We now note that intersecting $[\mathsf{P}\mathsf{A}_3]$
with $y_d^{\delta_d-3}$ makes the curve pass through~${\delta_d-3}$ points.
By definition of $\mathsf{P}\mathsf{A}_3$, the line is now fixed because there is a unique
line that passes through the tacnodal point and is the branch of the tacnode. This proves the first equation of~\eqref{pa3_int}.
The remaining two equations follow similarly.

Hence, it has been shown that to prove \eqref{A3F_cycle}, it suffices to compute all intersection numbers
involving the class $[\mathsf{P}\mathsf{A}_3]$. It will now be explained how to compute those
numbers.

Although we are enumerating curves with one singularity, the intersection theory will be done on the
two pointed space $\mathsf{M}^2_0$.
Let \smash{$(H_1, H_d, p_1, p_2) \in \overline{\mathsf{A}_1^{\mathsf{L}} \mathsf{A}^{\mathsf{L}}_1}$}.
Now impose the condition that the two points $p_1$ and $p_2$ come together.
It will be shown shortly that when that happens, we get a~curve in $\mathsf{PA}_3$ (see Figure \ref{tacnode_pic}).
In $H_*\bigl(\mathsf{M}^2_0; \mathbb{R}\bigr)$,
the following equality of homology classes holds:
\begin{align}
\bigl[\mathsf{A}_1^{\mathsf{L}} \mathsf{A}_1^{\mathsf{L}}\bigr] \cdot (b_1+b_2-y_1)
& =  [\mathsf{PA}_3]\cdot \bigl[\Delta^{11}\bigr].
\label{PA3_new_Homology}
\end{align}
We first explain how to extract numbers.
Let $\alpha$ be a class in $H_*\bigl(\mathsf{M}^1_0; \mathbb{R}\bigr)$.
Intersecting both sides of equation \eqref{PA3_new_Homology} with $\alpha$, we get that
\begin{align}
[\mathsf{PA}_3]\cdot \alpha & =
\bigl[\mathsf{A}_1^{\mathsf{L}} \mathsf{A}_1^{\mathsf{L}}\bigr] \cdot (b_1+b_2-y_1) \cdot \alpha .
\label{PA3_new}
\end{align}
Using the results of Section \ref{binodal_comp}, we can compute all intersection numbers involving the class
$\bigl[\mathsf{A}_1^{\mathsf{L}} \mathsf{A}_1^{\mathsf{L}}\bigr]$. Hence, using \eqref{PA3_new},
we can compute all intersection numbers involving the class $[\mathsf{PA}_3]$.

We now justify \eqref{PA3_new_Homology}.
First of all recall the proof of the fact that when a
$\mathsf{T}_1$ point and another $\mathsf{T}_1$ point collide, we get a $\mathsf{T}_3$ point.
The proof in fact shows the following: suppose the first derivative of $f$ vanishes at $p_1$
and $p_2$, then when the two points coincide, the first, second and third derivatives coincide.
The proof does not in any way require the points to be smooth points of the curve. Hence, when two
nodal points lying on a line coincide, the first, second and third derivatives along the line vanish.
Furthermore, the point is a singular point of the curve. Note that any curve in $\mathsf{PA}_3$ satisfies these
conditions.

We now show that every element of $\mathsf{PA}_3$ lies in the closure. We switch to affine space.
Let $f$ be a curve that has a $\mathsf{PA}_3$ point at the origin. Hence, the Taylor expansion of $f$ is given by
\begin{align*}
f(x,y) ={} &f_{11} xy + \frac{f_{02}}{2} y^2 + \frac{f_{21}}{2} x^2 y + \frac{f_{12}}{2} x y^2 + \frac{f_{03}}{6} y^3 \\
& +\frac{f_{40}}{24}x^4+ \frac{f_{31}}{6} x^3 y + \frac{f_{22}}{4} x^2 y^2 + \frac{f_{13}}{6} x y^3 +
\frac{f_{04}}{24} y^4 + \cdots.
\end{align*}
We now try to construct a nearby curve $f_t$ that has a nodal point at $(0, 0)$ and at $(t, 0)$. We also show that
every nearby curve is of the type we have constructed.

The Taylor expansion of $f_t$ is given by
\begin{align*}
f_t(x,y) ={}& \frac{f_{t_{20}}}{2} x^2 + f_{t_{11}} xy + \frac{f_{t_{02}}}{2} y^2 +
\frac{f_{t_{30}}}{6} x^3 + \frac{f_{t_{21}}}{2} x^2 y + \frac{f_{t_{12}}}{2} x y^2 + \frac{f_{t_{03}}}{6} y^3 \\
& + \frac{f_{t_{40}}}{24} x^4 +\frac{f_{t_{31}}}{6} x^3 y + \frac{f_{t_{22}}}{4} x^2 y^2 + \frac{f_{13}}{6} x y^3 +
\frac{f_{t_{04}}}{24} y^4 +
\cdots.
\end{align*}
We now impose the condition $f_t(t, 0) = 0$, $(f_{t})_x(t, 0) = 0$ and $(f_{t})_y(t, 0) = 0$.
Using the equation~${(f_{t})_y(t, 0) = 0}$, we can uniquely solve for $f_{t_{11}}$.
Next, using the equation $(f_{t})_x(t, 0) = 0$, we can uniquely solve for $f_{t_{20}}$.
Plugging these two solutions in the equation
$f_t(t, 0) = 0$, we can uniquely solve for $f_{t_{30}}$.

This gives us a procedure to construct a curve $f_t$, close to $f$,
that has a nodal point at $(0, 0)$ and at $(t, 0)$. Since our solution was unique, this implies that every
nearby curve is of this type, i.e., there is only one branch.

It remains to compute the multiplicity
of the intersection. We are basically setting the $x$-coordinate to be equal to zero. But the
$x$-coordinate is $t$. Hence, the order of vanishing is one (since there is exactly one branch).
This completes the proof of \eqref{PA3_new}.

\section{Counting rational curves}\label{count_st_mp_tang}
Finally, in this section,
we give an alternative approach to
enumerate stable maps with first-order tangency. Before getting into the details, the idea will be outlined.

First of all, it may be recalled that we are counting \textit{maps}, not zero sets of polynomials.
We will be considering the Kontsevich moduli space of maps of rational curves with two marked points, i.e., we will be doing
intersection theory on $\overline{M}_{0,2}\bigl(\mathbb{P}^2, d\bigr)$. Denote an element of
$\overline{M}_{0,2}\bigl(\mathbb{P}^2, d\bigr)$ as~${[u, y_1, y_2]}$. The letter $u$ denotes a
map from a possibly singular genus zero Riemann surface
to~$\mathbb{P}^2$ while $y_1$ and $y_2$ are two distinct points on the domain. The square bracket is there since we are
looking at equivalence classes of such maps. Denote $x_1 := u(y_1)$ and $x_2 := u(y_2)$. Note that~$x_1$ and $x_2$ are points on the
target space \big(i.e., $\mathbb{P}^2$\big).
Now consider the space of maps of rational curves with two marked points and a line, such that the image of the curve evaluated at those
two points intersects the line.
A
pictorial representation of an element of this space is
as follows:
\begin{figure}[h]\centering
\includegraphics[scale = .85]{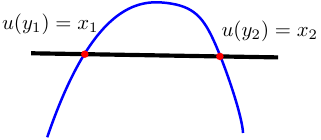}
\caption{Stable maps intersecting a line at two points.} \label{T1_A1L_st}
\end{figure}

On this space, now impose the condition that the points $x_1$ and $x_2$ become equal.
There are two possibilities now that can be pictorially seen as follows:
In the first case the corresponding points on the domain also become equal,
i.e., $y_1 = y_2$. This corresponds to the curve having a~tangency. In the second case, the corresponding points on the
domains are not the same, i.e., $y_1 \neq y_2$. This corresponds to the image of the curve having a self intersection, so the curve has a~node. Hence, imposing the condition $x_1 = x_2$, what we get can be summarized by the following picture:
\begin{center}\includegraphics[scale = .85]{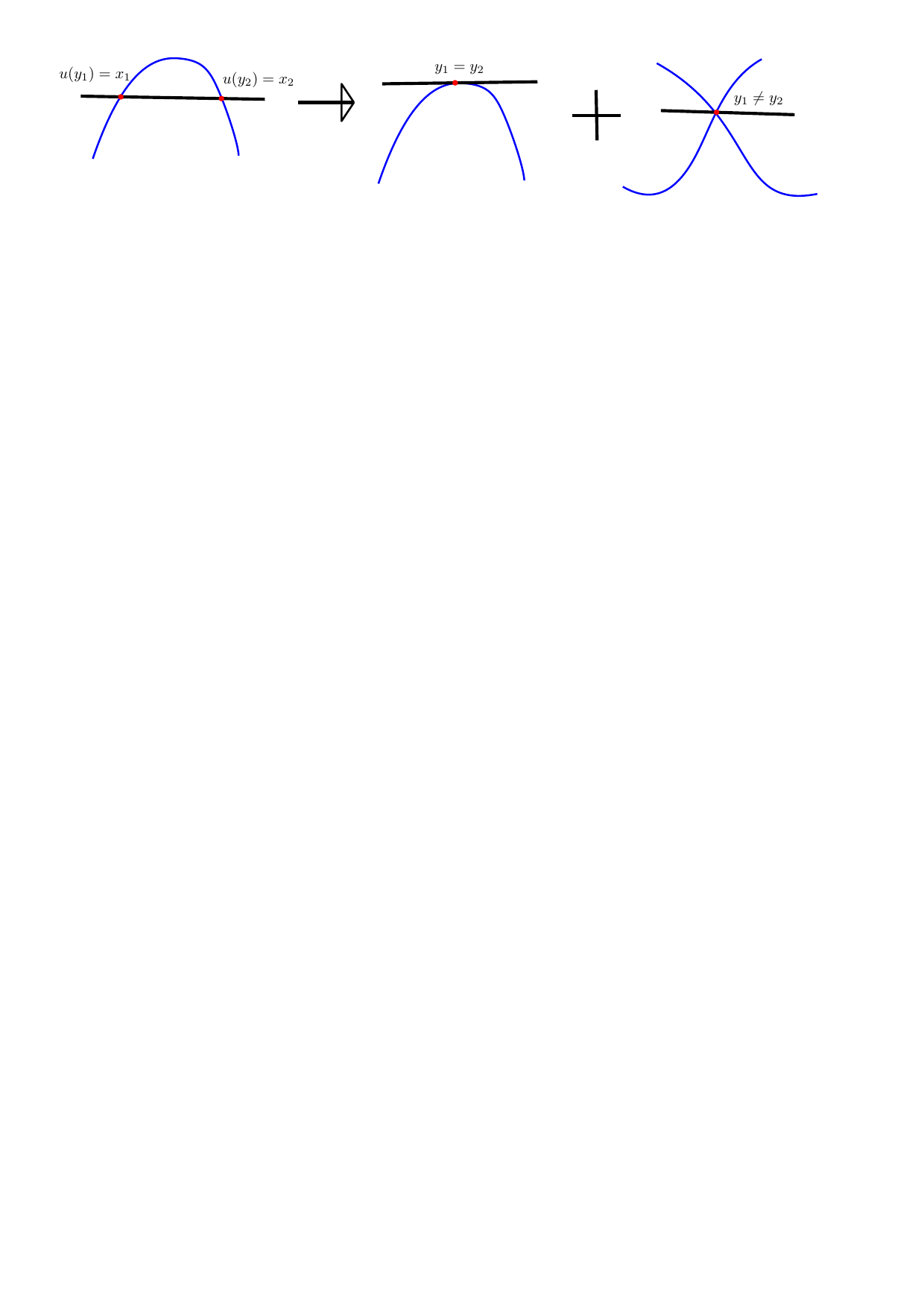}\end{center}

Using the results of \cite{AiM_m_fold_pt},
all intersection numbers involving the second term on the right-hand side of Figure \ref{T1_A1L_st} can be computed.
Hence, we can compute the characteristic number of rational curves tangent to a given line.
Sections \ref{count_st_mp_tang} and \ref{low_deg_st_map} contain a detailed computation along the above line.

Finally, in Section~\ref{Rel_GW_WDVV}, we pursue this idea again by making the points in the
\textit{domain} come together.
This is implemented by extending the idea behind the derivation of Kontsevich's recursion formula.
We choose a suitable subspace of the four pointed moduli space $\overline{M}_{0,4}\bigl(\mathbb{P}^2, d\bigr)$
and intersect it with the pullback of two divisors from $\overline{M}_{0,4}$.
Equating those two intersection numbers, gives a recursive formula for the characteristic number
of rational curves tangent to a~given line.

We now implement these ideas precisely. But first, we make a digression and review how Gathmann
enumerates rational curves tangent to a divisor.

\subsection{A review of Gathmann's approach to count curves with tangencies}
\label{Gathman_generalize}

In his papers \cite{AnGaPhD,AnGa1} and \cite{AnGa2}, Andreas Gathmann gives a systematic approach to solve the following question:
Let $Y$ be a hypersurface inside $\mathbb{P}^n$. What are the characteristic number of rational degree $d$ curves in
$\mathbb{P}^n$ that are tangent to $Y$ at a given point to order $k$? Gathmann successfully solves the above question
for any ample hypersurface and any $k$. He goes on to use this study to compute Gromov--Witten invariants of the quintic threefold.

Consider a special case of Gathmann's result when $Y$ is a line inside $\mathbb{P}^2$ and
ask the following question: How many rational degree $d$ curves are there in $\mathbb{P}^2$, that pass through $3d-2$
generic points and are tangent to a given line?
In this subsection we recapitulate Gathmann's approach to solve this question.
The next subsection describes an alternative approach to the question
based on applying Figure \ref{kk_pic} in the setting of stable maps
(which is also discussed in \cite[pp.~41]{Gath_blow_up} and \cite[pp.~1179--1180]{MK_published}).

Let us now describe Gathmann's idea. The setup is modified in order to solve a slightly
more general question. The question we solve is as follows:
How many pairs -- consisting of a line and a rational degree $d$ curve, passing through $m$ points and $n$
points respectively -- are there such that the line is tangent to the curve and $m+n = 3d+1$?
The special case of $m = 2$ corresponds to the line being fixed.

We start by describing the ambient space.
Recall that $\overline{M}_{0,0}\bigl(\mathbb{P}^2, d\bigr)$ is the compactification of the
Kontsevich moduli space of maps of rational curves (with no marked points). Let $\mathcal{H}$ denote the
divisor that corresponds to the subspace of curves that pass through a generic point. We note that the intersection number
$
\bigl[\overline{M}_{0,0}\bigl(\mathbb{P}^2, d\bigr)\bigr]\cdot \mathcal{H}^m
$
is computable via Kontsevich's recursion formula. For dimensional reasons,
the above number is nonzero only when $m = 3d-1$. On the zero pointed moduli space, this is the only intersection number
that is relevant for our purposes; it is also called a primary Gromov--Witten invariant.

Now consider $\overline{M}_{0,1}\bigl(\mathbb{P}^2, d\bigr)$, the one marked moduli space. As before, we have the
divisor $\mathcal{H}$ which corresponds to the subspace of curves whose image passes through a generic point.
But now, there are two other things as well.
Denote the pullback (via the evaluation map) of the hyperplane class in $\mathbb{P}^2$ by
${\rm ev}^*(b_1)$. Finally, consider
$\mathcal{L}\longrightarrow \overline{M}_{0,1}\bigl(\mathbb{P}^2, d\bigr)$,
the universal tangent bundle, whose fibre
over each point is the tangent space at that marked point. Denote the first Chern class of the dual of this bundle
by $\psi$, i.e.,
$
\psi :=  c_1(\mathcal{L}^*)$.
It is a~standard fact that all the intersection numbers
\begin{align}
\bigl[\overline{M}_{0,1}\bigl(\mathbb{P}^2, d\bigr)\bigr]\cdot \mathcal{H}^m \cdot {\rm ev}^*(b_1)^n \cdot \psi^{\theta} \label{primary_sec_GW}
\end{align}
are computable for any choice of $m$, $n$ and $\theta$.
This can be seen from the paper \cite[p.~311, Proposition 2.2]{Ionel_genus_one}.
When $\theta$ is greater than zero, the above number is also called a descendant Gromov--Witten invariant.

We now explain the geometric idea behind Gathmann's method to enumerate rational curves that are tangent to a fixed line,
and how to modify his method when the line is not fixed but is free to move in a family.
Denote by $\overline{M}_{0,k}\bigl(\mathbb{P}^2, d\bigr)$ the $k$ marked moduli space.
The pullback of the hyperplane classes (via the evaluation map) are denoted by
$
{\rm ev}^{*}(b_1^{n_1}), \dots, {\rm ev}^{*}(b_k^{n_k})$.
Now define $\mathsf{M}_1$ as
\begin{align*}
\mathsf{M}_1& :=  \mathcal{D}_1 \times \overline{M}_{0,1}\bigl(\mathbb{P}^2, d\bigr) \times \mathbb{P}^2_1,
\end{align*}
where $\mathbb{P}^2_i$ denotes a copy of $\mathbb{P}^2$
and $\mathcal{D}_1$ denotes the space of lines in $\mathbb{P}^2$.
The corresponding hyperplane classes are denoted by $a_i$ and $y_1$.

Note that an element of
$\mathsf{M}_1$ consists of a line, a one pointed rational curve (namely an element of
$\overline{M}_{0,1}\bigl(\mathbb{P}^2, d\bigr)$), and a point of $\mathbb{P}^2_1$. The relevant classes that live in
$\mathsf{M}_1$ are
$
y_1$, $\mathcal{H}$, ${\rm ev}^*(b_1)$, $\psi$, and $a_1$.
Since we can compute all the primary and descendant Gromov--Witten invariants
(i.e., the numbers in equation \eqref{primary_sec_GW}), we can compute all the following intersection numbers:
\begin{align}
[\mathsf{M}_1]\cdot \mathcal{H}^m \cdot {\rm ev}^*(b_1^{n_1})\cdot \psi^{\theta}\cdot y_1^{r}\cdot a_1^{s}.
\label{primary_sec_GW_M1}
\end{align}
Now define $(\mathsf{T}_0)_{\textnormal{st}}$ to be the following subspace of
$\mathsf{M}_1$:
\begin{align*}
(\mathsf{T}_0)_{\textnormal{st}}& :=  \{ ([f_1], [u, y_1], x_1) \in \mathsf{M}_1
\mid  u(y_1) = x_1,\,  f_1(x_1) = 0\}.
\end{align*}
\begin{rem}
 Note the following fact: we are typically going to denote the marked point of the domain by the letter $y_i$.
It is not going to cause any confusion with the other place where the letter $y_1$ is used, namely for the hyperplane class
of $\mathcal{D}_1$.
\end{rem}

Returning to the discussion, an element of $(\mathsf{T}_0)_{\textnormal{st}}$
can be pictorially described as follows:
\begin{center}\includegraphics[scale = .85]{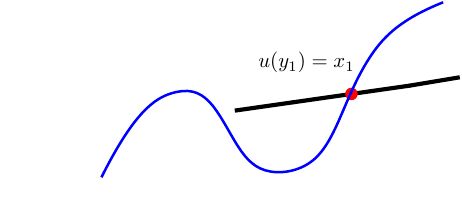}\end{center}

Let us now see how we can go about describing the class
$[(\mathsf{T}_0)_{\textnormal{st}}]$. First of all, note that the condition~${f_1(q_1)=0}$ is same as intersecting
with the class $(y_1+a_1)$ (see equation \eqref{incidence}). It remains to figure out how to express the condition $u(y_1) = q_1$.
Consider the map
\begin{align*}
{\rm ev}\times \textnormal{id}_{\mathbb{P}^2_{1}}\colon\ \mathsf{M}_1 &
\longrightarrow  \mathbb{P}^2 \times \mathbb{P}^2_{1},
  \qquad ([f_1], [u, y_1], x_1) \longmapsto (u(y_1), x_1).
\end{align*}
The condition $u(y_1) = x_1$ is same as intersecting with the pullback of the diagonal, namely
${
({\rm ev}\times \textnormal{id}_{\mathbb{P}^2_{1}})^*(\Delta_{b_1 a_1})}$.
Hence, we conclude that
\begin{align}
[(\mathsf{T}_0)_{\textnormal{st}}] &
 = \bigl({\rm ev}^*\bigl(b_1^2\bigr) + {\rm ev}^*(b_1) a_1 + a_1^2\bigr) \cdot (y_1 + a_1).
\label{T0_st}
\end{align}
Since all the intersection numbers in equation \eqref{primary_sec_GW_M1} are computable,
we conclude from equation~\eqref{T0_st} that
all the intersection numbers
\begin{align}
[(\mathsf{T}_0)_{\textnormal{st}}]\cdot \mathcal{H}^m \cdot {\rm ev}^*(b_1^{n_1})
\cdot \psi^{\theta}\cdot y_1^{r}\cdot a_1^{s} \label{T0_st_int_num}
\end{align}
are computable.

We now define $(\mathsf{T}_1)_{\textnormal{st}}$. It is the subspace of
$(\mathsf{T}_0)_{\textnormal{st}}$, where the curve is tangent to the line at the point $x_1$.
It is pictorially described as follows:
\begin{center}\includegraphics[scale = .9]{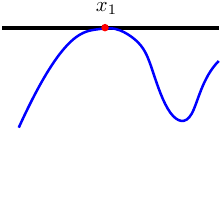}
\end{center}

Next, we explain Gathmann's approach to compute intersection numbers involving the
class $[(\mathsf{T}_1)_{\textnormal{st}}]$.
In the next subsection, we give an alternative approach to compute these intersection numbers.
Gathmann's approach is best summarized by the following equation:
\begin{align}
[(\mathsf{T}_1)_{\textnormal{st}}] & = [(\mathsf{T}_0)_{\textnormal{st}}]\cdot (\psi+y_1+a_1).
\label{T1_st_cycle}
\end{align}
We explain why equation \eqref{T1_st_cycle} is true. We note that for a rational curve $[u, y_1]$ to be tangent to the line, the differential
${\rm d}u|_{y_1}$ should take values in the tangent space of the line.
In other words, the differential ${\rm d}u|_{y_1}$ has to vanish in the normal direction of the line.
This condition is interpreted as the vanishing of a section of an appropriate line bundle.

First of all, consider the line bundle $\mathbb{L} \longrightarrow (\mathsf{T}_0)_{\textnormal{st}}$
whose fibre over each point $([f_1], [u, y_1], x_1)$ is the tangent space of the line $f_1^{-1}(0)$ at the point $x_1$.
This is basically the same line bundle we defined in Section~\ref{node_new}. For the convenience of the
reader, we review the definition,
namely the short exact sequence into which the line bundle fits
\begin{align}
0\longrightarrow & \mathbb{L} \longrightarrow T \mathbb{P}^2_{1}|_{x_1} \longrightarrow \gamma_{\mathcal{D}_1}^*
\otimes \gamma_{\mathbb{P}^2_{1}}^*\longrightarrow 0. \label{ses_line_bundle_ag2}
\end{align}
The condition that $[u, y_1]$ is tangent to the line at $x_1$ (namely that the differential ${\rm d}u|_{y_1}$ vanishes in the
normal direction to the line) can be interpreted as a section of the following line bundle~$
\mathcal{L}^*\otimes {\rm ev}^*\bigl( T \mathbb{P}^2_{1}/\mathbb{L}\bigr)$.
Using equation \eqref{ses_line_bundle_ag2}, we conclude that the Euler class of the above line bundle is equal to~${(\psi+ y_1 + a_1)}$
which gives us equation \eqref{T1_st_cycle}.

Note that using equations \eqref{T1_st_cycle}, \eqref{T0_st}, and the fact that all the intersection numbers of equation
\eqref{primary_sec_GW_M1} are computable, we conclude that all the following intersection numbers
$
[(\mathsf{T}_1)_{\textnormal{st}}]\cdot \mathcal{H}^m \cdot {\rm ev}^*(b_1^{n_1})
\cdot \psi^{\theta}\cdot y_1^{r}\cdot a_1^{s}
$
are computable.

This idea can be pushed further to enumerate rational curves with higher order tangency.
However, starting from second-order tangency, there is a non-trivial geometric phenomenon that occurs.
In the closure of curves tangent to a given line, there are
bubble maps. By a bubble map, we refer to the following stable maps $u \in \overline{M}_{0,n}\bigl(\mathbb{P}^2, d\bigr)$ of degree $d$ (see \cite[Section 5.1]{McSa}):
\begin{itemize}\itemsep=0pt
\item[$\bullet$] The domain has finite number of components $C_i$
each isomorphic to $\mathbb{P}^1$ and they are meeting each other at nodes.
\item[$\bullet$] Let $u_i = u|_{C_i}$ of degree $d_i$. Then the image of $u_i$ and $u_j$
intersect at a common point in~$\mathbb{P}^2$, which is the image of the nodal point and the total degree of $u$
is $d = \sum_{i} d_i$. If $u_i$ is constant for some $i$, then it is referred as ghost bubble or ghost component.
\end{itemize}
These bubble maps are in the zero locus
of the section that computes the second derivative. Hence, one has to analyse a degenerate locus and subtract off from the
Euler class. Gathmann does that successfully in his papers \cite{AnGaPhD,AnGa1,AnGa2}
and is able to enumerate rational curves tangent to any order.

\subsection{A new method to count rational curves with first-order tangency}\label{Rational-curve-with-a-choice-of-a-node}

In this subsection, we give an alternate approach to enumerate rational curves with tangencies based on
Figure \ref{kk_pic}. This idea has been discussed in
\cite{Gath_blow_up} (see p.~41) and \cite[pp.~1179--1180]{MK_published}.
The idea presented has an obvious difficulty (namely the formation of self intersection).
We overcome that difficulty using the result of our paper \cite{AiM_m_fold_pt}.

We continue with the setup of Section \ref{Gathman_generalize}.
We perform intersection theory on $\mathsf{M}_2$, which is defined as
\begin{align*}
\mathsf{M}_2 & := \mathcal{D}_1 \times \overline{M}_{0,2}\bigl(\mathbb{P}^2, d\bigr) \times \mathbb{P}^2_{1}\times \mathbb{P}^2_{2}.
\end{align*}
The relevant classes that live in
$\mathsf{M}_2$ are
$
y_1$, $  \mathcal{H}$, $  {\rm ev}^*(b_1)$, ${\rm ev}^*(b_2)$, $a_1$ and $a_1$.
Define the following projection map
$
\pi_{21} \colon \mathsf{M}_2 \longrightarrow \mathsf{M}_1$,given by $\pi_{21}([f_1], [u, y_1,
 y_2], x_1, x_2) := ([f_1], [u, y_1], x_1)$.
Basically, the map forgets the second marked point on
$\overline{M}_{0,2}\bigl(\mathbb{P}^2, d\bigr)$. Furthermore, it forgets the second factor
$\mathbb{P}^2_{2}$. The cycles that have been defined in the one pointed moduli space $\mathsf{M}_1$
can be pulled back to the two pointed moduli space $\mathsf{M}_2$ via this projection map.

Now define the following divisor in the two pointed moduli space
$\overline{M}_{0,2}\bigl(\mathbb{P}^2, d\bigr)$. We denote this divisor by the symbol
$
[ y_1 = y_2]$.
The above symbol allows us to guess the divisor we will be talking about: this divisor is represented by
the space of stable maps with two marked points, where the two marked points have coincided. It can be pictorially
represented as follows:
\begin{center}\includegraphics[scale = .9]{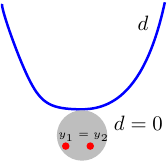}\end{center}

Usually, in the literature, it is denoted by $D(\{\varnothing\},{2};d,0)$ (see \cite{FuPa}).
Note that this space can be identified with the one marked moduli space
$\overline{M}_{0,1}\bigl(\mathbb{P}^2, d\bigr)$.
We now explain how to intersect with this divisor.
It is a standard fact that all the primary intersection numbers
\begin{align}
\bigl[\overline{M}_{0,2}\bigl(\mathbb{P}^2, d\bigr)\bigr]\cdot \mathcal{H}^{m}\cdot {\rm ev}^*(b_1^{n_1}) \cdot
{\rm ev}^*(b_2^{n_2}) \label{primary_GW_M2}
\end{align}
are computable.

Now define $\bigl(\mathsf{T}_0 \mathsf{T}_0\bigr)_{\textnormal{st}}$ to be the following subspace of $\mathsf{M}_2$:
it consists of a line and a rational curve and two marked points, where the two marked points lie on the line (see Figure~\ref{T1_A1L_st}).

It will be shown that the following equality of classes holds in $\mathsf{M}_2$:
\begin{gather}
\bigl({\rm ev}^*\bigl(b_1^2\bigr) + {\rm ev}^*(b_1) a_1 + a_1^2\bigr)\cdot (y_1 + a_1)
 \cdot \bigl({\rm ev}^*\bigl(b_2^2\bigr) + {\rm ev}^*(b_2) a_2 + a_2^2\bigr) \cdot (y_1 + a_2)\nonumber \\
\qquad = [(\mathsf{T}_0 \mathsf{T}_0)_{\textnormal{st}} ] +
[(\mathsf{T}_0)_{\textsf{st}}]. \label{T0_T0_st_map}
\end{gather}
A slight abuse of notation is being made here. Equation
\eqref{T0_T0_st_map} is an equality of classes in $\mathsf{M}_2$.
The second term on the right-hand side, namely $[(\mathsf{T}_0)_{\textsf{st}}]$ is a
class in
$\mathsf{M}_1$. However, we can identify that as a class in $\mathsf{M}_2$ via the following process: we attach a ghost bubble
at the marked point and add one more marked point on the ghost component.
This is the intended meaning of the second term of the right-hand side.

The reason for why equation \eqref{T0_T0_st_map} is true is similar to why
Theorem \ref{theorem_for_many_Tks} is true (for the special case $n = 1$ and $k_1 = 0$).
It has been shown in Section~\ref{Gathman_generalize} that intersecting with
\[\bigl({\rm ev}^*\bigl(b_1^2\bigr) + {\rm ev}^*(b_1) a_1 + a_1^2\bigr)\cdot (y_1 + a_1)\]
is imposing the condition that the first marked point lies on the line. Similarly, intersecting with
\[\bigl({\rm ev}^*\bigl(b_2^2\bigr) + {\rm ev}^*(b_2) a_2 + a_2^2\bigr) \cdot (y_1 + a_2)\]
imposes the condition that the second marked point lies on a line. Intersecting with both of them gives us the condition that
both the marked points lie on the line. However, the two marked points can coincide. That gives us the degenerate locus
corresponding to the second term on the right-hand side of equation \eqref{T0_T0_st_map}.
Rewriting equation \eqref{T0_T0_st_map}, one concludes that
\begin{align}
[(\mathsf{T}_0 \mathsf{T}_0)_{\textnormal{st}} ] = {}&
\bigl({\rm ev}^*\bigl(b_1^2\bigr) + {\rm ev}^*(b_1) a_1 + a_1^2\bigr)\cdot (y_1 + a_1) \nonumber \\
& \times \bigl({\rm ev}^*\bigl(b_2^2\bigr) + {\rm ev}^*(b_2) a_2 + a_2^2\bigr) \cdot (y_1 + a_2)
- [(\mathsf{T}_0)_{\textsf{st}}]. \label{T0_T0_st_map_ag}
\end{align}
Using equation \eqref{T0_T0_st_map_ag},
the fact that all the intersection numbers of equation \eqref{primary_GW_M2} are computable
and the fact that all the intersection numbers in equation \eqref{T0_st_int_num} are computable,
one concludes that all the intersection numbers
\begin{align}
[(\mathsf{T}_0 \mathsf{T}_0)_{\textnormal{st}} ]\cdot
\mathcal{H}^{m}\cdot {\rm ev}^*(b_1^{n_1}) \cdot
{\rm ev}^*(b_2^{n_2}) \cdot y_1^r \cdot a_1^{n_1} \cdot a_2^{n_2} \label{T0_T0_st_map_int_numbers}
\end{align}
are computable.

Next, define $\bigl(\mathsf{A}_1\bigr)_{\textnormal{st}}$ to be the following subspace of
the two pointed moduli space
$M_{0,2}\bigl(\mathbb{P}^2, d\bigr)$
\begin{align*}
(\mathsf{A}_1)_{\textnormal{st}} &:= \big\{ [u, y_1, y_2] \in
M_{0,2}\bigl(\mathbb{P}^2, d\bigr)\mid u(y_1)=u(y_2)\big\}.
\end{align*}
Also, define
$\bigl(\mathsf{A}_1^{\mathsf{L}}\bigr)_{\textnormal{st}}$ to be the following subspace of $\mathsf{M}_2$
\begin{align*}
\bigl(\mathsf{A}_1^{\mathsf{L}}\bigr)_{\textnormal{st}} :={}& \{ ([f_1], [u, y_1, y_2], x_1, x_2) \in
\mathsf{M}_2 \mid [u, y_1, y_2] \in (\mathsf{A}_1)_{\textnormal{st}}, \\
& u(y_1) = x_1,\, u(y_2) = x_2, \,f_1(x_1) = 0,\, f_1(x_2) = 0\}.
\end{align*}
Pictorially, the space $\bigl(\mathsf{A}_1^{\mathsf{L}}\bigr)_{\textnormal{st}}$ can be represented as follows:
\begin{center}\includegraphics[scale = .85]{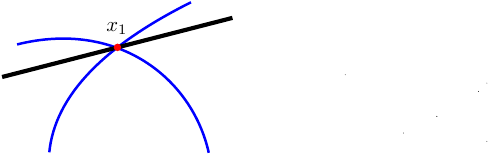}\end{center}

We now explain how to compute the characteristic number of rational curves with first-order tangency.
On the space \smash{$\overline{\bigl(\mathsf{T}_0 \mathsf{T}_0\bigr)}_{\textnormal{st}}$},
impose the additional condition that $x_1 = x_2$.
By the collision lemma,
this is same as intersecting with $(a_1 + a_2-y_1)$. Hence,
\begin{align}
[(\mathsf{T}_0 \mathsf{T}_0)_{\textnormal{st}} ] \cdot (a_1+a_2-y_1)
& = [(\mathsf{T}_1)_{\textsf{st}}]+
2 \bigl[\bigl(\mathsf{A}_1^{\mathsf{L}}\bigr)_{\textnormal{st}}\bigr]. \label{A1L_st_cycle}
\end{align}
Again, we are making an abuse of notation here. Equation \eqref{A1L_st_cycle} is an equality of
classes in~$\mathsf{M}_2$.
The first term on the right-hand side, namely $[(\mathsf{T}_1)_{\textsf{st}}]$ is a
class in
$\mathsf{M}_1$. However, we can identify that as a class in $\mathsf{M}_2$
via the following process: we attach a ghost bubble
at the marked point and add one more marked point on the ghost component.
This is the intended meaning of the first term of the right-hand side.

We now see why \eqref{A1L_st_cycle} is true. We note that when we impose the condition $x_1 = x_2$, there are two possibilities.
The first possibility is that the corresponding points in the domain are also the same and hence it corresponds to a point of tangency
as given by the following picture:
\begin{center}\includegraphics[scale = .85]{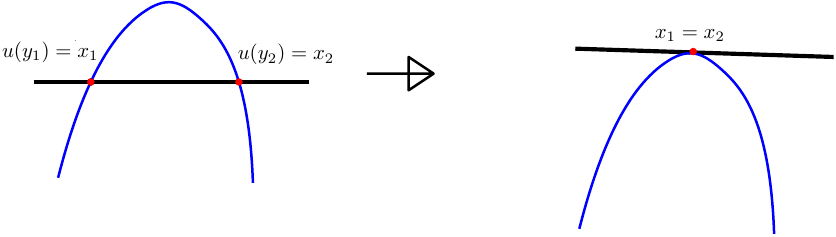}\end{center}

This corresponds to the first term of the right-hand side of equation \eqref{A1L_st_cycle}.
But there is a second possibility. The points in the domain can be distinct. This is a point of self
intersection, which is a nodal point. This corresponds to the second term in the right-hand side of
\eqref{A1L_st_cycle}. The intersection occurs with a multiplicity of $2$, because
if a line intersects a nodal point of a~curve, then it contributes $2$ to the intersection.
Denote $\mu$ as follows
$
\mu := \mathcal{H}^{m}\cdot {\rm ev}^*(b_1^{n_1}) \cdot y_1^r \cdot a_1^{n_1}$.
Rewriting equation \eqref{A1L_st_cycle} and intersecting with $\mu$, we conclude that
\begin{align}
[(\mathsf{T}_1)_{\textsf{st}}]\cdot \mu & =
[(\mathsf{T}_0 \mathsf{T}_0)_{\textnormal{st}} ] \cdot (a_1+a_2-y_1) \cdot \mu-
2 \bigl[\bigl(\mathsf{A}_1^{\mathsf{L}}\bigr)_{\textnormal{st}}\bigr] \cdot \mu. \label{A1L_st_cycle_ag}
\end{align}
Since all the intersection numbers in \eqref{T0_T0_st_map_int_numbers} are computable,
we conclude that
the first term on the right-hand side of \eqref{A1L_st_cycle_ag} is computable.
It will be shown in a moment that using the result of \cite{AiM_m_fold_pt}, one can also compute the second term.
Hence, the right-hand side of \eqref{A1L_st_cycle_ag} is computable for any $\mu$.
This gives us an alternative way to compute the characteristic number of rational curves tangent to a given line to first order.
The reader can refer to Section~\ref{low_deg_st_map}, where we have tabulated explicit numbers.

This idea can actually be pursued further. Suppose we wish to enumerate curves with second-order tangency.
Then as expected, we define the space $(\mathsf{T}_1 \mathsf{T}_0)_{\mathsf{st}}$. We then impose the
condition of the two points coming together. This results in two things. First of all we have curves with second-order tangency. We also encounter curves with a node lying on the line, such that the line is one of the
branches of the node. Hence, we need to compute that number. In order to do that, we define the space
$\bigl(\mathsf{A}_1^{\mathsf{L}} \mathsf{T}_0\bigr)_{\mathsf{st}}$ and require the points to come together. The two things
we get are, a nodal curve, with the line being one of the branches of the node. The second thing we get is a curve
with a triple point. To compute the latter, we can use the result of \cite{AiM_m_fold_pt}. We have
actually carried out this entire computation. The details of the computation are available on request.
However, the analysis of the degenerate locus and its contribution to the intersection is more non-trivial.
We hope to pursue this approach in a more thorough way in future
and figure out how to enumerate rational curves with $k$-th order tangency.

It remains to be shown how to compute the second term on the right-hand side of \eqref{A1L_st_cycle_ag}.
First of all, we note that intersecting $\bigl[\bigl(\mathsf{A}_1^{\mathsf{L}}\bigr)_{\textnormal{st}}\bigr]$ with
$a_1$ or ${\rm ev}^*(b_1)$ is the same thing.
Hence, it suffices to give a procedure to find intersection numbers with
$\mathcal{H}^{m} \cdot y_1^r \cdot a_1^{n}$.
Let $\alpha := (d, 2)$ and let~${N_{\alpha}(m,n)}$
denote the intersection number defined in \cite[pp.~5]{AiM_m_fold_pt}.
Here $m$ and $n$ are non-negative integers.
From the definition of $N_{\alpha}(m,n)$ and the proof of the
correspondence result, \cite[pp.~13--16]{AiM_m_fold_pt},
we conclude that
\begin{subnumcases}{\hspace*{-43mm}\big[\bigl(\mathsf{A}_1^{\mathsf{L}}\bigr)_{\textnormal{st}}\big] \cdot \mathcal{H}^{m} \cdot y_1^r \cdot a_1^{n} = }
0 & \textnormal{if}  $r = 0$, \label{base_case_rec1_pt5} \\
\textnormal{$N_{\alpha}(m, n)$} & \textnormal{if}  $r = 1$, \label{base_case_rec1_pt1} \\
\textnormal{$N_{\alpha}(m, 1)$} & \textnormal{if}  $r = 2$  ~\textnormal{and} $n = 0$, \label{base_case_rec1_pt4} \\
\textnormal{$N_{\alpha}(m, 2)$} & \textnormal{if}  $r = 2$  ~\textnormal{and} $n = 1$, \label{base_case_rec1_pt2}\\
0 & \textnormal{if}  $r = 2$  ~\textnormal{and} $n \geq 2$, \label{base_case_rec1_pt3} \\
 0 & \textnormal{if}  $r \geq 3$. \label{base_case_rec1_pt6}
\end{subnumcases}
Let us see why this is true. The first case, \eqref{base_case_rec1_pt5} follows from dimensional reasons.

We justify the next case, \eqref{base_case_rec1_pt1}.
From the proof of the correspondence result, \cite[pp.~13--16]{AiM_m_fold_pt},
we conclude that $N_{\alpha}(m, n)$ denotes the number of rational curves
passing through $m$ generic points and
with a choice of a node lying at the intersection of
$n$ generic lines. Once such a~nodal curve is fixed, a unique line passes through one point and the nodal point.
That precisely corresponds to the left-hand side of the equation.

Next, we justify \eqref{base_case_rec1_pt4}. We start by unwinding the left-hand side.
Intersecting with $y_1^2$ corresponds to fixing a line. Now we intersect with $\mathcal{H}^m$.
This is equal to the number of rational curves passing through $m$ generic points and
with a choice of a node lying on a line. That is precisely equal to $N_{\alpha}(m, 1)$; again this follows from the
proof of the correspondence result.

Next, let us justify \eqref{base_case_rec1_pt2}. Again, we begin by unwinding the left-hand side.
Intersecting with~$y_1^2$ corresponds to fixing a line. We recall that the nodal point lies on this line.
Intersecting with~$a_1$ is restricting the nodal point to lie on another line; in other words, we are restricting the
nodal point to lie on a fixed point. Now we intersect with $\mathcal{H}^m$.
This is equal to the number of rational curves passing through $m$ generic points and
with a choice of a node lying on a fixed point.
That is precisely equal to $N_{\alpha}(m, 2)$; again this follows from the
proof of the correspondence result. The next case, \eqref{base_case_rec1_pt3} follows immediately,
since intersecting with $y_1^2$ restricts the nodal point to lie on a line and intersecting with
$a_1^2$ is restricting the nodal point to lie on a point; since a line and a point do not intersect, this number is clearly zero.

Finally, \eqref{base_case_rec1_pt6} follows immediately since $y_1^3$ is zero.

\subsection{Another method to count rational curves with first-order tangency}\label{Rel_GW_WDVV}

We conclude this paper by illustrating that there is yet another way to exploit Figure \ref{kk_pic} to
enumerate stable maps with tangencies. Recall that in the previous subsection, we made the two points
in the \textit{images} come together. This results in two things; curves that are tangent to the line and
curves that have a point of self intersection lying on the line. The former occurs when the points in the
domain also coincide, while the latter occurs when the points in the domain remain distinct.

What if we could simply make the points in the domain come together? In this final part of the paper, we
do precisely that. The idea will be implemented by extending Kontsevich's idea to enumerate rational curves, i.e.,
the WDVV equation. The details are as follows.

Recall that Kontsevich's recursion formula gives an answer to the following question:
How many degree $d$ rational curves are there in $\mathbb{P}^2$ that pass through $3d-1$ generic points?
Denote this number by $n_d$. Also denote by $\mathsf{N}_d^{\mathsf{T}_1}$ the
number of rational degree $d$ curves in $\mathbb{P}^2$ that pass through $3d-2$ and that are tangent to a given line.
A recursive formula for $\mathsf{N}_d^{\mathsf{T}_1}$ will be obtained by using the WDVV equation.
Note that for simplicity, the line will be kept fixed; the more general case of keeping the line variable
can be worked out with very little extra effort (the only issue would be notational).

Consider $\overline{M}_{0,4}\bigl(\mathbb{P}^2, d\bigr)$, the moduli space of genus zero
stable maps, with $4$ marked points.
Let $\mathsf{L}$ be a fixed line inside $\mathbb{P}^2$.
Define $\mathsf{X}_d$ to be the following subspace of $M_{0,4}\bigl(\mathbb{P}^2, d\bigr)$:
it is the subspace of rational degree $d$ curves where
the image of the first two marked points lie on two distinct points of the line $\mathsf{L}$.
It is pictorially represented as follows:
\begin{center}\includegraphics[scale = .85]{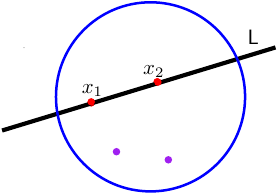}\end{center}

We denote $\overline{\mathsf{X}}_d$
to be the closure of $\mathsf{X}_d$ inside $\overline{M}_{0,4}\bigl(\mathbb{CP}^2, d\bigr)$.

Following the idea behind Kontsevich's recursion formula, we consider the forgetful map
\smash{$
\pi \colon \overline{M}_{0,4}\bigl(\mathbb{P}^2, d\bigr) \longrightarrow  \overline{M}_{0,4}$}.
Let
$[(ij|kl)]$
denote the divisor in $\overline{M}_{0,4}$ corresponding to the wedge of two spheres and where the
marked points $(y_i, y_j)$ lie on the one sphere and $(y_k, y_l)$ lie on the other sphere.
In $\overline{M}_{0,4}\bigl(\mathbb{P}^2, d\bigr)$,
define the class $\mathcal{Z}$ as
$
\mathcal{Z}:= {\rm ev}_3^*(\textnormal{pt})\cdot {\rm ev}_4^*(\textnormal{pt})\cdot \mathcal{H}^{3d-4}$.
Since~$\overline{M}_{0,4}$ is isomorphic to $\mathbb{P}^1$, any two points determine the same divisor.
Hence, $[(12|34)]$ is equal to~$[(13|24)]$ as divisors. Hence,
\begin{align}
\bigl[\overline{\mathsf{X}}_d\bigr]\cdot [\pi^*(12|34)] \cdot \mathcal{Z} & =
\bigl[\overline{\mathsf{X}}_d\bigr]\cdot [\pi^*(13|24)] \cdot \mathcal{Z}. \label{T1_WDVV}
\end{align}
Note that the left-hand side and right-hand side of \eqref{T1_WDVV}
denote intersection numbers in $\overline{M}_{0,4}\bigl(\mathbb{P}^2, d\bigr)$.
We now unravel both the sides of the equation and get a recursive formula.
Before that, let us recapitulate a terminology about bubble maps.
We define a bubble map to be of type $(d_1, d_2)$ if it is the following object:
a holomorphic map from a wedge of two spheres ($\mathbb{P}^1$), such that the map
is of degree $d_1$ on the first component and is of degree $d_2$ on the second component.
Holomorphic here means that restricted to each of the components, the map is holomorphic.

We unwind the left-hand side of \eqref{T1_WDVV} by looking at it geometrically.
First, consider the possibility when the two points $y_3$ and $y_4$ come together.
That results in a bubble map of type~$(d_1, d_2)$ such that
\begin{itemize}\itemsep=0pt
 \item The marked points $y_1$ and $y_2$ lie on the $d_1$ component.
 \item The marked points $y_3$ and $y_3$ lie on the $d_2$ component.
 \item The image of the marked points $y_1$ and $y_2$ intersect the line $\mathsf{L}$.
 \item The image of $y_3$ and $y_4$ coincide with two generic points (this corresponds to intersection with
 ${\rm ev}_3^*(\textnormal{pt})$ and ${\rm ev}_4^*(\textnormal{pt})$).
 \item The entire configuration passes through $3d-4$ points \big(this corresponds to intersecting with $\mathcal{H}^{3d-4}$\big).
\end{itemize}
The total number of such bubble maps is given by
\begin{align*}
\sum_{d_1+d_2 = d} \binom{3d-4}{3d_1-1} n_{d_1} n_{d_2} (d_1 d_2) d_1 (d_1 -1).
\end{align*}
The factor $d_1 d_2$ is to encode the number of choices for the bubble point. The factor $d_1(d_1-1)$
is there to encode the number of choices where the image of the points $y_1$ and then $y_2$ can lie.

We now see what happens when $y_1$ and $y_2$ come together.
What happens is that we get a~rational curve, tangent to the given line. The point of tangency is
given by the image of~$y_1$ (which is also the same as the image of~$y_2$).
Furthermore, the image of $y_3$ and $y_4$ coincide with two generic points
(which corresponds to intersection with
 ${\rm ev}_3^*(\textnormal{pt})$ and ${\rm ev}_4^*(\textnormal{pt})$)
and the entire configuration passes through $3d-4$ points
\big(which corresponds to intersecting with $\mathcal{H}^{3d-4}$\big). The total number of such objects is
precisely equal to the number of rational curves passing through~${3d-2}$ generic points, tangent to a given line.
That is precisely equal to $\mathsf{N}_d^{\mathsf{T}_1}$.
Hence, the left-hand side of \eqref{T1_WDVV} is
\begin{align}
\mathsf{N}_d^{\mathsf{T}_1}+\sum_{d_1+d_2 = d} \binom{3d-4}{3d_1-1} n_{d_1} n_{d_2} (d_1 d_2) d_1(d_1 -1). \label{1234_lhs}
\end{align}
Let us now unwind what is the
right-hand side of \eqref{T1_WDVV} by looking at it geometrically.
When the two points $y_1$ and $y_3$ come together (or $y_2$ and $y_4$ come together), a bubble
map of type~$(d_1, d_2)$ is obtained such that:
\begin{itemize}\itemsep=0pt
 \item The marked points $y_1$ and $y_3$ lie on the $d_1$ component.
 \item The marked points $y_2$ and $y_4$ lie on the $d_2$ component.
 \item The image of the marked point $y_1$ intersects the line $\mathsf{L}$.
\item The image of the marked point $y_2$ intersects the line $\mathsf{L}$.
\item The image of $y_3$ coincides with a generic points (this corresponds to intersection with ${\rm ev}_3^*(\textnormal{pt})$).
 \item The image of $y_4$ coincides with a generic points (this corresponds to intersection with
 ${\rm ev}_4^*(\textnormal{pt})$).
 \item The entire configuration passes through $3d-4$ points \big(this corresponds to intersecting with $\mathcal{H}^{3d-4}$\big).
\end{itemize}
The total number of such configurations is given by
\begin{align}
\sum_{d_1+d_2 = d} \binom{3d-4}{3d_1-2} n_{d_1} n_{d_2} (d_1 d_2) (d_1) (d_2). \label{1324_rhs}
\end{align}
This is precisely the right-hand side of \eqref{T1_WDVV}. Equating \eqref{1234_lhs} and \eqref{1324_rhs},
we conclude that
\begin{align}
\mathsf{N}_d^{\mathsf{T}_1}& =
\sum_{d_1+d_2 = d} \biggl(\binom{3d-4}{3d_1-2} d_1 d_2-\binom{3d-4}{3d_1-1}d_1(d_1-1)\biggr)
n_{d_1} n_{d_2} d_1 d_2.\label{NT1_WDVV_formula}
\end{align}

\section{Low degree checks}\label{verification}

In this section, we perform various non-trivial low degree checks which will be compared with the existing known results.
We remind the reader that the bound we impose on $d$ to obtain our results are sufficient
conditions for our formulas to be valid; they are not necessary.
Many of the numbers we have computed are obtained by applying the formula where the value of $d$
is lower than what is required to apply our theorem. Nevertheless, we display these values
and point out to the reader that the numbers we obtain at the end agree with the expected values
(obtained by other means).

\subsection[Counting smooth curves with tangencies: confirmation with Caporaso-Harris]{Counting smooth curves with tangencies:\\ confirmation with Caporaso--Harris}

In Section \ref{smooth_curves_enum}, it was shown that the following numbers
\smash{$
[\mathsf{T}_{k_1} \dots \mathsf{T}_{k_n}] \cdot y_1^2 y_d^{\delta_d-k}$},
can be computed. We remind the reader that $k:= k_1+\dots + k_n$.
A few values are displayed in the following table:
\begin{center}\renewcommand{\arraystretch}{1.2}
\begin{tabular}{|c|c|c|c|c|c|c|}
\hline
$d$ &$4$ &$5$ & $6$ & $7$ & $8$ & $9$ \\
\hline
$\mu$ &$y_1^2 y_4^{10}$ &$y_1^2 y_5^{16}$ & $y_1^2 y_6^{23}$ & $y_1^2 y_7^{31}$ & $y_1^2 y_8^{40}$ & $y_1^2 y_9^{50}$ \\
\hline
$\frac{1}{2}\bigl[\mathsf{T}_{1} \mathsf{T}_{1} \mathsf{T}_{2}\bigr] \cdot \mu$
& $0$ & $0$ & $0$ & $36$ & $144$ & $360$ \\
\hline
\end{tabular}
\captionof{table}{\textnormal{ }}
\label{IRCAA_tab}
\end{center}
Note that we have divided by a factor of $2$, because in the computation of
$\bigl[\mathsf{T}_{1} \mathsf{T}_{1} \mathsf{T}_{2}\bigr] \cdot \mu$ all the tangency points are ordered.
It is more natural to consider the two $\mathsf{T}_1$ points as unordered.

The corresponding nonzero numbers obtained from the Caporaso--Harris
by setting
\[\delta := 0, \qquad \alpha := (0) \qquad \textnormal{and} \qquad \beta := (i, 2, 1), \qquad \forall i = 0, 1, 2.\]
are as follows:
\begin{center}\renewcommand{\arraystretch}{1.2}
\begin{tabular}{|c|c|c|c|c|c|c|}
\hline
$d$ & $7$ & $8$ & $9$ \\
\hline
$N^{d,\delta}(\alpha, \beta)$
 & $36$ & $144$ & $360$ \\
\hline
\end{tabular}
\captionof{table}{\textnormal{ }}
\label{tab_CH}
\end{center}
We are assuming that the reader is familiar with the notation developed by
Caporaso--Harris in their paper \cite{CH}; we have followed that notation
in Table \ref{tab_CH}. Notice that the values in the last row of Tables \ref{IRCAA_tab} and \ref{tab_CH}
are in agreement.

Next, the value of the following numbers
\smash{$
[\mathsf{T}_{k_1} \dots \mathsf{T}_{k_n}] \cdot y_1^2 y_d^{\delta_d-k-1} a_1$},
will be displayed for certain values. This corresponds to fixing the line (intersecting with $y_1^2$)
and fixing the location of the first tangency point (intersecting with $a_1$). The remaining tangency points
are free. The values obtained by
using the theorems of Section \ref{smooth_curves_enum} are
\begin{center}\renewcommand{\arraystretch}{1.2}
\begin{tabular}{|c|c|c|c|c|c|c|}
\hline
$d$ & $7$ & $8$ & $9$ \\
\hline
$\mu$ & $y_1^2 y_7^{30}a_1$ & $y_1^2 y_8^{39}a_1$ & $y_1^2 y_9^{49}a_1$ \\
\hline
$\bigl[\mathsf{T}_{1} \mathsf{T}_{1} \mathsf{T}_{2}\bigr] \cdot \mu$ & $12$ & $36$ & $72$ \\
\hline
\end{tabular}
\captionof{table}{\textnormal{ }}\label{IRCAA_tab2}
\end{center}

Note that in this case, there is no significance of dividing by $2$, because the two $\mathsf{T}_1$ points
are different; the first $\mathsf{T}_1$ point lies on a fixed point, while the second $\mathsf{T}_1$ is free.

The corresponding numbers from Caporaso--Harris
by setting
\[\delta:= 0, \qquad \alpha:= (0,1) \qquad \textnormal{and} \qquad \beta:=(i,1,1),\qquad \forall i=0,1,2,\]
are as follows:
\begin{center}\renewcommand{\arraystretch}{1.2}
\begin{tabular}{|c|c|c|c|c|c|c|}
\hline
$d$ & $7$ & $8$ & $9$ \\
\hline
$N^{d,\delta}(\alpha, \beta)$ & $12$ & $36$ & $72$ \\
\hline
\end{tabular}
\captionof{table}{\textnormal{ }} \label{tab_CH2}
\end{center}
The values in the last row of Tables \ref{IRCAA_tab2} and \ref{tab_CH2}
are in agreement.

\subsection[Counting one nodal curves with tangencies: confirmation with Caporaso-Harris]{Counting one nodal curves with tangencies:\\ confirmation with Caporaso--Harris}

Using the theorems of Section \ref{one_nodal_tang_comp}, the following numbers
\smash{$
\bigl[\mathsf{A}_1^{\mathsf{F}}\mathsf{T}_{k_1} \dots \mathsf{T}_{k_n}\bigr] \cdot y_1^2 y_d^{\delta_d-k-1}$}.
can be computed. A few values are tabulated below:
\begin{center}\renewcommand{\arraystretch}{1.2}
\begin{tabular}{|c|c|c|c|c|c|c|}
\hline
$d$ & $7$ & $8$ \\
\hline
$\mu$ & $y_1^2 y_7^{30}$ & $y_1^2 y_8^{39}$ \\
\hline
$\frac{1}{2}\bigl[\mathsf{A}_1^{\mathsf{F}}\mathsf{T}_{1} \mathsf{T}_{1} \mathsf{T}_{2}\bigr] \cdot \mu$
& $3420$ & $19404$ \\
\hline
\end{tabular}
\captionof{table}{\textnormal{ }} \label{IRCAA_tab3}
\end{center}
The corresponding numbers from Caporaso--Harris
by setting
\begin{gather*}
\delta:=1, \qquad \alpha:= (0),  \qquad \beta:=(0,2,1) \quad \textnormal{and} \\
\delta:=1, \qquad \alpha:= (0),  \qquad \beta:=(1,2,1)
\end{gather*}
are as follows:{\samepage
\begin{center}\renewcommand{\arraystretch}{1.2}
\begin{tabular}{|c|c|c|c|c|c|c|}
\hline
$d$ & $7$ & $8$ \\
\hline
$N^{d,\delta}(\alpha, \beta)$ & $3420$ & $19404$ \\
\hline
\end{tabular}
\captionof{table}{\textnormal{ }} \label{tab_CH3}
\end{center}
The values in the last row of Tables \ref{IRCAA_tab3} and \ref{tab_CH3}
are in agreement.}

Next, the value of the following numbers
\smash{$
\bigl[\mathsf{A}_1^{\mathsf{F}}\mathsf{T}_{k_1} \dots \mathsf{T}_{k_n}\bigr] \cdot y_1^2 y_d^{\delta_d-k-2} a_1$},
will be displayed for certain values. This corresponds to fixing the line (intersecting with $y_1^2$)
and fixing the location of the first tangency point (intersecting with $a_1$). The remaining tangency points
are free.
The values obtained by
using the theorems of Section \ref{smooth_curves_enum} are
\begin{center}\renewcommand{\arraystretch}{1.2}
\begin{tabular}{|c|c|c|c|c|c|c|}
\hline
$d$ & $8$ \\
\hline
$\mu$ & $y_1^2 y_8^{38}a_1$ \\
\hline
$\bigl[\mathsf{A}_1^{\mathsf{F}}\mathsf{T}_{1} \mathsf{T}_{1} \mathsf{T}_{2}\bigr] \cdot \mu$ & $4912$ \\
\hline
\end{tabular}
\captionof{table}{\textnormal{ }} \label{IRCAA_tab4}
\end{center}
The corresponding numbers from Caporaso--Harris
by setting
\[\delta:=1, \qquad \alpha:= (0,1) \qquad \textnormal{and} \qquad \beta:=(1,1,1)\]
are as follows:
\begin{center}\renewcommand{\arraystretch}{1.2}
\begin{tabular}{|c|c|c|c|c|c|c|}
\hline
$d$ & $8$ \\
\hline
$N^{d,\delta}(\alpha, \beta)$
 & $4912$ \\
\hline
\end{tabular}
\captionof{table}{\textnormal{ }} \label{tab_CH4}
\end{center}
The values
in the last row of Tables \ref{IRCAA_tab4} and \ref{tab_CH4}
are in agreement.

\subsection[Counting one cuspidal cubics with tangencies: confirmation with Ernstr\"om-Kennedy]{Counting one cuspidal cubics with tangencies:\\ confirmation with Ernstr\"om--Kennedy}

Using the theorems of Section \ref{Cuspidal_tang}, one obtains that for $d = 3$,
\smash{$
\bigl[\mathsf{A}_2^{\mathsf{F}} \mathsf{T}_1\bigr]\cdot y_1^2 y_3^{6}   =  60$}.
This number is equal to the number of rational cuspidal cubics passing through
$6$ generic points that
is tangent to a given line. This is in agreement with
the answer obtained by Ernstr{\"o}m and Kennedy in \cite{ken}.

\subsection{Counting stable maps with tangencies: confirmation with Gathmann}\label{low_deg_st_map}

Using the results of Section \ref{Rational-curve-with-a-choice-of-a-node}, the
following numbers
$
[(\mathsf{T}_1)_{\textnormal{st}}]\cdot y_1^2 \cdot \mathcal{H}^{3d-2-n} a_1^{n}
$
can be computed. A few numbers are tabulated in the following two tables:
\begin{center}\renewcommand{\arraystretch}{1.2}
\begin{tabular}{|c|c|c|c|c|c|c|}
\hline
$d$ & $3$ & $4$ & $5$ & $6$ & $7$ & $8$ \\
\hline
$\mu$ & $y_1^2 \mathcal{H}^7$ & $y_1^2 \mathcal{H}^{10}$ & $y_1^2 \mathcal{H}^{13}$ & $y_1^2 \mathcal{H}^{16}$ &
$y_1^2 \mathcal{H}^{19}$ & $y_1^2 \mathcal{H}^{22}$ \\
\hline
$\bigl[(\mathsf{T}_1)_{\textnormal{st}}\bigr] \cdot \mu$ & $36$ & $2184$ & $335792$ & $106976160$ & $61739450304$
& $58749399019136$ \\
\hline
\end{tabular}
\captionof{table}{\textnormal{ }} \label{IRCAA_tab5}
\end{center}
\begin{center}\renewcommand{\arraystretch}{1.2}
\begin{tabular}{|c|c|c|c|c|c|c|}
\hline
$d$ & $3$ & $4$ & $5$ & $6$ & $7$ & $8$ \\
\hline
$\mu$ & $y_1^2 \mathcal{H}^6 a_1$ & $y_1^2 \mathcal{H}^{9}a_1$ & $y_1^2 \mathcal{H}^{12}a_1$ & $y_1^2 \mathcal{H}^{15}a_1$ &
$y_1^2 \mathcal{H}^{18}a_1$ & $y_1^2 \mathcal{H}^{21}a_1$ \\
\hline
$\bigl[(\mathsf{T}_1)_{\textnormal{st}}\bigr] \cdot \mu$ & $10$ & $428$ & $51040$ & $13300176$ & $6498076192$
& $5362556317120$ \\
\hline
\end{tabular}
\captionof{table}{\textnormal{ }} \label{IRCAA_tab6}
\end{center}
These are all in agreement with the numbers computed by Gathmann's
program GROWI
(that implements the formulas in \cite{AnGaPhD,AnGa1,AnGa2}).

Finally, Gathmann's approach is extend in Section~\ref{Gathman_generalize} where the line can be varied.
In particular, the following intersection numbers can be computed
\smash{$
 [(\mathsf{T}_1)_{\textnormal{st}} ]\cdot y_1^r \cdot \mathcal{H}^{3d-n-r}\cdot a_1^{n}$}.
A~few numbers are tabulated in the following two tables:
\begin{center}\renewcommand{\arraystretch}{1.2}
\begin{tabular}{|c|c|c|c|c|c|c|}
\hline
$d$ & $3$ & $4$ & $5$ & $6$ & $7$ & $8$ \\
\hline
$\mu$ & $y_1 \mathcal{H}^8$ & $y_1 \mathcal{H}^{11}$ & $y_1 \mathcal{H}^{14}$ & $y_1 \mathcal{H}^{17}$ &
$y_1\mathcal{H}^{20}$ & $y_1 \mathcal{H}^{23}$ \\
\hline
$\bigl[(\mathsf{T}_1)_{\textnormal{st}}\bigr] \cdot \mu$ & $48$ & $3720$ & $698432$ & $263129760$ & $175401698304$
& $189360514383488$ \\
\hline
\end{tabular}
\captionof{table}{\textnormal{ }} \label{IRCAA_tab7}
\end{center}
These values are all in agreement with the alternative approach given in Section~\ref{Rational-curve-with-a-choice-of-a-node}
to compute these intersection numbers.

Finally, for $d=3$ one can compute (using the theorems of Section \ref{one_nodal_tang_comp}) that
\smash{$
\bigl[\mathsf{A}_1^{\mathsf{F}} \mathsf{T}_1\bigr]\cdot y_1 y_3^{8}  = 48$}.
This number is in agreement with the first number tabulated in Table \ref{IRCAA_tab7}.

Finally, we tabulate the values of $\mathsf{N}_d^{\mathsf{T}_1}$ using \eqref{NT1_WDVV_formula}:
\begin{center}\renewcommand{\arraystretch}{1.2}
\begin{tabular}{|c|c|c|c|c|c|c|c|}
\hline
$d$& $3$ & $4$ & $5$ & $6$ & $7$ & $8$\\
\hline
$\mathsf{N}_d^{\mathsf{T}_1}$ & $36$ & $2184$ & $335792$ & $106976160$ & $61739450304$
& $58749399019136$\\
\hline
\end{tabular}
\captionof{table}{\textnormal{ }} \label{tab_NT1_WDVV}
\end{center}
These are all in agreement with the numbers computed by Gathmann's
program GROWI.

\subsection*{Acknowledgements}

We are very grateful to the referees for giving us constructive
and detailed comments on the earlier version of the manuscript.
We are grateful to Chitrabhanu Chaudhuri for several useful discussions related to this paper. We also thank Soumya Pal for
writing a \textsc{Python} program to implement Caporaso--Harris formula for verification.
 The first author is partially supported by a J.C.~Bose Fellowship (JBR/2023/000003). The second author is
funded by the Deutsche Forschungsgemeinschaft (DFG, German Research
Foundation) under Germany's Excellence Strategy – The Berlin Mathematics
Research Center MATH+ (EXC-2046/1, project ID: 390685689).
The fourth author would like to acknowledge the support of the Department of Atomic Energy, Government of India, under project no. RTI4001.

\pdfbookmark[1]{References}{ref}
\LastPageEnding

\end{document}